\let\oldbibliography\thebibliography
\renewcommand{\thebibliography}[1]{\oldbibliography{#1}\setlength{\itemsep}{0pt}}
\numberwithin{equation}{section}
\newtheorem{theorem}{Theorem}[section]
\newtheorem{lemma}[theorem]{Lemma}
\newtheorem{remark}[theorem]{Remark}
\newtheorem{definition}[theorem]{Definition}
\newcommand{\R}{\mathbb R}
\newcommand{\Sp}{\mathbb S}
\begin{document}
\title{\bf A priori estimates versus arbitrarily large solutions for fractional semi-linear elliptic equations with critical Sobolev exponent}
\author{Xusheng Du,  \;  Hui Yang}
\maketitle

\begin{center}
\begin{minipage}{130mm}
\begin{center}{\bf Abstract}\end{center}
We study positive solutions to the fractional semi-linear elliptic equation
$$
(- \Delta)^\sigma u = K(x) u^\frac{n + 2 \sigma}{n - 2 \sigma} ~~~~~~ \textmd{in} ~ B_2 \setminus \{ 0 \} 
$$
with an isolated singularity at the origin, where $K$ is a positive function on $B_2$, the punctured ball $B_2 \setminus \{ 0 \} \subset \R^n$ with $n \geq 2$, $\sigma \in (0, 1)$, and $(- \Delta)^\sigma$ is the fractional Laplacian. In lower dimensions, we show that, for any $K \in C^1 (B_2)$, a positive solution $u$ always satisfies that $u(x) \leq C |x|^{ - (n - 2 \sigma)/2 }$ near the origin. In contrast, we construct positive functions $K \in C^1 (B_2)$ in higher dimensions such that a positive solution $u$ could be arbitrarily large near the origin. In particular, these results also apply to the prescribed boundary mean curvature equations on $\mathbb{B}^{n+1}$.

\vskip0.10in

\noindent {\it Keywords:} fractional elliptic equations, boundary mean curvature equations, local estimates, large singular solutions

\vskip0.10in

\noindent {\it Mathematics Subject Classification (2020):} 35R11; 35B09; 35B40 
\end{minipage}
\end{center}

\vskip0.20in

\section{Introduction}

In this paper, we are interested in the singular positive solutions to the fractional semi-linear elliptic equation
\begin{equation}\label{ukb2}
(- \Delta)^\sigma u = K(x) u^\frac{n + 2 \sigma}{n - 2 \sigma} ~~ \textmd{in} ~ B_2 \setminus \{ 0 \}, ~~~~ u > 0 ~~ \textmd{in} ~ \R^n \setminus \{ 0 \},
\end{equation}
where $K$ is a positive continuous function on $B_2$, the punctured ball $B_2 \setminus \{ 0 \} \subset \R^n$ with $n \geq 2$, $\sigma \in (0, 1)$, and $(- \Delta)^\sigma$ is the fractional Laplacian defined as
\begin{equation}\label{FL}
(- \Delta)^\sigma u(x) = C_{n, \sigma} {\rm P.V.} \int_{\R^n} \frac{u(x) - u(y)}{ |x - y|^{n + 2 \sigma} } dy
\end{equation}
with $C_{n, \sigma} = \frac{ 2^{2 \sigma} \sigma \Gamma \left( \frac{n + 2 \sigma}{2} \right) }{ \pi^\frac{n}{2} \Gamma (1 - \sigma) }$ and the gamma function $\Gamma$. This equation with the critical Sobolev exponent arises in the study of the fractional Yamabe problem \cite{CG-11,GMS,GQ} and fractional Nirenberg problem \cite{JLX-14, JLX-15}. More precisely, every solution $u$ of \eqref{ukb2} induces a metric $g : = u^{4/(n - 2\sigma)} |dx|^2$ conformal to the flat metric on $\R^n$ such that $K(x)$ is the fractional $Q$-curvature \cite{CG-11} of the new metric $g$. An interesting question is the following: If a solution $u(x)$ of \eqref{ukb2} has a non-removable singularity at $\{ 0 \}$, how does it tend to infinity as $x$ approaches the origin?

This question in the Laplacian case $\sigma = 1$ was initially studied by Caffarelli, Gidas and Spruck in \cite{CGS} when $K$ is identically a positive constant. They proved that every positive solution $u$ is asymptotically radially symmetric and has a precise asymptotic behavior near the isolated singularity $0$. In particular, their result implies that $u$ satisfies the following local estimate near $0$ 
\begin{equation}\label{R=CaGs}
u(x) \leq C |x|^{ - \frac{n - 2}{2} }. 
\end{equation}
We may also see the work of Korevaar-Mazzeo-Pacard-Schoen \cite{KMPS} for a different proof in this classical case. When $K$ is a non-constant positive function, Chen and Lin \cite{CL-97,Lin-00} established \eqref{R=CaGs} for positive solutions to \eqref{ukb2} in the case $\sigma = 1$ under certain flatness conditions at critical points of $K$ by using the method of moving planes. Later, Taliaferro and Zhang \cite{TZ-06,Zhang-02} further explored the flatness conditions on $K$ such that any positive solution of \eqref{ukb2} with $\sigma = 1$ satisfies the local estimate \eqref{R=CaGs} via the moving sphere method.

When $\sigma \in (0, 1)$ and $K$ is identically a positive constant, Caffarelli, Jin, Sire and Xiong \cite{CJSX} proved the following local estimate 
\begin{equation}\label{R=CaGs098}
u(x) \leq C |x|^{ - \frac{n - 2 \sigma}{2} }
\end{equation}
for positive solutions of \eqref{ukb2} near the singularity $0$ relying on the extension formulations of fractional Laplacians established by Caffarelli and Silvestre \cite{CS-07}.   Based on this estimate, they also showed that every solution $u$ of \eqref{ukb2} is asymptotically radially symmetric. Further, it is natural to consider the case where $K$ is a non-constant function and ask that under what assumptions on $K$ every singular solution $u$ of \eqref{ukb2} satisfies the estimate \eqref{R=CaGs098} near the origin. In a recent paper \cite{JY}, Jin and Yang established local estimates for higher order conformal $Q$-curvature equation by studying corresponding integral equation which, in particular, in the case $\sigma \in (0, 1)$ is closely related to the fractional equation \eqref{ukb2}. However, as pointed out in Remark 1.8 there, the integral equation in the case $\sigma \in (0, 1)$ encounters a difficulty due to the more singular properties of an integral kernel, and it was not covered in \cite{JY}. The first goal of this paper is to derive the local estimate \eqref{R=CaGs098} for the fractional equation \eqref{ukb2} in lower dimensions.

We study \eqref{ukb2} via the well-known extension formulations for fractional Laplacians in \cite{CS-07}, through which we can consider a degenerate but local elliptic equation with a Neumann boundary condition in one dimension higher. To be more precisely, we first introduce some notations. We use capital letters, such as $X = (x, t) \in \R^n \times \R$,  to denote points in $\R^{n + 1}$. We denote $\mathcal{B}_R $ as the open ball in $\R^{n + 1}$ with radius $R$ and center $0$, $\mathcal{B}_R^+$ as the upper half ball $\mathcal{B}_R \cap \R_+^{n + 1}$, and $B_R $ as the open ball in $\R^n$ with radius $R$ and center $0$. We also denote $\partial' \mathcal{B}_R^+ $ as the flat part of $\partial \mathcal{B}_R^+$ which is the ball $B_R$ in $\R^n$. Then instead of \eqref{ukb2} we study 
\begin{equation}\label{UKB2}
\left\{
\aligned
{\rm div} (t^{1 - 2 \sigma} \nabla U) & = 0 & \textmd{in} & ~ \mathcal{B}_2^+, \\
\frac{\partial U}{\partial \nu^\sigma} (x, 0) & = K(x) U(x, 0)^\frac{n + 2 \sigma}{n - 2 \sigma} ~~~~~~ & \textmd{on} & ~ \partial' \mathcal{B}_2^+ \setminus \{ 0 \}, 
\endaligned
\right.
\end{equation}
where $\frac{\partial U}{\partial \nu^\sigma} (x, 0) = - \lim\limits_{t \to 0^+} t^{1 - 2 \sigma} \partial_t U(x, t)$. By \cite{CS-07} we only need to derive a local estimate for the trace 
$$
u(x) : = U(x, 0)
$$
of a non-negative solution $U(x, t)$ of \eqref{UKB2} near the origin.

When $\sigma = 1/2$, the equation \eqref{UKB2} appears in the study of prescribing mean curvature on $\partial \mathbb{B}^{n + 1}$ and zero scalar curvature in $\mathbb{B}^{n + 1}$; see, for example, \cite{CXY, DMA, Es-96, EG}. In this case, the equation is without weight and thus elliptic.

We say that $U$ is a weak solution of \eqref{UKB2} if $U$ is in the weighted Sobolev space $W^{1, 2} (t^{1 - 2 \sigma}, \mathcal{B}_2^+ \setminus \overline{\mathcal{B}_\varepsilon^+})$ for any $0 < \varepsilon < 2$ and it satisfies
\begin{equation}\label{nayhdy63k}
\int_{\mathcal{B}_2^+} t^{1 - 2 \sigma} \nabla U \nabla \Psi = \int_{\partial' \mathcal{B}_2^+} K U^\frac{n + 2 \sigma}{n - 2 \sigma} \Psi
\end{equation}
for every $\Psi \in C_c^\infty ( (\mathcal{B}_2^+ \cup \partial' \mathcal{B}_2^+) \setminus \{ 0 \} )$.

Before stating our first theorem, we introduce a notation $\mathcal{C}^\alpha (B_2)$ with $\alpha \in (0, 1]$.

\begin{definition} For $\alpha \in (0, 1)$, $\mathcal{C}^\alpha (B_2)$ is the set of all functions $f \in C(B_2)$ satisfying
$$
|f(x) - f(y)| \leq c(|x - y|) |x - y|^\alpha ~~~~~~ \textmd{for all} ~ x, y \in B_2,
$$
where $c(\cdot)$ is a non-negative continuous function with $c(0) = 0$. For $\alpha = 1$, $\mathcal{C}^1 (B_2)$ is the usual space $C^1(B_2)$. 
\end{definition}

Our first result is the following local estimate for non-negative solutions of \eqref{UKB2} in dimension $n = 2$ or $3$.

\begin{theorem}\label{Pri} 
Suppose that $\sigma \in [1/2, 1)$, $n = 2$ or $3$, and $K \in \mathcal{C}^\alpha (B_2)$ is a positive function with $\alpha = (n - 2 \sigma)/2$.  If $\sigma = 1/2$ and $n = 3$, then we additionally suppose that $\nabla K(0) = 0$.  Let $U$ be a positive weak solution of \eqref{UKB2}. Then there exists a constant $C > 0$ such that 
\begin{equation}\label{vnth89mk}
u(x) \leq C |x|^{ - \frac{n - 2 \sigma}{2} } 
\end{equation} 
for all $x \in B_1 \setminus \{ 0 \}$. 
\end{theorem}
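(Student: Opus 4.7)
The plan is to argue by contradiction via a blow-up/doubling analysis and a Pohozaev identity for the extension problem, where the Hölder exponent $\alpha = (n - 2\sigma)/2$ is tuned exactly to absorb the oscillation of $K$ under the critical scaling.

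\emph{Step 1: Peak selection.} Assume the estimate fails, so there exists $x_k \to 0$ with $u(x_k)|x_k|^{(n-2\sigma)/2} \to \infty$. Applying a Polacik--Quittner--Souplet type doubling lemma to the quantity $u^{2/(n-2\sigma)}$, I would extract a sequence of ``peak'' points $\bar x_k \to 0$ together with scales $\lambda_k := u(\bar x_k)^{-2/(n-2\sigma)}$ satisfying $\lambda_k/|\bar x_k| \to 0$ and $u(y) \leq (1+o(1)) u(\bar x_k)$ for $|y - \bar x_k| \leq R\lambda_k$ and every fixed $R$. The separation $\lambda_k \ll |\bar x_k|$ is what isolates the concentration from the singularity at $0$.

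\emph{Step 2: Rescaling and limiting bubble.} Set
$$
U_k(X) := \lambda_k^{(n-2\sigma)/2} U(\bar x_k + \lambda_k X),
$$
which satisfies \eqref{UKB2} on $\mathcal{B}_{R_k}^+$ (with $R_k \to \infty$) with coefficient $K_k(x) := K(\bar x_k + \lambda_k x) \to K(0)$ locally uniformly, together with $U_k(0,0) = 1$ and $U_k \leq 2$ on compact sets. Weighted elliptic regularity in the spirit of Caffarelli--Silvestre and Jin--Li--Xiong produces a locally uniform subsequential limit $V \geq 0$ on $\overline{\R^{n+1}_+}$ solving the constant-coefficient critical extension problem, and the Jin--Li--Xiong classification (equivalently, Chen--Li--Ou for the dual integral equation) forces $V(\cdot,0)$ to be a standard bubble
$$
v(x) = c_0 \Bigl(\tfrac{\mu}{\mu^2 + |x - x_0|^2}\Bigr)^{(n-2\sigma)/2}.
$$

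\emph{Step 3: Pohozaev identity and contradiction.} Testing \eqref{UKB2} against $(X - (\bar x_k,0))\cdot \nabla U + \tfrac{n-2\sigma}{2} U$ on $\mathcal{B}_{\lambda_k R}^+(\bar x_k,0)$ and using the criticality of the exponent $p = (n+2\sigma)/(n-2\sigma)$ to kill the bulk term yields
$$
\tfrac{1}{p+1}\int_{B_{\lambda_k R}(\bar x_k)} (x-\bar x_k)\cdot \nabla K(x)\, u^{p+1}\, dx \;=\; \mathcal{P}_{R,k}(U),
$$
where $\mathcal{P}_{R,k}$ is the usual $t^{1-2\sigma}$-weighted boundary quadratic form on $\partial^+ \mathcal{B}_{\lambda_k R}^+(\bar x_k,0)$. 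After the change of variables $y = (x-\bar x_k)/\lambda_k$ the right side converges to $\mathcal{P}_R(V)$, a strictly nonzero quantity for generic $R$ (computable explicitly from the bubble). Integrating by parts the left side to move the derivative off $K$ and using the Hölder condition $K \in \mathcal{C}^{(n-2\sigma)/2}(B_2)$ with modulus $c(\cdot)$ satisfying $c(0)=0$, the left side is estimated by $c(|\bar x_k|+\lambda_k R)\cdot O(1) = o(1)$ as $k \to \infty$. This contradicts the positivity of $\mathcal{P}_R(V)$. In the threshold case $\alpha = 1$, i.e.\ $\sigma = 1/2$ and $n = 3$, the $\mathcal{C}^1$ modulus is not sufficient to kill a surviving linear term; the extra assumption $\nabla K(0) = 0$ is precisely what eliminates it, which is why it is imposed only in that case.

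The principal obstacle is the coupling of the two small scales $\lambda_k \ll |\bar x_k| \ll 1$: one needs $\lambda_k / |\bar x_k| \to 0$ for Step 2 (so the singularity at $0$ disappears in the blow-up), while one simultaneously needs the Pohozaev integral, whose natural size is $|\bar x_k|^{(n-2\sigma)/2}$ times the modulus of continuity of $\nabla K$, to be controlled by $o(1)$ against the nonvanishing boundary term. The dimension constraint $n \in \{2,3\}$ and $\sigma \in [1/2,1)$ ensures that $\alpha = (n-2\sigma)/2 \in (0,1]$ is admissible and that $|y|^\alpha v^{p+1}$ is integrable on $\R^n$, so the scaling matches exactly and the ``flatness'' encoded in $\mathcal{C}^\alpha$ is the sharp condition for the contradiction to go through.
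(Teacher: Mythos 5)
Your Steps 1 and 2 broadly parallel the paper's Step 1 (peak selection, rescaling, and passage to a limit bubble via the classification result of Jin--Li--Xiong), although the paper finds local maxima directly rather than invoking a doubling lemma. The genuine divergence is in Step 3: the paper runs the method of moving spheres on the rescaled extension $W_j$, comparing $W_j$ with its Kelvin transform $W_j^\lambda$ and inserting a correction $A_\lambda$ built from a Green's function for the weighted problem (Lemmas \ref{B-BL}--\ref{G3} and Lemma \ref{W-WL}), while you propose a Pohozaev identity. Unfortunately Step 3 as written does not close.

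The gap is in the claim that the limiting boundary quadratic form $\mathcal{P}_R(V)$ is ``a strictly nonzero quantity for generic $R$.'' It is not. The Pohozaev identity for the constant-coefficient extension problem, tested against the dilation field and integrated over $\mathcal{B}_R^+$, reduces \emph{exactly} to
\[
\mathcal{P}_R(V) = \frac{1}{p+1}\int_{B_R}(x\cdot\nabla K)\,v^{p+1}\,dx,
\]
and for the blow-up limit $V$ the coefficient is constant, so $\mathcal{P}_R(V)=0$ for \emph{every} $R$. The boundary form is a specific combination of $t^{1-2\sigma}|\nabla V|^2$, $t^{1-2\sigma}(\partial_\nu V)^2$, $t^{1-2\sigma}V\partial_\nu V$ and the flat-boundary term $v^{p+1}$, and precisely because the bubble solves the problem exactly at the critical exponent these contributions cancel. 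Consequently, sending $k\to\infty$ in your identity on $\mathcal{B}_{\lambda_k R}^+(\bar x_k,0)$ gives $o(1)=o(1)$, which is vacuous: the Hölder flatness of $K$ makes the $\nabla K$ term small, but nothing on the other side pushes back. In the Chen--Lin / Li--Zhang tradition the Pohozaev argument must see the \emph{interaction} between the concentrating bubble and a nontrivial background (a regular part or singular mass at $0$), which requires expanding $u$ to second order beyond the bubble and applying the identity on a mesoscopic ball where the boundary term is dominated by cross terms; that extra structure is exactly what is missing here. The paper's moving-spheres argument circumvents this by exploiting instead the presence of the rescaled singular point $-L_j^{2/(n-2\sigma)}x_j\to\infty$ inside $\Omega_j$, which breaks the exact Kelvin symmetry of the bubble in a quantitative way (Lemma \ref{W-WL}) and provides the positivity that drives the moving-sphere contradiction. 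So either you must bring in the second-order asymptotics of the blow-up and the contribution of the singularity, or you should switch to the moving-sphere mechanism of the paper.
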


\begin{remark}
In particular, for the case $\sigma = 1/2$, the local estimate \eqref{vnth89mk} holds for any positive solution of the boundary mean curvature equation \eqref{UKB2} when $K \in \mathcal{C}^{1/2} (B_2)$ in dimension $n = 2$ or $K \in C^1 (B_2)$ with $\nabla K(0) = 0$ in dimension $n = 3$.
\end{remark}

We will prove Theorem \ref{Pri} using the method of moving spheres introduced by Li-Zhu \cite{LZ-95}. See also \cite{CJSX, JLX-14, JDSX, JY, LZ-03, TZ-06, Zhang-02} for more applications of the moving sphere method.

On the other hand, supposing only that $K \in C^1 (B_2)$ is a positive function satisfying $\nabla K(0) = 0$, one wonders if the estimate \eqref{vnth89mk} holds in dimension $n \geq 4$. Such problem has been explored by Leung \cite{Leung-03}  and Taliaferro \cite{Ta-05} in the Laplacian case $\sigma = 1$. More precisely, Taliaferro \cite{Ta-05} showed the existence of positive functions $K \in C^1 (B_2)$ in dimension $n \geq 6$ such that \eqref{ukb2} in the case $\sigma = 1$ has a singular solution $u$ which can be arbitrarily large near the origin. Leung \cite{Leung-03} proved the existence of a positive Lipschitz continuous function $K$ on $B_2$ in dimension $n \geq 5$ such that a solution $u$ of \eqref{ukb2} for $\sigma = 1$ does not satisfy $u(x) = O( |x|^{ - (n - 2)/2 } )$ near the origin.

The second goal of this paper is to generalize the result of Taliaferro \cite{Ta-05} to the fractional equation \eqref{ukb2} which, in particular, implies that \eqref{vnth89mk} does not hold when $n > 2 \sigma + 3$ if we only assume that $K \in C^1 (B_2)$ and $\nabla K(0) = 0$.

Now we study the existence of large singular solutions to the extension equation 
\begin{equation}\label{UKRN}
\left\{
\aligned
{\rm div} (t^{1 - 2 \sigma} \nabla U) & = 0 & \textmd{in} & ~ \R_+^{n + 1}, \\
\frac{\partial U}{\partial \nu^\sigma} (x, 0) & = K(x) U(x, 0)^\frac{n + 2 \sigma}{n - 2 \sigma} ~~~~~~ & \textmd{on} & ~ \partial \R_+^{n + 1} \setminus \{ 0 \}.
\endaligned
\right.
\end{equation}

\begin{theorem}\label{Lar} 
Suppose that $\sigma \in (0, 1)$ and $n > 2 \sigma + 3$ is an integer. Let $k : \R^n \to \R$ be a $C^1$ function which is bounded between two positive constants and satisfies $\nabla k(0) = 0$. Let $\varepsilon$ be a positive number and $\varphi : (0, 1) \to (0, \infty)$ be a continuous function. Then there exists a $C^1$ positive function $K : \R^n \to \R$ satisfying $\nabla K(0) = 0$, $K(0) = k(0)$, $K(x) = k(x)$ for $|x| \geq \varepsilon$ and
\begin{equation}\label{K-k}
\| K - k \|_{C^1 (\R^n)} < \varepsilon
\end{equation}
such that \eqref{UKRN} has a positive solution $U$ whose trace satisfies
\begin{equation}\label{u-phi}
u(x) \neq O ( \varphi (|x|) ) ~~~~~~ \textmd{as} ~ |x| \to 0^+
\end{equation}
and
\begin{equation}\label{u-inf}
u(x) = O (|x|^{2 \sigma - n}) ~~~~~~ \textmd{as} ~ |x| \to \infty.
\end{equation}
\end{theorem}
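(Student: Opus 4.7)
The strategy is to construct $U$ as a sum of highly concentrated fractional Yamabe bubbles peaked at a sequence $\{y_j\}$ accumulating at the origin, adapting to the extension problem \eqref{UKRN} the scheme of Taliaferro \cite{Ta-05} from the Laplacian case. For each $\lambda > 0$ and $y \in \R^n$, let $\mathcal{U}_{\lambda, y}$ denote the standard ground-state bubble for the extension equation, normalized so that $\partial \mathcal{U}_{\lambda, y}/\partial \nu^\sigma = \mathcal{U}_{\lambda, y}^p$ on $\partial \R^{n+1}_+$ with $p = (n+2\sigma)/(n-2\sigma)$; its trace peaks at height $\sim \lambda^{-(n-2\sigma)/2}$ at $y$ and decays like $|x - y|^{2\sigma - n}$ at infinity, with gradient decaying one order faster.

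\medskip

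I would pick sequences $r_j \downarrow 0$ in $(0, \varepsilon/8)$ and concentration parameters $\lambda_j \in (0, r_j - r_{j+1})$ satisfying the peak constraint $\lambda_j^{-(n-2\sigma)/2} \geq 2\varphi(r_j)$, and set $y_j = r_j e_1$ and $A_j = k(y_j)^{-(n-2\sigma)/(4\sigma)}$. In a neighborhood of the origin, the trial solution is
\[
U_{\text{in}}(x, t) \ = \ \sum_{j=1}^\infty A_j \bigl[ \mathcal{U}_{\lambda_j, y_j}(x,t) + \mathcal{U}_{\lambda_j, -y_j}(x,t) \bigr],
\]
where the reflected centers $-y_j$ are included to enforce $\nabla K(0) = 0$ by symmetry. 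Since the rescaling $A_j$ is chosen so that each bubble $A_j \mathcal{U}_{\lambda_j, \pm y_j}$ satisfies the extension equation with boundary coefficient exactly $k(y_j)$, the boundary identity reads $\partial U_{\text{in}}/\partial \nu^\sigma = K_{\text{in}}(x) u_{\text{in}}^p$, with
\[
K_{\text{in}}(x) \ = \ \frac{\sum_j k(y_j) \bigl[ (A_j \mathcal{U}_{\lambda_j, y_j}(x,0))^p + (A_j \mathcal{U}_{\lambda_j, -y_j}(x,0))^p \bigr]}{u_{\text{in}}(x)^p}.
\]
The largeness condition \eqref{u-phi} is automatic since $u_{\text{in}}(y_j) \geq A_j \mathcal{U}_{\lambda_j, y_j}(y_j, 0) \sim \lambda_j^{-(n-2\sigma)/2} > \varphi(r_j)$, and the decay \eqref{u-inf} follows by summing the $|x|^{2\sigma - n}$ bubble tails, arranged by $\sum_j A_j \lambda_j^{(n-2\sigma)/2} < \infty$.

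\medskip

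The heart of the proof is the choice of $r_j, \lambda_j$ so that $K_{\text{in}}$ is $C^1$-close to $k$ on $B_\varepsilon$ with $K_{\text{in}}(0) = k(0)$ and $\nabla K_{\text{in}}(0) = 0$. The reflective symmetry yields $\nabla K_{\text{in}}(0) = 0$ directly; near each peak $y_{j_0}$, the bubble $A_{j_0} \mathcal{U}_{\lambda_{j_0}, y_{j_0}}$ dominates the sum so that $K_{\text{in}}(x) \to k(y_{j_0}) \to k(0)$, giving continuity $K_{\text{in}}(0) = k(0)$. For the Lipschitz perturbation I would estimate
\[
|K_{\text{in}}(x) - k(x)| \ \leq \ \frac{\sum_j |k(y_j) - k(x)|\,\bigl[ u_{j,+}^p + u_{j,-}^p \bigr](x)}{u_{\text{in}}(x)^p},
\]
which near $y_{j_0}$ is controlled by $\mathrm{Lip}(k)(|x - y_{j_0}| + |y_{j_0}|)$ plus exponentially suppressed cross-bubble contributions, and away from all peaks by the smallness of the $L^\infty$ tails of the bubbles; the analogous computation for $\nabla K_{\text{in}}$ uses the decay $|\nabla u_j(x)| \sim A_j \lambda_j^{(n-2\sigma)/2} |x - y_j|^{2\sigma - n - 1}$. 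Finally, to enforce $K \equiv k$ on $\{|x| \geq \varepsilon\}$, I would glue $U_{\text{in}}$ inside $\mathcal{B}^+_{\varepsilon/2}$ to a prescribed positive extension solution of the $k$-problem on $\{|x| \geq \varepsilon/4\}$, using a smooth cutoff plus a small linear correction which is absorbed thanks to the smallness of $U_{\text{in}}$ and $\nabla U_{\text{in}}$ on the annular gluing region (a consequence of the very small $\lambda_j$).

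\medskip

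The main obstacle is the simultaneous $C^1$ control of $K - k$, which reduces to the summability of the interaction series $\sum_j A_j \lambda_j^{(n-2\sigma)/2} |x - y_j|^{2\sigma - n - 1}$ balanced against the prescribed peak-height constraint $\lambda_j^{-(n-2\sigma)/2} \geq 2\varphi(r_j)$. The dimensional hypothesis $n > 2\sigma + 3$ is precisely the threshold at which these gradient-tail series can be made arbitrarily small in $L^\infty(\R^n)$ by choosing $\{\lambda_j\}$ compatibly with the peak-height requirement and $\{r_j\}$ decaying fast enough; it reduces to Taliaferro's $n \geq 6$ when $\sigma = 1$. Carrying out the uniform $C^1$ estimate together with the gluing that enforces $K \equiv k$ outside $B_\varepsilon$ without spoiling the smallness $\|K - k\|_{C^1} < \varepsilon$ constitutes the bulk of the technical work.
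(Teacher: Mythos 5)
Your high-level picture---a tower of fractional bubbles concentrating at the origin, with heights prescribed by $\varphi$, defining $K$ as $((-\Delta)^\sigma u)/u^{(n+2\sigma)/(n-2\sigma)}$ and then extending via the Poisson kernel---is the right starting point and matches the paper. However, the proposal has a genuine gap in the most delicate part of the argument: controlling $K$ \emph{between} the bubbles.

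The critical missing ingredient is a background correction function $u_0$ added to the sum of bubbles. If you set $u = \sum_j u_j$ alone (even with reflected copies), then with each bubble satisfying $(-\Delta)^\sigma u_j = c_j u_j^p$ with $p=(n+2\sigma)/(n-2\sigma)$, the quotient is $K(x) = \sum_j c_j u_j(x)^p/(\sum_j u_j(x))^p$, and Lemma \ref{Talia} of the paper shows $\sum a_i^p/(\sum a_i)^p < 1$ with the ratio dropping sharply whenever several terms are comparable (as small as $N^{1-p}$ for $N$ comparable terms). In the transition regions between bubble peaks no single bubble dominates, so $K$ drops far below $\inf k > 0$ and cannot be $C^1$-close to $k$, regardless of how small the $\lambda_j$'s are. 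Your sentence ``away from all peaks by the smallness of the $L^\infty$ tails'' does not address this: the tails being small in $L^\infty$ says nothing about the \emph{ratio}. The paper fixes this in Step~4 by building a bounded correction $u_0$ via the method of sub- and super-solutions, together with a ``capped'' nonlinearity $H(x,v)=F(\widetilde u(x),\kappa(x),p(x,v))$ (using the auxiliary functions $f,F,M,Z$) so that $(-\Delta)^\sigma u_0 = H(x,u_0)$, which simultaneously (i) supplies the dominant term away from the peaks, (ii) forces $\kappa \le K \le k$ pointwise, and (iii) keeps $\nabla K$ under control near the peak boundaries. Without something playing the role of $u_0$ and the cap $F/M$, the $C^1$-closeness $\|K-k\|_{C^1}<\varepsilon$ fails.

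Two secondary issues. First, the proposed ``gluing to a prescribed positive extension solution of the $k$-problem'' to enforce $K\equiv k$ for $|x|\ge\varepsilon$ is not rigorous: the equation is nonlinear and nonlocal, and cutoff gluing does not commute with $(-\Delta)^\sigma$. The paper avoids gluing entirely by defining a global $\kappa$ using cutoffs $\eta_i$ supported near the bubble centers (all inside a small ball), so that $\kappa\equiv k$ outside $B_{2\delta_1}$ and therefore $K=k$ there directly. Second, while reflecting the bubble centers through the origin can be made to give $\nabla K(0)=0$, the paper obtains this for free from the sandwich $\kappa\le K\le k$ with $\kappa(0)=K(0)=k(0)$, $\nabla\kappa(0)=\nabla k(0)=0$ and $K\in C^1$; the real work is showing $K\in C^1$ near $0$, where the dimensional threshold $n>2\sigma+3$ enters via quantitative estimates on $\nabla u_0$, on the cutoff gradients in $\kappa$, and on the set where the cap $M$ is active---not merely via ``summability of gradient-tail series,'' which is too coarse a heuristic to recover the precise exponents in \eqref{1kir}, \eqref{estu0Mj}, and \eqref{esttildeuj1}.
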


\begin{remark} 
When $\sigma = 1/2$, Theorem \ref{Lar} indicates that there exist positive functions $K \in C^1 (\R^n)$ in dimension $n \geq 5$ such that  the boundary mean curvature equation \eqref{UKRN} has a positive solution which could be arbitrarily large near the singularity $0$. 
\end{remark}

In Theorem \ref{Lar}, the function $\varphi : (0, 1) \to (0, \infty)$ is arbitrarily given and thus its values can be taken to be very large near $0$. Hence, the conclusion of Theorem \ref{Lar} that a solution $U$ of \eqref{UKRN} can be required to satisfy \eqref{u-phi} is in contrast to the result of Theorem \ref{Pri}. Our basic strategy to prove Theorem \ref{Lar} is similar to that introduced by Taliaferro \cite{Ta-05} for $\sigma = 1$, but we first have to set up a framework to fit the nonlocal equation $(- \Delta)^\sigma u = K(x) u^\frac{n + 2 \sigma}{n - 2 \sigma}$ in $\R^n \setminus \{ 0 \}$, and then we extend the constructed solution for this nonlocal equation to \eqref{UKRN} in one dimension higher.

This paper is organized as follows. In Section \ref{Sect02}, we give some basic results for the standard bubble solutions and a Green's formula on the exterior of a half ball with Neumann boundary condition. Section \ref{Sect3} and Section \ref{Sect4} are devoted to the proofs of Theorem \ref{Pri} and Theorem \ref{Lar}, respectively.

\vskip0.10in

\noindent{\bf Acknowledgements.} Both authors would like to thank Professor Tianling Jin for many helpful discussions and encouragement.

\section{Preliminaries}\label{Sect02}

In this section, we introduce some notations and some basic results which will be used in the proof of Theorem \ref{Pri} in the next section.  We denote $\mathcal{B}_R (X)$ as the open ball in $\R^{n + 1}$ with radius $R$ and center $X$, $\mathcal{B}_R^+ (X)$ as $\mathcal{B}_R (X) \cap \R_+^{n + 1}$, and $B_R (x)$ as the open ball in $\R^n$ with radius $R$ and center $x$.  For simplicity we also write $\mathcal{B}_R (0)$, $\mathcal{B}_R^+ (0)$ and $B_R (0)$ as $\mathcal{B}_R$, $\mathcal{B}_R^+$ and $B_R$,  respectively.  For a set $\Omega \subset \overline{\R_+^{n + 1}}$ with boundary $\partial \Omega$, we denote $\partial' \Omega$ as the interior of $\overline{\Omega} \cap \partial \R_+^{n + 1}$ in $\R^n = \partial \R_+^{n + 1}$ and $\partial'' \Omega = \partial \Omega \setminus \partial' \Omega$. Thus, $\partial' \mathcal{B}_R^+ = B_R$ and $\partial'' \mathcal{B}_R^+ = \partial \mathcal{B}_R \cap \overline{\R_+^{n + 1}}$.

In this section, we always assume that $\sigma \in [1/2, 1)$ and $n \geq 2$ is an integer. For $\lambda > 0$, $Y \in \overline{\R_+^{n + 1}} \setminus \{ 0 \}$ and $y \in \R^n \setminus \{ 0 \}$, we denote 
$$
Y^\lambda = \frac{\lambda^2 Y}{|Y|^2} ~~~~~~ \textmd{and} ~~~~~~ y^\lambda = \frac{\lambda^2 y}{|y|^2}.
$$
Let $U : \overline{\R_+^{n + 1}} \to (0, \infty)$ and $u : \R^n \to (0, \infty)$ be two functions, then their Kelvin transformations are defined by 
$$
U^\lambda (Y) = \left( \frac{\lambda}{|Y|} \right)^{n - 2 \sigma} U(Y^\lambda) ~~~~~~ \textmd{and} ~~~~~~ u^\lambda (y) = \left( \frac{\lambda}{|y|} \right)^{n - 2 \sigma} u(y^\lambda).
$$
Let
$$
\widetilde{w} (y) = \left( \frac{1}{1 + |y|^2} \right)^\frac{n - 2 \sigma}{2}
$$
and
$$
\aligned
\widetilde{W} (y, t)  = \mathcal{P}_\sigma [\widetilde{w}] (y, t) & = \int_{\R^n} \mathcal{P}_\sigma (y -z, t) \widetilde{w}(z) dz \\
& = \gamma_{n, \sigma} \int_{\R^n} \left( \frac{1}{1 + |z|^2} \right)^\frac{n + 2 \sigma}{2} \left( \frac{1}{1 + |y - t z|^2} \right)^\frac{n - 2 \sigma}{2} dz,
\endaligned
$$
where 
\begin{equation}\label{Poss09}
\mathcal{P}_\sigma(y, t) =\gamma_{n, \sigma}  \frac{ t^{2 \sigma} }{ (|y |^2 + t^2)^{(n + 2 \sigma)/2} }
\end{equation} 
with a constant $\gamma_{n, \sigma}$ such that  $\gamma_{n, \sigma} \int_{\R^n} (1 + |z|^2)^{- (n + 2 \sigma)/2} dz = 1$.  Then it is well-known that $\widetilde{W}$ solves
$$
\left\{
\aligned
{\rm div} (t^{1 - 2 \sigma} \nabla \widetilde{W}) & = 0 & \textmd{in} & ~ \R_+^{n + 1}, \\
\frac{\partial \widetilde{W}}{\partial \nu^\sigma} (x, 0) & = \widetilde{C}_{n, \sigma} \widetilde{W} (x, 0)^\frac{n + 2 \sigma}{n - 2 \sigma} ~~~~~~ & \textmd{on} & ~ \partial \R_+^{n + 1},
\endaligned
\right.
$$
where $\widetilde{C}_{n, \sigma}$ is a positive constant given as 
\begin{equation}\label{Cns}
\widetilde{C}_{n, \sigma} = \frac{ 2 \Gamma (1 - \sigma) \Gamma \left( \frac{n}{2} + \sigma \right) }{ \Gamma (\sigma) \Gamma \left( \frac{n}{2} - \sigma \right) }.
\end{equation}
Furthermore, we claim that

\begin{lemma}\label{B-BL} Let $\lambda_0 = 1/2$ and $\lambda_1 = 2$. Then there exists $C > 0$ depending only on $n$ and $\sigma$ such that
\begin{equation}\label{B-BL1}
\widetilde{W} (Y) - \widetilde{W}^{\lambda_0} (Y) \geq C (|Y| - \lambda_0) |Y|^{2 \sigma - n - 1} ~~~~~~ \textmd{for} ~ Y \in \overline{\R_+^{n + 1}} \setminus \mathcal{B}_{\lambda_0}^+
\end{equation}
and
\begin{equation}\label{B-BL2}
\frac{ \partial (\widetilde{W} - \widetilde{W}^{\lambda_0}) }{\partial \nu} > C > 0 ~~~~~~ \textmd{on} ~ \partial'' \mathcal{B}_{\lambda_0}^+,
\end{equation}
where $\nu$ denotes the unit outer normal vector of $\partial'' \mathcal{B}_{\lambda_0}^+$. Moreover, we have
\begin{equation}\label{B-BL3}
\widetilde{W} (Y) - \widetilde{W}^{\lambda_1} (Y) < 0 ~~~~~~ \textmd{for} ~ Y \in \overline{\R_+^{n + 1}} \setminus \overline{\mathcal{B}_{\lambda_1}^+}.
\end{equation}
\end{lemma}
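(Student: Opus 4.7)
My plan is to exploit the conformal invariance of the extension equation to identify $\widetilde W^\lambda$ with a rescaled bubble, and then to run maximum-principle and Hopf-type arguments on the difference.

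The first step is to recognize $\widetilde W^\lambda$ as the rescaled extension bubble $W_{\lambda^2}(Y) := \lambda^{-(n-2\sigma)} \widetilde W(Y/\lambda^2)$. This uses the conformal covariance of the weighted operator ${\rm div}(t^{1-2\sigma}\nabla\cdot)$ under the Kelvin inversion $Y \mapsto Y^\lambda$ of $\R^{n+1}$: the transform $\widetilde W^\lambda$ is again a bounded solution of the extension problem in $\R_+^{n + 1} \setminus \{0\}$; its trace $\widetilde w^\lambda(z) = \lambda^{n - 2\sigma}/(|z|^2 + \lambda^4)^{(n - 2 \sigma)/2}$ agrees with the trace of $W_{\lambda^2}$, and uniqueness of bounded extensions with the correct decay forces the equality. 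A short computation, using $(|z|^2 + \lambda^4) - \lambda^2(1 + |z|^2) = (1 - \lambda^2)(|z|^2 - \lambda^2)$ together with the mean value theorem for $(\cdot)^{(n - 2\sigma)/2}$, then yields the factorization
\begin{equation*}
\widetilde w(z) - \widetilde w^\lambda(z) = (1 - \lambda^2)(|z|^2 - \lambda^2) \, h_\lambda(z), \qquad h_\lambda(z) > 0,
\end{equation*}
so the sign of the trace difference is governed entirely by the signs of $1 - \lambda^2$ and $|z|^2 - \lambda^2$.

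Setting $\Phi := \widetilde W - \widetilde W^\lambda$, linearization of the critical Neumann condition gives $\partial \Phi / \partial \nu^\sigma = b(x) \Phi$ at $t = 0$ with $b(x) > 0$, while ${\rm div}(t^{1-2\sigma}\nabla \Phi) = 0$ in $\R_+^{n + 1} \setminus \{0\}$. By construction $\Phi \equiv 0$ on the hemisphere (the fixed set of the Kelvin inversion) and $\Phi \to 0$ at infinity. For $\lambda = \lambda_0 = 1/2$, the trace of $\Phi$ is nonnegative on the flat exterior portion, and the strong maximum principle plus Hopf's lemma for the weighted operator give $\Phi > 0$ strictly in the open exterior of $\mathcal B_{\lambda_0}^+$, together with a pointwise strict outward normal derivative on the compact hemisphere $\partial'' \mathcal B_{\lambda_0}^+$; a compactness argument then upgrades this to the uniform bound \eqref{B-BL2}. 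For $\lambda = \lambda_1 = 2$ the sign of the trace is reversed, and the same argument yields \eqref{B-BL3}.

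To upgrade pointwise positivity into the quantitative estimate \eqref{B-BL1}, I would study the ratio $\Phi(Y) / [(|Y| - \lambda_0) |Y|^{2\sigma - n - 1}]$ on the closed exterior. Using that $|Y|^{2\sigma - n}$ is an explicit solution of the weighted equation in $\R_+^{n + 1} \setminus \{0\}$ (a direct calculation), one gets the asymptotic $\widetilde W(Y) \sim C_* |Y|^{2\sigma - n}$ as $|Y| \to \infty$, which via the identity $\widetilde W^{\lambda_0} = W_{1/4}$ refines to $\Phi(Y) \sim C_*(1 - 2^{-(n - 2\sigma)}) |Y|^{2\sigma - n}$ with strictly positive leading coefficient; the candidate ratio therefore admits a positive limit at infinity. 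Near the hemisphere $\{|Y| = \lambda_0\}$, an L'H\^opital-type reduction together with \eqref{B-BL2} shows that the ratio extends continuously and strictly positively. Continuity and positivity in the open exterior, combined with these two boundary behaviors, force a uniform positive lower bound on the ratio, which is precisely \eqref{B-BL1}.

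The main obstacle is making the strong maximum principle and Hopf boundary-point lemma rigorous on the unbounded exterior with mixed boundary data (Dirichlet on the hemisphere, oblique Neumann with variable positive coefficient on the flat part, decay at infinity) in the presence of the degeneracy of the weight $t^{1 - 2\sigma}$: one must handle carefully the edge where the hemisphere meets the flat boundary $\{t = 0\}$, and must match the polynomial decay of $\Phi$ at infinity with the test function's leading asymptotic so that the comparison at infinity is not inconclusive.
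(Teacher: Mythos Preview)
Your approach is correct and reaches the same conclusions, but it takes a more PDE-theoretic route than the paper. The paper bypasses the maximum principle and Hopf lemma entirely by writing the difference $\widetilde W - \widetilde W^{\lambda_0}$ as an explicit Poisson-type integral:
\[
\widetilde W(Y) - \widetilde W^{\lambda_0}(Y)
= \gamma_{n,\sigma}\int_{\R^n\setminus B_{\lambda_0}}
\Bigl(\tfrac{t^{2\sigma}}{|Y-x|^{n+2\sigma}} - \bigl(\tfrac{\lambda_0}{|x|}\bigr)^{n+2\sigma}\tfrac{t^{2\sigma}}{|Y-x^{\lambda_0}|^{n+2\sigma}}\Bigr)
\bigl(\widetilde w(x)-\widetilde w^{\lambda_0}(x)\bigr)\,dx,
\]
where both the kernel and the trace factor are manifestly positive on the relevant region. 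Differentiating this formula in the radial direction gives the normal derivative on $\partial''\mathcal B_{\lambda_0}^+\cap\R_+^{n+1}$ directly as a positive integral; the edge $\partial B_{\lambda_0}$ is handled by the explicit computation $\partial_\nu(\widetilde w-\widetilde w^{\lambda_0})>0$, and then $C^1$ regularity up to the closure plus compactness give the uniform bound. The asymptotic at infinity is obtained, as you do, from the conformal identity $\widetilde W(Y)=|Y|^{2\sigma-n}\widetilde W(Y/|Y|^2)$.

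What your argument buys is conceptual clarity (the reduction to a comparison of two bubbles and a clean sign dichotomy via the factorization $(1-\lambda^2)(|z|^2-\lambda^2)h_\lambda$); what the paper's buys is that the ``main obstacle'' you flag---the strong maximum principle and Hopf lemma on an unbounded domain with the degenerate weight $t^{1-2\sigma}$, and the delicate behavior at the corner $\partial B_{\lambda_0}\times\{0\}$---simply never arises, because positivity and the normal derivative are read off from an explicit integrand rather than inferred from qualitative PDE principles. Your obstacle is surmountable (the trace derivative at the edge is explicit from your own factorization, and interior Hopf is classical away from $t=0$), but the integral-representation route is shorter and avoids the regularity bookkeeping.
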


\begin{proof} By direct computation, we obtain
$$
\widetilde{w}^{\lambda_0} (y) = \left( \frac{\lambda_0^2}{\lambda_0^4 + |y|^2} \right)^\frac{n - 2 \sigma}{2}.
$$
Then $\widetilde{w} (y) > \widetilde{w}^{\lambda_0} (y)$ for all $y \in \R^n \setminus \overline{B_{\lambda_0}}$. It follows that for $Y \in \R_+^{n + 1} \setminus \overline{\mathcal{B}_{\lambda_0}^+}$,
$$
\aligned
& \widetilde{W} (Y) - \widetilde{W}^{\lambda_0} (Y) \\
= & ~ \gamma_{n, \sigma} \int_{\R^n \setminus B_{\lambda_0}} \bigg( \frac{ t^{2 \sigma} }{ |Y - x|^{n + 2 \sigma} } - \bigg( \frac{\lambda_0}{|x|} \bigg)^{n + 2 \sigma} \frac{ t^{2 \sigma} }{ |Y - x^{\lambda_0}|^{n + 2 \sigma} } \bigg) ( \widetilde{w} (x) - \widetilde{w}^{\lambda_0} (x) ) dx > 0.
\endaligned
$$
We also have for $Y \in \partial'' \mathcal{B}_{\lambda_0}^+ \cap \R_+^{n + 1}$,
$$
\frac{ \partial (\widetilde{W} - \widetilde{W}^{\lambda_0}) }{\partial \nu} (Y) = \gamma_{n, \sigma} (n + 2 \sigma) \int_{\R^n \setminus B_{\lambda_0}} \frac{ t^{2 \sigma} (|x|^2 - \lambda_0^2) }{ \lambda_0 |Y - x|^{n + 2 \sigma + 2} } ( \widetilde{w} (x) - \widetilde{w}^{\lambda_0} (x) ) dx > 0,
$$
and for all $y \in \partial B_{\lambda_0}$,
$$
\frac{ \partial (\widetilde{w} - \widetilde{w}^{\lambda_0}) }{\partial \nu} (y) = (n - 2 \sigma) \left( \frac{1}{1 + \lambda_0^2} \right)^\frac{n - 2 \sigma + 2}{2} \frac{1 - \lambda_0^2}{\lambda_0} > 0.
$$
Note that $\widetilde{W} - \widetilde{W}^{\lambda_0} \in C_{loc}^1 (\overline{\R_+^{n + 1}})$, we obtain that \eqref{B-BL2} holds.  

On the other hand, since  $\widetilde{W}$ is conformally invariant, i.e., $\widetilde{W} (Y) = |Y|^{2 \sigma - n} \widetilde{W} (Y/|Y|^2)$,  we have 
$$
\lim_{|Y| \to \infty} |Y|^{n - 2 \sigma} (\widetilde{W} - \widetilde{W}^{\lambda_0}) (Y) = \widetilde{W} (0) - \lambda_0^{n - 2 \sigma} \widetilde{W} (0) = 1 - \lambda_0^{n - 2 \sigma} > 0.
$$ 
Hence, there exists $C > 0$ depending only on $n$ and $\sigma$ such that \eqref{B-BL1} holds. Using the similar argument, we can prove that \eqref{B-BL3} holds.
\end{proof}

We define
\begin{equation}\label{Green}
G_\lambda (Y, \eta) = N_{n, \sigma} \bigg( |Y - \eta|^{2 \sigma - n} - \bigg( \frac{\lambda}{|\eta|} \bigg)^{n - 2 \sigma} |Y - \eta^\lambda|^{2 \sigma - n} \bigg)
\end{equation}
for $Y = (y, t) \in \R_+^{n + 1} \setminus \mathcal{B}_\lambda^+$ and $\eta \in \R^n \setminus B_\lambda$, where $N_{n, \sigma}$ satisfies
$$
N_{n, \sigma} (n - 2 \sigma) \int_{\R^n} (1 + |z|^2)^{(2 \sigma - n - 2)/2} dz = 1.
$$
Then it is elementary to check that

\begin{lemma}\label{G1} The function $G_\lambda$ satisfies the following:
\begin{enumerate}[label = (\roman*)]
\item $G_\lambda (Y, \eta) > 0$ when $|Y| > \lambda$ and $|\eta| > \lambda$.

\item $G_\lambda (Y, \eta) = 0$ when $|Y| = \lambda$ or $|\eta| = \lambda$.

\item ${\rm div} (t^{1 - 2 \sigma} \nabla_Y G_\lambda) = 0$ for $Y = (y, t) \in \R_+^{n + 1} \setminus \overline{\mathcal{B}_\lambda^+}$ and $\eta \in \R^n \setminus \overline{B_\lambda}$.
\end{enumerate}
\end{lemma}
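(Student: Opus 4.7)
The proof will be elementary, resting on two ingredients: a single Kelvin inversion identity on the boundary, and the direct verification that $|Y-p|^{2\sigma-n}$ solves the degenerate extension equation for any fixed boundary point $p$.  The plan is first to establish the algebraic identity
\begin{equation*}
|\eta|^2\,|Y-\eta^\lambda|^2 \;-\; \lambda^2\,|Y-\eta|^2 \;=\; (|Y|^2-\lambda^2)\,(|\eta|^2-\lambda^2),
\end{equation*}
which I would verify by direct expansion, using $\eta^\lambda = \lambda^2\eta/|\eta|^2$ and the fact that the last coordinate of $\eta$ vanishes so that $Y\cdot\eta = y\cdot\eta$ and $Y\cdot\eta^\lambda = \lambda^2\,y\cdot\eta/|\eta|^2$.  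This single identity drives both (i) and (ii).

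For part (ii), when $|Y|=\lambda$ or $|\eta|=\lambda$ the right-hand side vanishes, so $|\eta|\,|Y-\eta^\lambda| = \lambda\,|Y-\eta|$; raising to the power $2\sigma-n$ and rearranging gives $(\lambda/|\eta|)^{n-2\sigma}|Y-\eta^\lambda|^{2\sigma-n} = |Y-\eta|^{2\sigma-n}$, which is exactly the cancellation that makes $G_\lambda(Y,\eta)=0$.  For part (i), when $|Y|,|\eta|>\lambda$ the right-hand side is strictly positive, so $\lambda\,|Y-\eta| < |\eta|\,|Y-\eta^\lambda|$; since $2\sigma-n<0$ (because $n\geq 2$ and $\sigma<1$) this yields $\lambda^{2\sigma-n}|Y-\eta|^{2\sigma-n} > |\eta|^{2\sigma-n}|Y-\eta^\lambda|^{2\sigma-n}$, i.e., $|Y-\eta|^{2\sigma-n} > (\lambda/|\eta|)^{n-2\sigma}|Y-\eta^\lambda|^{2\sigma-n}$, which is precisely $G_\lambda(Y,\eta)>0$.

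For part (iii), the plan is to show by direct computation that for any fixed $p\in\R^n$ the function $Y\mapsto|Y-p|^{2\sigma-n}$ satisfies ${\rm div}(t^{1-2\sigma}\nabla\,\cdot\,)=0$ wherever $t>0$ and $Y\neq p$, and then apply this with $p=\eta$ and $p=\eta^\lambda$.  Writing $\nabla_Y|Y-p|^{2\sigma-n}=(2\sigma-n)|Y-p|^{2\sigma-n-2}(Y-p)$ and expanding ${\rm div}(g\,(Y-p))$ with $g=t^{1-2\sigma}|Y-p|^{2\sigma-n-2}$, the calculation collapses to the purely algebraic cancellation $(n+1)+(2\sigma-n-2)+(1-2\sigma)=0$: the first term comes from ${\rm div}_Y(Y-p)=n+1$, the second from $(Y-p)\cdot\nabla|Y-p|^{2\sigma-n-2}$, and the third from $\partial_t(t^{1-2\sigma})\cdot t$, where I crucially use that the last component of $Y-p$ equals $t$ since $p\in\R^n$.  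For $Y$ in the stated region we have $t>0$, so $Y\neq\eta$ and $Y\neq\eta^\lambda$; linearity of the operator together with the constancy of $(\lambda/|\eta|)^{n-2\sigma}$ in $Y$ then delivers (iii).  No serious obstacle is anticipated; the only bookkeeping is to keep straight the inversion identity and the sign of $n-2\sigma$.
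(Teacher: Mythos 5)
Your proof is correct and complete. The paper does not provide a proof of this lemma, remarking only that "it is elementary to check," so there is nothing to compare against; but your argument is the canonical elementary verification. The Kelvin inversion identity $|\eta|^2|Y-\eta^\lambda|^2 - \lambda^2|Y-\eta|^2 = (|Y|^2-\lambda^2)(|\eta|^2-\lambda^2)$ does indeed hold by direct expansion (the cross terms $-2\lambda^2\,Y\cdot\eta$ cancel), and since $2\sigma-n<0$, the monotone decreasing map $s\mapsto s^{2\sigma-n}$ converts the resulting equality or strict inequality of distances into parts (ii) and (i) respectively. For part (iii), your computation that $\mathrm{div}(t^{1-2\sigma}\nabla_Y|Y-p|^{2\sigma-n})=0$ for $p\in\R^n$, $t>0$, $Y\neq p$ reduces correctly to $(n+1)+(2\sigma-n-2)+(1-2\sigma)=0$, using that $(Y-p)_{n+1}=t$ because $p$ lies on the boundary; linearity and the $Y$-independence of the factor $(\lambda/|\eta|)^{n-2\sigma}$ finish the claim, and $t>0$ guarantees $Y\notin\{\eta,\eta^\lambda\}\subset\R^n$.
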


\begin{lemma}\label{G2} Suppose that $E$ is a smooth bounded domain of $\R^n$ with $B_{2 \lambda} \subset E$ and  $q_\lambda \in C(\overline{E} \setminus B_\lambda)$. Let
$$
\Phi_\lambda (Y) = \int_{E \setminus B_\lambda} G_\lambda (Y, \eta) q_\lambda (\eta) d \eta ~~~~~~ \textmd{for} ~ Y \in \R_+^{n + 1} \setminus \mathcal{B}_\lambda^+.
$$
Then the function $\Phi_\lambda$ satisfies:
\begin{enumerate}[label = (\roman*)]
\item $\Phi_\lambda (Y) = 0$ when $|Y| = \lambda$.

\item $\Phi_\lambda$ solves the equation
$$
\left\{
\aligned
{\rm div} (t^{1 - 2 \sigma} \nabla \Phi_\lambda) & = 0 & \textmd{in} & ~ \R_+^{n + 1} \setminus \overline{\mathcal{B}_\lambda^+}, \\
\frac{\partial \Phi_\lambda}{\partial \nu^\sigma} & = q_\lambda ~~~~~~ & \textmd{on} & ~ E \setminus \overline{B_\lambda}.
\endaligned
\right.
$$
\end{enumerate}
\end{lemma}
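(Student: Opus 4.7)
The plan is to split $\Phi_\lambda = \Phi_\lambda^{(1)} - \Phi_\lambda^{(2)}$ according to the two summands in \eqref{Green}, and handle each piece separately. Property (i) is immediate from Lemma \ref{G1}(ii): if $|Y| = \lambda$ then $G_\lambda(Y, \eta) = 0$ for every $\eta \in E \setminus B_\lambda$, so the integrand vanishes identically. For the weighted divergence equation in (ii), fix $Y = (y, t)$ with $t > 0$ and $|Y| > \lambda$; the kernel $G_\lambda(\cdot, \eta)$ is smooth in a neighborhood of $Y$ with derivatives uniformly bounded in $\eta \in \overline{E \setminus B_\lambda}$, so differentiation under the integral is justified, and Lemma \ref{G1}(iii) makes the resulting integrand vanish pointwise.

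The substantive part is the Neumann boundary condition. Fix $y \in E \setminus \overline{B_\lambda}$; I need to show $-\lim_{t\to 0^+} t^{1-2\sigma}\partial_t \Phi_\lambda(y, t) = q_\lambda(y)$. For the reflected piece $\Phi_\lambda^{(2)}$, observe that $|\eta^\lambda| = \lambda^2/|\eta| < \lambda < |y|$, so $|y - \eta^\lambda|$ is bounded below by a positive constant uniformly in $\eta \in \overline{E \setminus B_\lambda}$. Thus $(y, t) \mapsto |Y - \eta^\lambda|^{2\sigma - n}$ is smooth in a neighborhood of $t = 0$ with $\partial_t$-derivative proportional to $t$, and the prefactor $t^{1-2\sigma}$ leaves a $t^{2-2\sigma}$-small term that vanishes uniformly in $\eta$ as $t \to 0^+$. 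Hence $\Phi_\lambda^{(2)}$ contributes nothing to the limit.

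The main term $\Phi_\lambda^{(1)}$ carries the boundary datum via a standard Riesz-type computation. Differentiating inside the integral yields
\begin{equation*}
- t^{1-2\sigma}\partial_t \Phi_\lambda^{(1)}(y, t) = N_{n,\sigma}(n-2\sigma) \int_{E\setminus B_\lambda} \frac{t^{2-2\sigma}}{(|y-\eta|^2 + t^2)^{(n+2-2\sigma)/2}}\, q_\lambda(\eta)\, d\eta,
\end{equation*}
and the change of variables $\eta = y + tz$ recasts the right-hand side as
\begin{equation*}
N_{n,\sigma}(n-2\sigma) \int_{\R^n} (1 + |z|^2)^{-(n+2-2\sigma)/2}\, q_\lambda(y + tz)\, \mathbf{1}_{\{y + tz \in E \setminus B_\lambda\}}(z)\, dz.
\end{equation*}
Since $y$ lies in the interior of $E \setminus \overline{B_\lambda}$, the indicator converges pointwise to $1$ as $t \to 0^+$, and $q_\lambda(y + tz) \to q_\lambda(y)$ by continuity. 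Dominated convergence with integrable majorant $\|q_\lambda\|_{L^\infty}(1 + |z|^2)^{-(n+2-2\sigma)/2}$ then produces $N_{n,\sigma}(n-2\sigma)\, q_\lambda(y) \int_{\R^n}(1+|z|^2)^{(2\sigma - n - 2)/2} dz = q_\lambda(y)$ by the very normalization of $N_{n,\sigma}$. The main obstacle lies precisely in this final limit, where the kernel concentrates as a Dirac mass at $y$; the rescaling into a fixed integrable profile is the key technical move that makes the exchange of limit and integral rigorous.
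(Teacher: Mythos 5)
Your proof is correct and follows essentially the same route as the paper: differentiate under the integral, isolate the singular Riesz-type kernel, rescale by $\eta = y + tz$ to produce a fixed integrable profile, and identify the limit with $q_\lambda(y)$ via the normalization of $N_{n,\sigma}$, while the reflected part contributes nothing because $\eta^\lambda$ stays uniformly away from $y$. The only stylistic difference is in how the limit is extracted: the paper splits the integration domain into a small ball $B_r(y)$ and its complement, writes $q_\lambda(\eta) = (q_\lambda(\eta) - q_\lambda(y)) + q_\lambda(y)$, bounds the error term by $\| q_\lambda - q_\lambda(y)\|_{L^\infty(B_r(y))}$, and then sends $r \to 0$; you instead keep the integral over all of $\R^n$ (with an indicator) after rescaling and apply dominated convergence once, using boundedness of $q_\lambda$ on the compact set $\overline{E}\setminus B_\lambda$. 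Both are valid, and yours is marginally tidier since it avoids the iterated limit; the substance is identical.
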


\begin{proof} Part (i) follows from Lemma \ref{G1} (ii). The first identity of (ii) follows from Lemma \ref{G1} and Lebesgue's dominated convergence theorem. Now we prove the second identity of (ii). For any $y \in E \setminus \overline{B_\lambda}$, there exists $\delta > 0$ such that $B_\delta (y) \subset E \setminus \overline{B_\lambda}$. Then, for any $0 < r < \delta$,
$$
\aligned
& - t^{1 - 2 \sigma} \partial_t \Phi_\lambda (Y) \\
= & ~ (n - 2 \sigma) N_{n, \sigma} \int_{B_r (y)} t^{2 - 2 \sigma} |Y - \eta|^{2 \sigma - n - 2} q_\lambda (\eta) d \eta \\
& + \bigg[ (n - 2 \sigma) N_{n, \sigma} \int_{E \setminus ( B_\lambda \cup B_r (y) )} t^{2 - 2 \sigma} |Y - \eta|^{2 \sigma - n - 2} q_\lambda (\eta) d \eta \\
& - (n - 2 \sigma) N_{n, \sigma} \int_{E \setminus B_\lambda} t^{2 - 2 \sigma} \bigg( \frac{\lambda}{|\eta|} \bigg)^{n - 2 \sigma} |Y - \eta^\lambda|^{2 \sigma - n - 2} q_\lambda (\eta) d \eta \bigg] \\
= : & ~ I_1 + I_2.
\endaligned
$$
Here we have
$$
\aligned
I_1 = & ~ (n - 2 \sigma) N_{n, \sigma} \int_{B_r (y)} t^{2 - 2 \sigma} |Y - \eta|^{2 \sigma - n - 2} ( q_\lambda (\eta) - q_\lambda (y) ) d \eta \\ 
& + (n - 2 \sigma) N_{n, \sigma} \int_{B_r (y)} t^{2 - 2 \sigma} |Y - \eta|^{2 \sigma - n - 2} q_\lambda (y) d \eta \\
= & ~ (n - 2 \sigma) N_{n, \sigma} \int_{B_r (y)} t^{2 - 2 \sigma} |Y - \eta|^{2 \sigma - n - 2} ( q_\lambda (\eta) - q_\lambda (y) ) d \eta \\
& + (n - 2 \sigma) N_{n, \sigma} q_\lambda (y) \int_{ B_{r/t} } (1 + |z|^2)^{(2 \sigma - n - 2)/2} dz \\
= & ~ I_{11} + I_{12},
\endaligned
$$
where $|I_{11}| \leq \| q_\lambda - q_\lambda (y) \|_{L^\infty ( B_r (y) )}$ and $\lim_{t \to 0^+} I_{12} = q_\lambda (y)$. We also have $\lim_{t \to 0^+} |I_{2}| = 0$. Consequently, we obtain
$$
\limsup\limits_{t \to 0^+} | - t^{1 - 2 \sigma} \partial_t \Phi_\lambda (Y) - q_\lambda (y) | \leq \| q_\lambda - q_\lambda (y) \|_{L^\infty ( B_r (y) )}.
$$
Since $q_\lambda$ is continuous at $y$, sending $r \to 0^+$, we can get the desired result. 
\end{proof}

Here we state an estimate to  $G_\lambda(Y, \eta)$ whose proof is elementary and so is omitted.

\begin{lemma}\label{G3} For $\lambda < |Y| \leq 10 \lambda$ and $|\eta| > \lambda$, there exists $C > 0$ depending only on $n$ and $\sigma$ such that
\begin{equation}\label{C1GC2}
G_\lambda (Y, \eta) \leq C \frac{ (|Y| - \lambda) (|\eta|^2 - \lambda^2) }{ \lambda |Y - \eta|^{n - 2 \sigma + 2} }.
\end{equation}
\end{lemma}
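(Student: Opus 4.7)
\textbf{Proof plan for Lemma \ref{G3}.} The plan is to reduce the estimate to a straightforward mean-value-type bound by exploiting the classical algebraic identity that governs the Kelvin transform of the Riesz kernel. Concretely, I would first verify by direct expansion that
\[
|\eta|^2 |Y - \eta^\lambda|^2 \; - \; \lambda^2 |Y - \eta|^2 \;=\; (|Y|^2 - \lambda^2)(|\eta|^2 - \lambda^2),
\]
using $\eta^\lambda = \lambda^2 \eta / |\eta|^2$. This is the key identity; it both explains why $G_\lambda \geq 0$ outside $\mathcal{B}_\lambda^+$ and furnishes the quantity $(|Y|^2 - \lambda^2)(|\eta|^2 - \lambda^2)$ that must appear in the upper bound.

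Next I would set $A = |Y - \eta|$ and $B = (|\eta|/\lambda)\, |Y - \eta^\lambda|$, so that the identity above reads $B^2 - A^2 = (|Y|^2 - \lambda^2)(|\eta|^2 - \lambda^2)/\lambda^2$, and rewrite the Green-type kernel \eqref{Green} as
\[
G_\lambda(Y, \eta) \;=\; N_{n, \sigma} \left( \frac{1}{A^{n - 2 \sigma}} - \frac{1}{B^{n - 2 \sigma}} \right).
\]
Since $B \geq A$, the fundamental theorem of calculus applied to $s \mapsto s^{2\sigma - n}$ gives
\[
\frac{1}{A^{n - 2 \sigma}} - \frac{1}{B^{n - 2 \sigma}} \;=\; (n - 2\sigma) \int_A^B \frac{ds}{s^{n - 2\sigma + 1}} \;\leq\; \frac{n - 2\sigma}{A^{n - 2\sigma + 1}} (B - A),
\]
and I would then use the elementary bound $B - A \leq (B^2 - A^2)/A$ (which follows from $A + B \geq A$) to obtain
\[
G_\lambda(Y, \eta) \;\leq\; \frac{C\, (|Y|^2 - \lambda^2)(|\eta|^2 - \lambda^2)}{\lambda^2\, |Y - \eta|^{n - 2\sigma + 2}}.
\]

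Finally, I would invoke the hypothesis $|Y| \leq 10\lambda$ to write $|Y|^2 - \lambda^2 = (|Y| + \lambda)(|Y| - \lambda) \leq 11\lambda (|Y| - \lambda)$, which absorbs one factor of $\lambda$ in the denominator and produces the claimed inequality \eqref{C1GC2}. There is no real obstacle: the entire argument is algebraic plus a one-line mean-value step, and the restriction $|Y| \leq 10\lambda$ is used only at this last line to convert $(|Y|^2 - \lambda^2)$ to $\lambda(|Y| - \lambda)$ with a universal constant.
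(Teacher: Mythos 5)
Your proof is correct and complete. The key identity $|\eta|^2|Y - \eta^\lambda|^2 - \lambda^2|Y - \eta|^2 = (|Y|^2 - \lambda^2)(|\eta|^2 - \lambda^2)$ checks out by direct expansion (the cross terms $-2\lambda^2\, Y\cdot\eta$ cancel), the mean-value bound $A^{2\sigma-n} - B^{2\sigma-n} \leq (n-2\sigma)(B-A)A^{2\sigma-n-1}$ is valid since $n - 2\sigma > 0$ and $B \geq A$, and the step $B - A \leq (B^2 - A^2)/A$ is elementary. The paper omits the proof of Lemma~\ref{G3} as ``elementary,'' and what you have written is exactly the natural elementary argument that the authors had in mind; in particular it also makes explicit where the hypothesis $|Y| \leq 10\lambda$ enters (only to convert $|Y|^2 - \lambda^2$ into $\lambda(|Y| - \lambda)$ with a universal constant), and it simultaneously explains the positivity claim in Lemma~\ref{G1}(i).
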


Finally, we also need the following maximum principle whose proof can be found in \cite{JDSX}.

\begin{lemma}[\cite{JDSX}]\label{MAXPRI} Suppose $U \in W^{1, 2} (t^{1 - 2 \sigma}, \mathcal{B}_1^+ \setminus \overline{\mathcal{B}_\varepsilon^+}) \cap C(\overline{\mathcal{B}_1^+} \setminus \{ 0 \})$ for any $0 < \varepsilon < 1$ and
$$
\liminf\limits_{Y \to 0} U(Y) > - \infty.
$$
Suppose $U$ solves
$$
\left\{
\aligned
{\rm div} (t^{1 - 2 \sigma} \nabla U) & \leq 0 & \textmd{in} & ~ \mathcal{B}_1^+, \\
\frac{\partial U}{\partial \nu^\sigma} & \geq 0 ~~~~~~ & \textmd{on} & ~ \partial' \mathcal{B}_1^+ \setminus \{ 0 \},
\endaligned
\right.
$$
in the weak sense. Then
$$
U(Y) \geq \inf\limits_{\partial'' \mathcal{B}_1^+} U ~~~~~~ \textmd{for all} ~ Y \in \overline{\mathcal{B}_1^+} \setminus \{ 0 \}.
$$
\end{lemma}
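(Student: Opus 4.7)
The plan is to run a Caccioppoli-type argument using $(U-m)^-$ as a test function, truncated by a cutoff around the isolated singularity at the origin, and then exploit the $\liminf > -\infty$ hypothesis together with the integrability of the weight $t^{1-2\sigma}$ to kill the cutoff error. Set $m := \inf_{\partial'' \mathcal{B}_1^+} U$, which is finite since $U \in C(\overline{\mathcal{B}_1^+} \setminus \{0\})$, and let $W := (U - m)^-$. Then $W \geq 0$, $W$ lies in the same weighted Sobolev class as $U$ on every shell $\mathcal{B}_1^+ \setminus \overline{\mathcal{B}_\varepsilon^+}$, $W = 0$ on $\partial'' \mathcal{B}_1^+$, and $W$ is bounded on a neighborhood of $0$ by the $\liminf$ assumption. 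The goal is to show $W \equiv 0$.

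Pick a cutoff $\eta_\varepsilon \in C_c^\infty(\R^{n+1})$ with $0 \leq \eta_\varepsilon \leq 1$, $\eta_\varepsilon \equiv 0$ on $\mathcal{B}_\varepsilon$, $\eta_\varepsilon \equiv 1$ off $\mathcal{B}_{2\varepsilon}$, and $|\nabla \eta_\varepsilon| \leq C/\varepsilon$, and use $\Psi := W \eta_\varepsilon^2$ as a nonnegative test function (admissible after standard approximation, since it has compact support in $(\mathcal{B}_1^+ \cup \partial' \mathcal{B}_1^+) \setminus \{0\}$ and vanishes on $\partial'' \mathcal{B}_1^+$). Using $\nabla W = -\chi_{\{U < m\}} \nabla U$, so $\nabla U \cdot \nabla W = -|\nabla W|^2$ and $W \nabla U = -W \nabla W$, the weak formulation of $\operatorname{div}(t^{1-2\sigma}\nabla U) \leq 0$ with $\partial U/\partial \nu^\sigma \geq 0$ yields
$$
0 \leq \int_{\mathcal{B}_1^+} t^{1-2\sigma} \nabla U \cdot \nabla \Psi = -\int_{\mathcal{B}_1^+} t^{1-2\sigma} |\nabla W|^2 \eta_\varepsilon^2 - 2 \int_{\mathcal{B}_1^+} t^{1-2\sigma} W \eta_\varepsilon \nabla W \cdot \nabla \eta_\varepsilon,
$$
and Cauchy--Schwarz--Young then gives the Caccioppoli inequality
$$
\int_{\mathcal{B}_1^+} t^{1-2\sigma} |\nabla W|^2 \eta_\varepsilon^2 \leq 4 \int_{\mathcal{B}_1^+} t^{1-2\sigma} W^2 |\nabla \eta_\varepsilon|^2.
$$

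To close the argument, I would estimate the right-hand side using $W \leq C_0$ near $0$ and $|\nabla \eta_\varepsilon| \leq C/\varepsilon$ supported in $\mathcal{B}_{2\varepsilon}^+ \setminus \mathcal{B}_\varepsilon^+$:
$$
4 \int_{\mathcal{B}_1^+} t^{1-2\sigma} W^2 |\nabla \eta_\varepsilon|^2 \leq \frac{C}{\varepsilon^2} \int_{\mathcal{B}_{2\varepsilon}^+} t^{1-2\sigma}\, dX \leq C \varepsilon^{n - 2\sigma} \longrightarrow 0
$$
as $\varepsilon \to 0^+$, since $n - 2\sigma > 0$. Monotone convergence then forces $\int_{\mathcal{B}_1^+ \setminus \{0\}} t^{1-2\sigma} |\nabla W|^2 = 0$, so $\nabla W = 0$ a.e.~on the connected set $\mathcal{B}_1^+ \setminus \{0\}$. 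Continuity of $W$ up to $\partial'' \mathcal{B}_1^+$, where $W = 0$, forces the resulting constant to be $0$, i.e.~$U \geq m$ throughout $\overline{\mathcal{B}_1^+} \setminus \{0\}$.

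\textbf{Main obstacle.} The delicate point is passing to the limit $\varepsilon \to 0$: the $\liminf_{Y \to 0} U(Y) > -\infty$ hypothesis is essential here, because without an a priori lower bound near $0$ the cutoff error $\int t^{1-2\sigma} W^2 |\nabla \eta_\varepsilon|^2$ need not vanish (and indeed the singular fundamental solution behaving like $|Y|^{2\sigma-n}$ shows that some control near the isolated singularity is unavoidable). A secondary subtlety is that $W \eta_\varepsilon^2$ is only Lipschitz, so one has to justify its use as a test function by density in the weighted Sobolev space on each shell $\mathcal{B}_1^+ \setminus \overline{\mathcal{B}_\varepsilon^+}$; this is routine because both $W$ and $\eta_\varepsilon$ have the required local regularity.
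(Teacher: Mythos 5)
The paper itself does not reproduce a proof of this lemma; it is cited from \cite{JDSX}. The usual route there is a barrier comparison: the function $|Y|^{2\sigma-n}$ satisfies ${\rm div}(t^{1-2\sigma}\nabla |Y|^{2\sigma-n})=0$ in $\R_+^{n+1}$ and has vanishing conormal derivative on $\partial'\R_+^{n+1}\setminus\{0\}$, so one applies the shell maximum principle to $U+\delta(|Y|^{2\sigma-n}-1)$ and lets $\delta\to 0$. Your proof takes the other classical route, a Caccioppoli/capacity estimate: the isolated point has zero capacity for the weight $t^{1-2\sigma}$, which is exactly the content of $\varepsilon^{-2}\int_{\mathcal{B}_{2\varepsilon}^+}t^{1-2\sigma}\,dX\sim\varepsilon^{n-2\sigma}\to 0$. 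This is self-contained and avoids verifying the boundary behavior of a barrier; the algebra (sign of the test function, $\nabla U\cdot\nabla W=-|\nabla W|^2$, the constant $4$ from Young, the power $n-2\sigma$) is all correct.

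There is, however, one misstatement that needs repair. You claim $\Psi=W\eta_\varepsilon^2$ ``has compact support in $(\mathcal{B}_1^+\cup\partial'\mathcal{B}_1^+)\setminus\{0\}$.'' That is false: $W$ vanishes only pointwise on $\partial''\mathcal{B}_1^+$, not on a neighborhood, so $\operatorname{supp}\Psi$ in general reaches $\partial''\mathcal{B}_1^+$ and $\Psi$ is not an admissible test function as written. The ``routine approximation'' you invoke addresses smoothness of a Lipschitz function, not this lack of compact support, so as stated the argument has a gap at the very step where the weak inequality is applied. The standard fix is small: replace $W$ by $W_\delta:=(U-m+\delta)^-$ for $\delta>0$. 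Since $U$ is continuous up to $\partial''\mathcal{B}_1^+$ with $U\geq m$ there, the relatively open set $\{U>m-\delta\}$ contains $\partial''\mathcal{B}_1^+$, hence $W_\delta$ vanishes on a relative neighborhood of $\partial''\mathcal{B}_1^+$, and $W_\delta\eta_\varepsilon^2$ is genuinely compactly supported in $(\mathcal{B}_1^+\cup\partial'\mathcal{B}_1^+)\setminus\{0\}$. Your Caccioppoli computation then runs verbatim (with $W_\delta$ still bounded near $0$ by the $\liminf$ hypothesis), giving $W_\delta\equiv 0$, i.e.\ $U\geq m-\delta$, and letting $\delta\to 0$ yields the claim. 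Alternatively, one can justify $W\eta_\varepsilon^2$ by appealing to density of $C_c^\infty$ in the zero-trace weighted Sobolev class on the shell, but that deserves a sentence rather than being dismissed. With this modification the proof is correct.
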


\section{Local estimates in lower dimensions}\label{Sect3}

In this section, we shall prove Theorem \ref{Pri} using the moving sphere method introduced by Li-Zhu \cite{LZ-95}. 
 
\vskip0.10in  

\noindent {\bf Proof of Theorem \ref{Pri}.} Suppose by contradiction that there exists a sequence $\{ x_j \}_{j = 1}^\infty \subset B_1 \setminus \{ 0 \}$ such that
$$
|x_j| \to 0 ~~~~~~ \textmd{as} ~ j \to \infty,
$$
but
$$
|x_j|^\frac{n - 2 \sigma}{2} u(x_j) \to \infty ~~~~~~ \textmd{as} ~ j \to \infty.
$$

\vskip0.1in

{\it Step 1. We claim that $\{ x_j \}_{j = 1}^\infty$ can be chosen as the local maximum points of $u$.}

Consider
$$
f_j (x) : = \bigg( \frac{|x_j|}{2} - |x - x_j| \bigg)^\frac{n - 2 \sigma}{2} u(x) ~~~~~~ \textmd{for} ~ |x - x_j| \leq \frac{|x_j|}{2}.
$$
Since $u$ is positive and continuous in $\overline{B_{|x_j|/2} (x_j)}$, we can find a point $\bar{x}_j \in \overline{B_{|x_j|/2} (x_j)}$ such that
$$
f_j (\bar{x}_j) = \max\limits_{|x - x_j| \leq |x_j|/2} f_j (x) > 0.
$$
Let $2 \mu_j : = |x_j|/2 - |\bar{x}_j - x_j|$, then
$$
0 < 2 \mu_j \leq \frac{|x_j|}{2} ~~~~~~ \textmd{and} ~~~~~~ \frac{|x_j|}{2} - |x - x_j| \geq \mu_j ~~~ \forall ~ |x - \bar{x}_j| \leq \mu_j.
$$
By the definition of $f_j$, we have
$$
(2 \mu_j)^\frac{n - 2 \sigma}{2} u(\bar{x}_j) = f_j (\bar{x}_j) \geq f_j (x) \geq (\mu_j)^\frac{n - 2 \sigma}{2} u(x) ~~~~~~ \forall ~ |x - \bar{x}_j| \leq \mu_j.
$$
Hence we have
\begin{equation}\label{2uj-u}
2^\frac{n - 2 \sigma}{2} u(\bar{x}_j) \geq u(x) ~~~~~~ \forall ~ |x - \bar{x}_j| \leq \mu_j.
\end{equation}
We also have
\begin{equation}\label{2mujuj}
(2 \mu_j)^\frac{n - 2 \sigma}{2} u(\bar{x}_j) = f_j (\bar{x}_j) \geq f_j (x_j) = \left( \frac{|x_j|}{2} \right)^\frac{n - 2 \sigma}{2} u(x_j) \to \infty ~~~~~~ \textmd{as} ~ j \to \infty.
\end{equation}
Now we define
$$
\overline{W}_j (y, t) = \frac{1}{u(\bar{x}_j)} U \left( \bar{x}_j + \frac{y}{ u(\bar{x}_j)^\frac{2}{n - 2 \sigma} }, \frac{t}{ u(\bar{x}_j)^\frac{2}{n - 2 \sigma} } \right) ~~~~~~ \textmd{for} ~ (y, t) \in \Xi_j,
$$
where
$$
\Xi_j : = \left\{ (y, t) \in \R_+^{n + 1} : \left( \bar{x}_j + \frac{y}{ u(\bar{x}_j)^\frac{2}{n - 2 \sigma} }, \frac{t}{ u(\bar{x}_j)^\frac{2}{n - 2 \sigma} } \right) \in \mathcal{B}_1^+ \right\}.
$$
Let $\overline{w}_j (y) : = \overline{W}_j (y, 0)$, then $\overline{W}_j$ satisfies $\overline{w}_j (0) = 1$ and
$$
\left\{
\aligned
{\rm div} (t^{1 - 2 \sigma} \nabla \overline{W}_j) & = 0 & \textmd{in} & ~ \Xi_j, \\
\frac{\partial \overline{W}_j}{\partial \nu^\sigma} (y, 0) & = K \Big( \bar{x}_j + u(\bar{x}_j)^{ - \frac{2}{n - 2 \sigma} } y \Big) \overline{w}_j^\frac{n + 2 \sigma}{n - 2 \sigma} (y) ~~~~~~ & \textmd{on} & ~ \partial' \Xi_j \setminus \{ - u(\bar{x}_j)^\frac{2}{n - 2 \sigma} \bar{x}_j \}.
\endaligned
\right.
$$
Moreover, it follows from \eqref{2uj-u} and \eqref{2mujuj} that
$$
\overline{w}_j (y) \leq 2^\frac{n - 2 \sigma}{2} ~~~~~~ \textmd{in} ~ B_{R_j},
$$
where
\begin{equation}\label{Rj} 
R_j : =  \mu_j u(\bar{x}_j)^\frac{2}{n - 2 \sigma} \to \infty ~~~~~~ \textmd{as} ~ j \to \infty.
\end{equation}

By \cite[Proposition 2.6 ]{JLX-14},  for any given $\bar{t} > 0$ we have
$$
0 \leq W_j \leq C(\bar{t}) ~~~~~~ \textmd{in} ~ B_{R_j/2} \times [0, \bar{t}),
$$
where $C(\bar{t})$ depends only on $n$, $\sigma$, $\|K\|_{L^\infty(B_1)}$ and $\bar{t}$. Since $1/2 \leq \sigma < 1$, by the regularity results in \cite{Cabre,JLX-14},  we have that,   after passing to a subsequence, for some non-negative function $\overline{W} \in W_{loc}^{1, 2} (t^{1 - 2 \sigma}, \overline{\R_+^{n + 1}}) \cap C_{loc}^1 (\overline{\R_+^{n + 1}})$   
$$
\left\{
\aligned
\overline{W}_j & \rightharpoonup \overline{W} ~~~~~~ && \textmd{weakly in} ~ W_{loc}^{1, 2} (t^{1 - 2 \sigma}, \R_+^{n + 1}), \\
\overline{W}_j & \to \overline{W} && \textmd{in} ~ C_{loc}^1 (\overline{\R_+^{n + 1}}). 
\endaligned
\right.
$$
Denote $\overline{w} (y) : = \overline{W} (y, 0)$. Moreover, $\overline{W}$ satisfies
$$
\left\{
\aligned
{\rm div} (t^{1 - 2 \sigma} \nabla \overline{W}) & = 0 & \textmd{in} & ~ \R_+^{n + 1}, \\
\frac{\partial \overline{W}}{\partial \nu^\sigma} & = K(0) \overline{w}^\frac{n + 2 \sigma}{n - 2 \sigma} ~~~~~~ & \textmd{on} & ~ \partial \R_+^{n + 1},
\endaligned
\right.
$$
and $\overline{w} (0) = 1$. Without loss of generality, we may assume $K(0) = \widetilde{C}_{n, \sigma}$ which is defined in \eqref{Cns}.  By the Liouville theorem in \cite{JLX-14}, we have
$$
\overline{w} (y) = \left( \frac{\mu}{1 + \mu^2 |y - y_0|^2} \right)^\frac{n - 2 \sigma}{2},
$$
for some $y_0 \in \R^n$ and $\mu \geq 1$. Hence $\overline{w}$ has an absolute maximum value $\mu^\frac{n - 2 \sigma}{2}$ at $y_0$. It implies that $\overline{w}_j$ has a local maximum at a point $y_j$ close to $y_0$ and $\overline{w}_j (y_j) \geq \mu^\frac{n - 2 \sigma}{2}/2$ when $j$ is large. Let 
$$
\tilde{x}_j : = \bar{x}_j + u(\bar{x}_j)^{ - \frac{2}{n - 2 \sigma} } y_j,
$$
then $\{ \tilde{x}_j \}_{j = 1}^\infty$ are local maximum points of $u$ for large $j$ and
$$
u( \tilde{x}_j ) = u( \bar{x}_j ) \overline{w}_j (y_j) \geq \frac{ \mu^\frac{n - 2 \sigma}{2} }{2} u(\bar{x}_j).
$$
It follows from \eqref{Rj} that $|\tilde{x}_j - \bar{x}_j| = u(\bar{x}_j)^{ - \frac{2}{n - 2 \sigma} } |y_j| < \mu_j$ when $j$ is large, then $\tilde{x}_j \in B_{\mu_j} (\bar{x}_j) \subset B_{|x_j|/2} (x_j)$. Thus we have $|\tilde{x}_j| \geq |x_j|/2 \geq 2 \mu_j$ when $j$ is large. By \eqref{Rj} we can obtain
$$
|\tilde{x}_j|^\frac{n - 2 \sigma}{2} u(\tilde{x}_j) \geq \frac{ (2 \mu)^\frac{n - 2 \sigma}{2} }{2} (\mu_j)^\frac{n - 2 \sigma}{2} u(\bar{x}_j) \to \infty ~~~~~~ \textmd{as} ~ j \to \infty.
$$

Using the same arguments as before,  after passing to a subsequence we can prove that the function
$$
W_j (y, t) : = \frac{1}{u(\tilde{x}_j)} U \left( \tilde{x}_j + \frac{y}{ u(\tilde{x}_j)^\frac{2}{n - 2 \sigma} }, \frac{t}{ u(\tilde{x}_j)^\frac{2}{n - 2 \sigma} } \right)
$$
converges to $\widetilde{W}$ in $C_{loc}^1 (\overline{\R_+^{n + 1}} )$ and weakly in $W_{loc}^{1, 2} (t^{1 - 2 \sigma}, \R_+^{n + 1})$ which solves
$$
\left\{
\aligned
{\rm div} (t^{1 - 2 \sigma} \nabla \widetilde{W}) & = 0 & \textmd{in} & ~ \R_+^{n + 1}, \\
\frac{\partial \widetilde{W}}{\partial \nu^\sigma} & = K(0) \widetilde{w}^\frac{n + 2 \sigma}{n - 2 \sigma} ~~~~~~ & \textmd{on} & ~ \partial \R_+^{n + 1}, \\
\max_{\R^n} \widetilde{w} & = \widetilde{w} (0) = 1,
\endaligned
\right.
$$
where $\widetilde{w} (y) : = \widetilde{W} (y, 0)$. By the Liouville theorem in \cite{JLX-14},
$$
\widetilde{w} (y) = \left( \frac{1}{1 + |y|^2} \right)^\frac{n - 2 \sigma}{2} ~~~~ \textmd{and}  ~~~~ \widetilde{W} = \mathcal{P}_\sigma [\widetilde{w}], 
$$
where $\mathcal{P}_\sigma$ is the Poisson kernel given in \eqref{Poss09}.  From now on we consider $x_j$ as $\tilde{x}_j$. The claim is proved.

\vskip0.1in

{\it Step 2. We give some estimates for the difference between $W_j$ and its Kelvin transformation. }

Let $L_j : = u(x_j)$ and
$$
\Omega_j : = \left\{ (y, t) \in \R_+^{n + 1} : \Big( x_j + L_j^{ - \frac{2}{n - 2 \sigma} } y, L_j^{ - \frac{2}{n - 2 \sigma} } t \Big) \in \mathcal{B}_1^+ \right\}.
$$
Then for large $j$, we know that $\mathcal{B}_5^+ \subset \Omega_j$. For $\lambda \in [1/2, 2]$, let
$$
W_j^\lambda (Y) : = \left( \frac{\lambda}{|Y|} \right)^{n - 2 \sigma} W_j \left( \frac{\lambda^2 Y}{|Y|^2} \right) ~~~~~~ \textmd{for} ~ Y \in \overline{\Omega_j} \setminus \mathcal{B}_\lambda^+.
$$

Now we define $W_\lambda (Y) : = W_j (Y) - W_j^\lambda (Y)$ and $w_\lambda (y) : = W_\lambda (y, 0)$. Here we omit $j$ for simplicity.  We also denote 
$$
K_j (y) : = K \Big( x_j + L_j^{ - \frac{2}{n - 2 \sigma} } y \Big). 
$$
Then
\begin{equation}\label{WLE}
\left\{
\aligned
{\rm div} (t^{1 - 2 \sigma} \nabla W_\lambda) & = 0 & \textmd{in} & ~ \Omega_j \setminus \overline{\mathcal{B}_\lambda^+}, \\
\frac{\partial W_\lambda}{\partial \nu^\sigma} & = b_\lambda w_\lambda - q_\lambda ~~~~~~ & \textmd{on} & ~ \partial' (\Omega_j \setminus \mathcal{B}_\lambda^+) \setminus \{ - L_j^\frac{2}{n - 2 \sigma} x_j \},
\endaligned
\right.
\end{equation}
where
\begin{equation}\label{bLP}
b_\lambda (y) : = K_j (y) \frac{ w_j (y)^\frac{n + 2 \sigma}{n - 2 \sigma} - w_j^\lambda (y)^\frac{n + 2 \sigma}{n - 2 \sigma} }{ w_j (y) - w_j^\lambda (y) } \geq 0 
\end{equation}
and
\begin{equation}\label{qL1}
q_\lambda (y) : = ( K_j (y^\lambda) - K_j (y) ) w_j^\lambda (y)^\frac{n + 2 \sigma}{n - 2 \sigma}.
\end{equation}
Notice that $q_\lambda \in C(\overline{\partial' \Omega_j} \setminus B_\lambda)$. Moreover, we have the following estimates for the difference between $W_j$ and  $ W_j^{\lambda}$ when $\lambda=1/2$ or $2$.

\begin{lemma}\label{W-WL} 
Let $\lambda_0 = 1/2$ and $\lambda_1 = 2$.  There exists $c_0 > 0$ and $j_0 > 1$ such that for all $j \geq j_0$,
\begin{equation}\label{W-WL0}
W_j (Y) - W_j^{\lambda_0} (Y) \geq c_0 (|Y| - \lambda_0) |Y|^{2 \sigma - n - 1} + c_0 L_j^{- 1} (\lambda_0^{2 \sigma - n} - |Y|^{2 \sigma - n})
\end{equation}
for all $Y \in \Omega_j \setminus \mathcal{B}_{\lambda_0}^+$. Moreover, there exists $Y^* \in \overline{\mathcal{B}_{2 \lambda_1}^+} \setminus \overline{\mathcal{B}_{\lambda_1}^+}$ such that for all $j \geq j_0$,
\begin{equation}\label{W-WL1}
W_j (Y^*) - W_j^{\lambda_1} (Y^*) \leq - c_0.
\end{equation}
\end{lemma}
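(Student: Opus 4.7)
The two estimates are handled by different arguments. Estimate \eqref{W-WL1} follows from the $C^1_{loc}$ convergence $W_j \to \widetilde{W}$ combined with \eqref{B-BL3} and is essentially a one-line reduction. Estimate \eqref{W-WL0} is the substantive one; it will be proved by decomposing the exterior region $\Omega_j \setminus \mathcal{B}_{\lambda_0}^+$ into a fixed compact annulus, where uniform $C^1$ convergence plus \eqref{B-BL1}--\eqref{B-BL2} apply, and an unbounded far piece, where the Green's representation of Lemma \ref{G2}, the Green's-function size estimate of Lemma \ref{G3}, and the maximum principle of Lemma \ref{MAXPRI} combine.

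For \eqref{W-WL1}, pick any $Y^* \in \overline{\mathcal{B}_{2 \lambda_1}^+} \setminus \overline{\mathcal{B}_{\lambda_1}^+}$ (say with $|Y^*| = 3 \lambda_1/2$). Since $Y^*$ and its Kelvin image $\lambda_1^2 Y^*/|Y^*|^2$ both lie in a fixed compact subset of $\overline{\R_+^{n + 1}} \setminus \{ 0 \}$, the uniform $C^1$ convergence on compact sets gives $W_j (Y^*) - W_j^{\lambda_1} (Y^*) \to \widetilde{W} (Y^*) - \widetilde{W}^{\lambda_1} (Y^*)$, and the limit is strictly negative by \eqref{B-BL3}. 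Taking $c_0$ to be half its absolute value and $j_0$ large enough yields \eqref{W-WL1}.

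For \eqref{W-WL0}, fix $R = 10 \lambda_0$. On the compact annulus $A_R := \overline{\mathcal{B}_R^+} \setminus \mathcal{B}_{\lambda_0}^+$, uniform $C^1$ convergence $W_j \to \widetilde{W}$ implies $W_j^{\lambda_0} \to \widetilde{W}^{\lambda_0}$ uniformly on $A_R$. Combining \eqref{B-BL1} (a pointwise lower bound away from $\partial'' \mathcal{B}_{\lambda_0}^+$) with the Hopf-type derivative bound \eqref{B-BL2} (which, combined with the vanishing of $\widetilde{W} - \widetilde{W}^{\lambda_0}$ on $\partial'' \mathcal{B}_{\lambda_0}^+$, supplies the linear factor $|Y| - \lambda_0$ near the inner sphere) gives $\widetilde{W} - \widetilde{W}^{\lambda_0} \geq 2 c_1 (|Y| - \lambda_0) |Y|^{2 \sigma - n - 1}$ on $A_R$. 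Uniform convergence then upgrades this to $W_j - W_j^{\lambda_0} \geq c_1 (|Y| - \lambda_0) |Y|^{2 \sigma - n - 1}$ on $A_R$ for $j$ large; since the second term on the right of \eqref{W-WL0} is $O(L_j^{-1})$ on $A_R$, it is absorbed by choosing $c_0 \leq c_1/2$ and taking $j$ large. On the far region $\Omega_j \setminus \mathcal{B}_R^+$, represent $W_j - W_j^{\lambda_0}$ via Lemma \ref{G2}: as the Green's convolution against the Neumann data $b_\lambda w_\lambda - q_\lambda$ plus a weighted-harmonic correction determined by its boundary values on $\partial'' \mathcal{B}_{\lambda_0}^+$ and $\partial'' \Omega_j$. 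On $\partial'' \Omega_j$, the lower bound $W_j \geq c L_j^{-1}$ follows from positivity and continuity of $U$ on the compact set $\partial'' \mathcal{B}_1^+$, while $W_j^{\lambda_0}$ is of smaller order, so the harmonic correction contributes at least a positive multiple of $L_j^{-1} (\lambda_0^{2 \sigma - n} - |Y|^{2 \sigma - n})$. The Green's convolution against $b_\lambda w_\lambda \geq 0$ (see \eqref{bLP}) contributes positively, while the portion coming from $-q_\lambda$ is dominated using Lemma \ref{G3} and the $\mathcal{C}^\alpha$-regularity of $K$ with $\alpha = (n - 2 \sigma)/2$. A final application of Lemma \ref{MAXPRI} to the difference between $W_j - W_j^{\lambda_0}$ and the proposed lower bound completes the argument.

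\textbf{Main obstacle.} The principal difficulty is the sign-free Neumann source $q_\lambda$, which can be either positive or negative. The $\mathcal{C}^\alpha$-regularity of $K$ with $\alpha = (n - 2 \sigma)/2$, together with the extra assumption $\nabla K(0) = 0$ in the borderline case $\sigma = 1/2$, $n = 3$, is calibrated precisely so that the Green's convolution of $|q_\lambda|$ is $O(L_j^{-1})$ uniformly in $Y$, matching the order of the second term on the right of \eqref{W-WL0}. Lemma \ref{G3} provides the crucial size estimate \eqref{C1GC2} making this absorption work; the transition from the pointwise bound on $\widetilde{W} - \widetilde{W}^{\lambda_0}$ at the $\widetilde{W}$-level (which has no $L_j^{-1}$ term) to the finite-$j$ bound \eqref{W-WL0} is exactly the $L_j^{-1}$-scale correction arising from the far-field behavior of $U$.
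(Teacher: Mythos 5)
Your treatment of \eqref{W-WL1}, and of \eqref{W-WL0} on the compact annulus $\overline{\mathcal{B}_R^+} \setminus \mathcal{B}_{\lambda_0}^+$, is correct and coincides with the paper's: $C^1_{loc}$ convergence of $W_j$ to $\widetilde{W}$ combined with Lemma \ref{B-BL} (both the interior bound \eqref{B-BL1} and the normal-derivative bound \eqref{B-BL2}, the latter genuinely needed near $\partial'' \mathcal{B}_{\lambda_0}^+$ where both sides of the inequality vanish).

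The far region $\Omega_j \setminus \mathcal{B}_R^+$ is where your argument has a genuine gap. On that region the first term of \eqref{W-WL0} is comparable to $|Y|^{2\sigma - n}$, i.e.\ its coefficient is of order one, independent of $j$. Your far-field discussion only accounts for the $O(L_j^{-1})$ correction term (via the boundary values $W_j \geq c L_j^{-1}$ on $\partial'' \Omega_j$) and nowhere produces this order-one contribution: the assertion that the harmonic correction contributes a positive multiple of $L_j^{-1} (\lambda_0^{2\sigma - n} - |Y|^{2\sigma - n})$ is the easy half, while the hard half is a $j$-independent positive gap between $W_j$ and $W_j^{\lambda_0}$ for large $|Y|$, and that is missing from the sketch. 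Moreover, the machinery you invoke there is misplaced: $q_\lambda$, $\Phi_\lambda$, Lemma \ref{G3} and the $\mathcal{C}^\alpha$ hypothesis on $K$ play no role in Lemma \ref{W-WL} (where $\lambda$ is frozen at $1/2$ or $2$); they enter only in Steps 3--4 of the moving-sphere argument. So your "Main obstacle" paragraph describes the obstacle of the theorem, not of this lemma.

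The paper's far-region proof is more elementary. Conformal invariance of $\widetilde{W}$ gives $\widetilde{W}(Y) \geq (1 - c_1/2) |Y|^{2\sigma - n}$ and $\widetilde{W}^{\lambda_0}(Y) \leq (1 - 3 c_1) |Y|^{2\sigma - n}$ for $|Y| \geq R$. The first bound transfers to $W_j$ on the sphere $|Y| = R$ by local convergence and is then propagated to all of $\Omega_j \setminus \mathcal{B}_R^+$ by the maximum principle of Lemma \ref{MAXPRI}, using that the comparison function $|Y|^{2\sigma - n}$ is degenerate-harmonic with nonpositive Neumann data and that $|Y|^{n - 2\sigma} W_j (Y) \to \infty$ on $\partial'' \Omega_j$. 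The second bound transfers directly to $W_j^{\lambda_0}$ for all $|Y| \geq R$, since the Kelvin transform only samples $W_j$ near the origin where convergence is uniform. Subtracting gives the order-one gap $c_1 |Y|^{2\sigma - n}$, and the decomposition $W_j - W_j^{\lambda_0} = \frac{c_1}{2} W_j + (1 - \frac{c_1}{2}) W_j - W_j^{\lambda_0}$, together with the trivial bound $W_j \geq L_j^{-1} \inf_{\mathcal{B}_1^+ \setminus \{0\}} U$, yields both terms of \eqref{W-WL0} at once. You would need to replace your Green's-representation sketch by an argument of this type, or else actually carry out the representation including a quantitative lower bound on its harmonic part coming from the data on $|Y| = R$ --- which amounts to the same maximum-principle step.
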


\begin{proof} Since $\widetilde{W} (Y) = |Y|^{2 \sigma - n} \widetilde{W} (Y/|Y|^2)$, we have $\lim_{|Y| \to \infty} |Y|^{n - 2 \sigma} \widetilde{W} (Y) = \widetilde{W} (0) = 1$. Similarly, we also have $\lim_{|Y| \to \infty} |Y|^{n - 2 \sigma} \widetilde{W}^{\lambda_0} (Y) = \lambda_0^{n - 2 \sigma}$. Hence, there exist a small $0 < c_1 < 1/4$ and a large $R > 4$ such that
$$
|\widetilde{W} (Y) - |Y|^{2 \sigma - n}| \leq \frac{c_1}{2} |Y|^{2 \sigma - n} ~~~~~~ \textmd{for} ~ |Y| \geq R,
$$
and
$$
\widetilde{W}^{\lambda_0} (Y) \leq (1 - 3 c_1) |Y|^{2 \sigma - n} ~~~~~~ \textmd{for} ~ |Y| \geq R.
$$
Since $W_j$ converges to $\widetilde{W}$ in $C_{loc}^1 (\overline{\R_+^{n + 1}})$, for large $j$ we obtain
\begin{equation}\label{WjR}
W_j (Y) \geq (1 - c_1) |Y|^{2 \sigma - n} ~~~~~~ \textmd{for} ~ |Y| = R, 
\end{equation}
and
$$
W_j^{\lambda_0} (Y) \leq (1 - 2 c_1) |Y|^{2 \sigma - n} ~~~~~~ \textmd{for} ~ |Y| \geq R.
$$
Recall that for $Y \in \partial'' \Omega_j$,
$$
|Y| = L_j^\frac{2}{n - 2 \sigma} \Big| L_j^{ - \frac{2}{n - 2 \sigma} } Y \Big| \geq L_j^\frac{2}{n - 2 \sigma} (1 - |x_j|).
$$
Thus
\begin{equation}\label{YWinf}
|Y|^{n - 2 \sigma} W_j (Y) \geq L_j (1 - |x_j|)^{n - 2 \sigma} \inf_{ \overline{\mathcal{B}_1^+} \setminus \{ 0 \} } U \to \infty ~~~~~~ \textmd{as} ~ j \to \infty.
\end{equation}
It follows from \eqref{WjR}, \eqref{YWinf} and Lemma \ref{MAXPRI} that for sufficiently large $j$, we have
$$
W_j (Y) \geq (1 - c_1) |Y|^{2 \sigma - n} ~~~~~~ \textmd{for} ~ Y \in \Omega_j \setminus \mathcal{B}_R^+.
$$
Again, since $W_j$ converges to $\widetilde{W}$ in $C_{loc}^1 (\overline{\R_+^{n + 1}})$, by Lemma \ref{B-BL}, there exists $c_2 > 0$ such that
\begin{equation}\label{W-WLe2}
W_j (Y) - W_j^{\lambda_0} (Y) \geq c_2 (|Y| - \lambda_0) |Y|^{2 \sigma - n - 1} ~~~~~~ \textmd{for} ~ Y \in \mathcal{B}_{R}^+ \setminus \mathcal{B}_{\lambda_0}^+
\end{equation}
for sufficiently large $j$.

Now we show that \eqref{W-WL0} holds for $Y \in \Omega_j \setminus \mathcal{B}_R^+$. By the definition of $W_j$, there exists $c_3 > 0$ such that
$$
W_j (Y) \geq L_j^{- 1} \inf_{ \overline{\mathcal{B}_1^+} \setminus \{ 0 \} } U \geq c_3 L_j^{- 1} (\lambda_0^{2 \sigma - n} - |Y|^{2 \sigma - n}) ~~~~~~ \textmd{for} ~ Y \in \Omega_j \setminus \mathcal{B}_R^+.
$$
Then we have
$$
\aligned
W_j (Y) - W_j^{\lambda_0} (Y) & = \frac{c_1}{2} W_j (Y) + \left( 1 - \frac{c_1}{2} \right) W_j (Y) - W_j^{\lambda_0} (Y) \\
& \geq \frac{c_1 c_3}{2} L_j^{- 1} (\lambda_0^{2 \sigma - n} - |Y|^{2 \sigma - n}) + \left[ \left(1 - \frac{c_1}{2} \right) (1 - c_1) - (1 - 2 c_1) \right] |Y|^{2 \sigma - n} \\
& \geq \frac{c_1 c_3}{2} L_i^{- 1} (\lambda_0^{2 \sigma - n} - |Y|^{2 \sigma - n}) + \frac{c_1 (1 + c_1)}{2} |Y|^{2 \sigma - n}
\endaligned
$$
for $Y \in \Omega_j \setminus \mathcal{B}_R^+$. Therefore, there exists a small $c_0>0$ such that for large $j$,
$$
W_j (Y) - W_j^{\lambda_0} (Y) \geq c_0 |Y|^{2 \sigma - n} + c_0 L_j^{- 1} (\lambda_0^{2 \sigma - n} - |Y|^{2 \sigma - n}) ~~~~~~ \textmd{for all} ~ Y \in \Omega_j \setminus \mathcal{B}_R^+.
$$
This together with \eqref{W-WLe2} implies that \eqref{W-WL0} holds by choosing $c_0$ sufficiently small. Finally, \eqref{W-WL1} is a direct consequence of Lemma \ref{B-BL}.
\end{proof}

Let $\tau \in (0, 1/4)$ be a constant to be specified later, we define
$$
\Pi_j = \left\{ (y, t) \in \R_+^{n + 1} : \Big( x_j + L_j^{ - \frac{2}{n - 2 \sigma} } y, L_j^{ - \frac{2}{n - 2 \sigma} } t \Big) \in \mathcal{B}_{2 \tau}^+ \right\} \subset \Omega_j 
$$
and 
$$
\Sigma_j = \left\{ (y, t) \in \R_+^{n + 1} : \Big( x_j + L_j^{ - \frac{2}{n - 2 \sigma} } y, L_j^{ - \frac{2}{n - 2 \sigma} } t \Big) \in \mathcal{B}_\tau^+ \right\} \subset \Pi_j.
$$
Define 
\begin{equation}\label{Phi0123}
\Phi_\lambda (Y) : = \int_{\partial' \Pi_j \setminus B_\lambda} G_\lambda (Y, \eta) q_\lambda (\eta) d \eta ~~~~~~ \textmd{for} ~ Y \in \R_+^{n + 1} \setminus \mathcal{B}_\lambda^+, 
\end{equation} 
where $G_\lambda (Y, \eta)$ is the Green's function defined in \eqref{Green} and $q_\lambda$ is the function given in \eqref{qL1}.    From Lemma \ref{G2}, we know that $\Phi_\lambda (Y) = 0$ when $|Y| = \lambda$ and
\begin{equation}\label{PLE}
\left\{
\aligned
{\rm div} (t^{1 - 2 \sigma} \nabla \Phi_\lambda) & = 0 & \textmd{in} & ~ \Pi_j \setminus \overline{\mathcal{B}_\lambda^+}, \\
\frac{\partial \Phi_\lambda}{\partial \nu^\sigma} & = q_\lambda ~~~~~~ & \textmd{on} & ~ \partial' \Pi_j \setminus \overline{B_\lambda}.
\endaligned
\right.
\end{equation}

\vskip0.1in

{\it Step 3. We state some estimates for $\Phi_\lambda$.}  

Since $w_j$ converges to $\widetilde{w}$ in $C_{loc}^1 (\R^n)$, we have for any $\lambda \in [1/2, 2]$,
\begin{equation}\label{wleta}
0 \leq w_j^\lambda (\eta) = \left( \frac{\lambda}{|\eta|} \right)^{n - 2 \sigma} w_j \left( \frac{\lambda^2 \eta}{|\eta|^2} \right) \leq C |\eta|^{2 \sigma - n} ~~~~~~ \textmd{for} ~ \eta \in \partial' \Pi_j \setminus B_\lambda
\end{equation}
when $j$ is sufficiently large, where $C > 0$ depends only on $n$ and $\sigma$.

By our assumptions on $K$, we have
\begin{equation}\label{KL}
|K_j (\eta^\lambda) - K_j (\eta)| \leq c(\tau) L_j^{- 1} (|\eta| - \lambda)^\alpha ~~~~~~ \textmd{for} ~ \eta \in \partial' \Pi_j \setminus B_\lambda,
\end{equation}
where $c(\cdot)$ is a non-negative function with $c(0) = 0$. This together with \eqref{qL1} and \eqref{wleta} implies that
\begin{equation}\label{qL}
|q_\lambda (\eta)| \leq c(\tau) L_j^{- 1} (|\eta| - \lambda)^\alpha |\eta|^{- n - 2 \sigma} ~~~~~~  \textmd{for} ~ \eta \in \partial' \Pi_j \setminus B_\lambda,
\end{equation}
Consequently, we have the following estimates for $\Phi_\lambda$.

\begin{lemma}\label{PhiL} For any $\lambda \in [1/2, 2]$, we have
$$
|\Phi_\lambda (Y)| \leq
\left\{
\aligned
& c(\tau) L_j^{- 1} (|Y| - \lambda) ~~~~~~ & \textmd{if} & ~ Y \in \mathcal{B}_{4 \lambda}^+ \setminus \mathcal{B}_\lambda^+, \\
& c(\tau) L_j^{- 1} |Y|^{2 \sigma - n} \log |Y| ~~~~~~ & \textmd{if} & ~ Y \in \Sigma_j \setminus \mathcal{B}_{4 \lambda}^+,
\endaligned
\right.
$$
where $c(\cdot)$ is a non-negative function with $c(0) = 0$.
\end{lemma}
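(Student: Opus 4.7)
\emph{Proof proposal for Lemma \ref{PhiL}.} The plan is to estimate $|\Phi_\lambda(Y)| \le \int_{\partial'\Pi_j\setminus B_\lambda} G_\lambda(Y,\eta)|q_\lambda(\eta)|\,d\eta$ by combining the Green's function bound from Lemma \ref{G3} (or its trivial analogue $G_\lambda(Y,\eta)\le C|Y-\eta|^{2\sigma-n}$) with the pointwise estimate \eqref{qL} on $q_\lambda$, after splitting the $\eta$-integration into regions adapted to the relative sizes of $|Y-\eta|$, $|\eta|-\lambda$, and $|Y|$. Two facts will be used repeatedly: $(|\eta|^2-\lambda^2)\le 2|\eta|(|\eta|-\lambda)$ for $|\eta|\ge\lambda$, and the critical identity $\alpha+2\sigma=(n+2\sigma)/2\ge 1$ coming from the Hölder exponent $\alpha=(n-2\sigma)/2$.

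\emph{Case 1 ($\lambda<|Y|\le 4\lambda$).} Split $\partial'\Pi_j\setminus B_\lambda$ into
$\mathcal{A}=\{|\eta-Y|\le (|Y|-\lambda)/2\}$ and $\mathcal{B}=\{|\eta-Y|>(|Y|-\lambda)/2\}$.
On $\mathcal{A}$ we have $|\eta|-\lambda\sim |Y|-\lambda$ and $|\eta|\sim|Y|$, so with $G_\lambda\le C|Y-\eta|^{2\sigma-n}$ the contribution is bounded by
$c(\tau)L_j^{-1}(|Y|-\lambda)^\alpha\int_{|Y-\eta|\le(|Y|-\lambda)/2}|Y-\eta|^{2\sigma-n}d\eta\le Cc(\tau)L_j^{-1}(|Y|-\lambda)^{\alpha+2\sigma}\le Cc(\tau)L_j^{-1}(|Y|-\lambda)$.
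On $\mathcal{B}$, Lemma \ref{G3} applies (since $|Y|\le 10\lambda$) and after using $(|\eta|^2-\lambda^2)\le 2|\eta|(|\eta|-\lambda)$ together with the triangle inequality $|\eta|-\lambda\le |Y-\eta|+(|Y|-\lambda)\le 3|Y-\eta|$ (valid on $\mathcal{B}$), the factors of $|Y-\eta|$ combine with $\alpha$ to produce an exponent $(n-2\sigma+2)/2$ in the denominator — precisely the borderline that yields a convergent integral in any dimension $n$ with $n+2\sigma>2$. The resulting contribution is again $\le Cc(\tau)L_j^{-1}(|Y|-\lambda)$.

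\emph{Case 2 ($Y\in\Sigma_j\setminus\mathcal{B}_{4\lambda}^+$).} Now $|Y|\ge 4\lambda$ and I split $\eta$ according to its size relative to $|Y|$:
$\mathrm{I}=\{\lambda<|\eta|\le |Y|/2\}$, $\mathrm{II}=\{|\eta|>2|Y|\}$, $\mathrm{III}=\{|Y|/2<|\eta|\le 2|Y|\}$. In regions $\mathrm{I}$ and $\mathrm{II}$ one has $|Y-\eta|\ge|Y|/2$ and $|Y-\eta|\ge|\eta|/2$ respectively, so $G_\lambda\le C|Y|^{2\sigma-n}$ on $\mathrm{I}$ and $G_\lambda\le C|\eta|^{2\sigma-n}$ on $\mathrm{II}$. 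Inserting \eqref{qL} and integrating in polar coordinates, the region $\mathrm{I}$ gives an integral $\int_{2\lambda}^{|Y|/2}r^{(n-6\sigma-2)/2}dr$, which is bounded for all $(n,\sigma)$ in the allowed range except $(n,\sigma)=(3,1/2)$, where it gives exactly $\log|Y|$; this is the source of the logarithm. Region $\mathrm{II}$ yields $|Y|^{\alpha-n}=|Y|^{-(n+2\sigma)/2}$, which is dominated by $|Y|^{2\sigma-n}$. For $\mathrm{III}$, further split by whether $|Y-\eta|\lessgtr|Y|/2$: the inner piece uses $G_\lambda\le C|Y-\eta|^{2\sigma-n}$ and integrates over a ball of radius $|Y|/2$ to give $|Y|^{\alpha-n}$; the outer piece uses $G_\lambda\le C|Y|^{2\sigma-n}$ and also contributes $|Y|^{\alpha-n}$. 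Both are absorbed into $c(\tau)L_j^{-1}|Y|^{2\sigma-n}\log|Y|$ since $\alpha-n\le 2\sigma-n$ in the admissible range $6\sigma\ge n$.

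\emph{Main obstacle.} The delicate point is the matching of powers: the Hölder exponent $\alpha=(n-2\sigma)/2$ is exactly critical, so several of the intermediate integrals are scale-invariant or borderline convergent. The argument hinges on extracting the factor $(|Y|-\lambda)$ (in Case 1) and $|Y|^{2\sigma-n}$ (in Case 2) cleanly, which requires (i) using Lemma \ref{G3} only in the far region $\mathcal{B}$ where $|\eta|-\lambda\le 3|Y-\eta|$ lets the singularity in $|Y-\eta|$ be tamed, and (ii) accepting the logarithmic loss in the critical case $n=3,\sigma=1/2$, which is precisely why $\log|Y|$ appears in the statement.
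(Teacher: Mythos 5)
Your proposal is correct and follows essentially the same route as the paper: both arguments insert the pointwise bound \eqref{qL} on $q_\lambda$, use the trivial bound $G_\lambda(Y,\eta)\le C|Y-\eta|^{2\sigma-n}$ near the diagonal and Lemma \ref{G3} away from it, and split the $\eta$-domain by the relative sizes of $|Y-\eta|$, $|\eta|-\lambda$ and $|Y|$, with the logarithm arising exactly from the borderline case $\alpha=2\sigma$ (i.e.\ $n=3$, $\sigma=1/2$) in the region $|\eta|\le|Y|/2$. The only cosmetic difference is in Case~1, where you apply Lemma \ref{G3} uniformly on the complement of the near-diagonal set instead of separating $|\eta|\le 8\lambda$ from $|\eta|>8\lambda$ as the paper does; your exponent count $-(n-2\sigma+2)/2$ is correct and the resulting integral converges, so this works.
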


\begin{proof} It follows from \eqref{qL} that for $Y \in \Sigma_j \setminus \mathcal{B}_\lambda^+$,
$$
|\Phi_\lambda (Y)| \leq c(\tau) L_j^{- 1} \int_{\partial' \Pi_j \setminus B_\lambda} G_\lambda (Y, \eta) (|\eta| - \lambda)^\alpha |\eta|^{- n - 2 \sigma} d \eta.
$$
We have two cases:

\vskip0.1in

{\it Case 1. $\lambda < |Y| < 4 \lambda$.}
Let $\partial' \Pi_j \setminus B_\lambda = A \cup B \cup D$ where
$$
\aligned
A & : = \{ \eta \in \partial' \Pi_j \setminus B_\lambda : |Y - \eta| < (|Y| - \lambda)/3 \}, \\
B & : = \{ \eta \in \partial' \Pi_j \setminus B_\lambda : |Y - \eta| \geq (|Y| - \lambda)/3 ~ \textmd{and} ~ |\eta| \leq 8 \lambda \}, \\
D & : = \{ \eta \in \partial' \Pi_j \setminus B_\lambda : |\eta| > 8 \lambda \}.
\endaligned
$$
Since $G_\lambda (Y, \eta) \leq C |Y - \eta|^{2 \sigma - n}$, $2 (|Y| - \lambda)/3 \leq |\eta| - \lambda \leq 4 (|Y| - \lambda)/3$ and $\lambda \leq |\eta| \leq 5 \lambda$ for any $\eta \in A$, we have 
$$
\aligned
\int_A G_\lambda (Y, \eta) (|\eta| - \lambda)^\alpha |\eta|^{- n - 2 \sigma} d \eta & \leq C (|Y| - \lambda)^\alpha \int_A |Y - \eta|^{2 \sigma - n} d \eta \\
& \leq C (|Y| - \lambda)^\alpha \int_{ \left\{ \eta \in \partial' \Pi_j \setminus B_\lambda : |y - \eta| < \frac{|Y| - \lambda}{3} \right\} } |y - \eta|^{2 \sigma - n} d \eta \\
& \leq C (|Y| - \lambda)^{\alpha + 2 \sigma} \leq C (|Y| - \lambda).
\endaligned
$$
By Lemma \ref{G3} and $|\eta| - \lambda \leq 4 |Y - \eta|$ for any $\eta \in B$, we have 
$$
\aligned
\int_B G_\lambda (Y, \eta) (|\eta| - \lambda)^\alpha |\eta|^{- n - 2 \sigma} d \eta & \leq C (|Y| - \lambda) \int_B |Y - \eta|^{2 \sigma - n - 2} (|\eta| - \lambda)^{1 + \alpha} d \eta \\
& \leq C (|Y| - \lambda) \int_B |Y - \eta|^{2 \sigma - n - 1 + \alpha} d \eta \\
& \leq C (|Y| - \lambda).
\endaligned
$$
For any $\eta \in D$, we have $|Y - \eta| \geq |\eta| - |Y| \geq |\eta|/2$ and $7 |\eta|/8 \leq |\eta| - \lambda \leq |\eta|$. By Lemma \ref{G3},
$$
\int_D G_\lambda (Y, \eta) (|\eta| - \lambda)^\alpha |\eta|^{- n - 2 \sigma} d \eta \leq C (|Y| - \lambda) \int_D |\eta|^{\alpha - 2 n} d \eta \leq C (|Y| - \lambda).
$$
Consequently, we obtain the first estimate in Lemma \ref{PhiL}.

\vskip0.1in

{\it Case 2. $Y \in \Sigma_j \setminus \mathcal{B}_{4 \lambda}^+$.} Let $\partial' \Pi_j \setminus B_\lambda = A_1 \cup A_2 \cup A_3 \cup A_4$ where
$$
\aligned
A_1 & : = \{ \eta \in \partial' \Pi_j \setminus B_\lambda : |\eta| < |Y|/2 \}, \\
A_2 & : = \{ \eta \in \partial' \Pi_j \setminus B_\lambda : |\eta| > 2 |Y| \}, \\
A_3 & : = \{ \eta \in \partial' \Pi_j \setminus B_\lambda : |Y - \eta| \leq |Y|/2 \}, \\
A_4 & : = \{ \eta \in \partial' \Pi_j \setminus B_\lambda : |Y - \eta| \geq |Y|/2 ~ \textmd{and} ~ |Y| \leq |\eta| \leq 2 |Y| \}.
\endaligned
$$
Since $G_\lambda (Y, \eta) \leq C |Y - \eta|^{2 \sigma - n}$ and $|Y - \eta| \geq |Y|/2$ for any $\eta \in A_1$,
$$
\aligned
\int_{A_1} G_\lambda (Y, \eta) (|\eta| - \lambda)^\alpha |\eta|^{- n - 2 \sigma} d \eta & \leq C \int_{A_1} |Y - \eta|^{2 \sigma - n} (|\eta| - \lambda)^\alpha |\eta|^{- n - 2 \sigma} d \eta \\
& \leq C  |Y|^{2 \sigma - n} \int_{A_1} |\eta|^{\alpha - n - 2 \sigma} d \eta \\
& \leq
\left\{
\aligned
& C |Y|^{2 \sigma - n} ~~~~~~ & \textmd{if} & ~ \alpha < 2 \sigma, \\
& C |Y|^{2 \sigma - n} \log |Y| ~~~~~~ & \textmd{if} & ~ \alpha = 2 \sigma. 
\endaligned
\right.
\endaligned
$$
For any $\eta \in A_2$, $|Y - \eta| \geq |\eta| - |Y| \geq |\eta|/2$, then 
$$
\aligned
\int_{A_2} G_\lambda (Y, \eta) (|\eta| - \lambda)^\alpha |\eta|^{- n - 2 \sigma} d \eta & \leq C \int_{A_2} |Y - \eta|^{2 \sigma - n} |\eta|^{\alpha - n - 2 \sigma} d \eta \\
& \leq C \int_{A_2} |\eta|^{\alpha - 2 n} d \eta \\
& \leq C |Y|^{\alpha - n}.
\endaligned
$$
For any $\eta \in A_3$, $|\eta| \geq |Y| - |Y - \eta| \geq |Y|/2$, then 
$$
\aligned
\int_{A_3} G_\lambda (Y, \eta) (|\eta| - \lambda)^\alpha |\eta|^{- n - 2 \sigma} d \eta & \leq C \int_{A_3} |Y - \eta|^{2 \sigma - n} |\eta|^{\alpha - n - 2 \sigma} d \eta \\
& \leq C |Y|^{\alpha - n - 2 \sigma} \int_{A_3} |Y - \eta|^{2 \sigma - n} d \eta \\
& \leq C |Y|^{\alpha - n - 2 \sigma} \int_{ \left\{ \eta \in \partial' \Pi_j \setminus B_\lambda : |y - \eta| \leq \frac{|Y|}{2} \right\} } |y - \eta|^{2 \sigma - n} d \eta \\
& \leq C |Y|^{\alpha - n}.
\endaligned
$$
We also have
$$
\aligned
\int_{A_4} G_\lambda (Y, \eta) (|\eta| - \lambda)^\alpha |\eta|^{- n - 2 \sigma} d \eta & \leq C \int_{A_4} |Y - \eta|^{2 \sigma - n} |\eta|^{\alpha - n - 2 \sigma} d \eta \\
& \leq C |Y|^{2 \sigma - n} \int_{A_4} |\eta|^{\alpha - n - 2 \sigma} d \eta \\
& \leq C |Y|^{\alpha - n}.
\endaligned
$$
From above, we get the second estimate in Lemma \ref{PhiL}.
\end{proof}

{\it Step 4. We will use the method of moving spheres to reach a contradiction.}

Inspired by \cite{CL-97,TZ-06,Zhang-02},  for $\lambda \in [1/2, 2]$ we define 
$$
A_\lambda (Y) : = - c_4 L_j^{- 1} (\lambda^{2 \sigma - n} - |Y|^{2 \sigma - n}) + \Phi_\lambda (Y),  ~~~~ ~ Y \in \R_+^{n + 1} \setminus \mathcal{B}_\lambda^+, 
$$
where $\Phi_\lambda$ is defined as in \eqref{Phi0123} and the constant $c_4$ is given by
$$
c_4 : = \frac{1}{32} \min\left\{ c_0, \inf_{ \overline{\mathcal{B}_1^+} \setminus \{ 0 \} } U \right\} >0.
$$
Here we recall that $c_0>0$ is defined in Lemma \ref{W-WL}.   

By Lemma \ref{PhiL}, we can choose $\tau$ sufficiently small such that for any $\lambda \in [1/2, 2]$,
\begin{equation}\label{ALNG}
A_\lambda (Y) < 0 ~~~~~~ \textmd{for all} ~ Y \in \Sigma_j \setminus \overline{\mathcal{B}_\lambda^+} 
\end{equation}
when $j$  is sufficiently large.  Then for $Y \in \partial'' \Sigma_j$, we have 
$$
\frac{ L_j^\frac{2}{n - 2 \sigma} \tau }{2} \leq L_j^\frac{2}{n - 2 \sigma} (\tau - |x_j|) \leq |Y| \leq L_j^\frac{2}{n - 2 \sigma} (\tau + |x_j|) \leq 2 L_j^\frac{2}{n - 2 \sigma} \tau
$$
for large $j$. By Lemma \ref{PhiL} and \eqref{ALNG},
\begin{equation}\label{AL-WL}
|A_\lambda (Y)| \leq c_4 \lambda^{2 \sigma - n} L_j^{- 1} \leq 2^{n - 2 \sigma} c_4 L_j^{- 1} \leq 8 c_4 L_j^{- 1} ~~~~~~ \textmd{for} ~ Y \in \partial'' \Sigma_j,
\end{equation}
and
\begin{equation}
\aligned
W_\lambda (Y) & = W_j (Y) - W_j^\lambda (Y) \\
& \geq L_j^{- 1} \inf_{ \overline{\mathcal{B}_1^+} \setminus \{ 0 \} } U - \lambda^{n - 2 \sigma} |Y|^{2 \sigma - n} W_j \left( \frac{\lambda^2 Y}{|Y|^2} \right) \\
& \geq L_j^{- 1} \inf_{ \overline{\mathcal{B}_1^+} \setminus \{ 0 \} } U - 4^{n - 2 \sigma} \tau^{2 \sigma - n} \| W_j \|_{L^\infty (\mathcal{B}_2^+)} L_j^{- 2} \\
& \geq L_j^{- 1} \inf_{ \overline{\mathcal{B}_1^+} \setminus \{ 0 \} } U - 8^{n - 2 \sigma} \tau^{2 \sigma - n} \| \widetilde{W} \|_{L^\infty (\mathcal{B}_2^+)} L_j^{- 2} \geq 16 c_4 L_j^{- 1}
\endaligned
\end{equation}
for $Y \in \partial'' \Sigma_j$ and large $j$. It implies that
\begin{equation}\label{WApa}
(W_\lambda + A_\lambda) (Y) \geq 16 c_4 L_j^{- 1} ~~~~~~ \textmd{for} ~ Y \in \partial'' \Sigma_j 
\end{equation}
when $j$ is sufficiently large.

Now,  from \eqref{WLE},  \eqref{bLP}, \eqref{PLE} and \eqref{ALNG} we have 
\begin{equation}\label{mciuye5}
\left\{
\aligned
{\rm div} (t^{1 - 2 \sigma} \nabla (W_\lambda + A_\lambda)) & = 0 & \textmd{in} & ~ \Sigma_j \setminus \overline{\mathcal{B}_\lambda^+}, \\
\frac{\partial (W_\lambda + A_\lambda)}{\partial \nu^\sigma} & \geq b_\lambda (w_\lambda + a_\lambda) ~~~~~~ & \textmd{on} & ~ \partial' (\Sigma_j \setminus \mathcal{B}_\lambda^+) \setminus \{ - L_j^\frac{2}{n - 2 \sigma} x_j \},
\endaligned
\right.
\end{equation} 
where $w_\lambda(x)=W_\lambda(x, 0)$ and $a_\lambda(x)=A_\lambda(x, 0)$. 

For $\lambda_0=1/2$ and $\lambda_1 =2$,  we define 
$$
\lambda^* : = \sup \Big\{ \lambda \geq \lambda_0 : W_\mu + A_\mu \geq 0 ~ \textmd{in} ~ (\overline{\Sigma_j} \setminus \mathcal{B}_\mu^+) \setminus \{ - L_j^\frac{2}{n - 2 \sigma} x_j \} ~ \textmd{for all} ~ \lambda_0 \leq \mu \leq \lambda \Big\}.
$$
Then $\lambda^*$ is well-defined as $\lambda_0$ belongs to the above set by Lemma \ref{W-WL}.  From \eqref{W-WL1} and \eqref{ALNG}, we know that $\lambda^* < \lambda_1$. By continuity we have 
$$
W_{\lambda^*} + A_{\lambda^*} \geq 0 ~~~~~~~ \textmd{in} (\overline{\Sigma_j} \setminus \mathcal{B}_{\lambda^*}^+) \setminus \{ - L_j^\frac{2}{n - 2 \sigma} x_j \}.
$$
We also have $W_{\lambda^*} + A_{\lambda^*} = 0$ on $\partial'' \mathcal{B}_{\lambda^*}$. It follows from \eqref{WApa} and the strong maximum principle that $W_{\lambda^*} + A_{\lambda^*} > 0$ in $(\Sigma_j \setminus \overline{\mathcal{B}_{\lambda^*}^+}) \setminus \{ - L_j^\frac{2}{n - 2 \sigma} x_j \}$.

For $\delta > 0$ small which will be fixed later,  by Lemma \ref{MAXPRI}, there exists $c = c(\delta) > 0$ such that
$$
W_{\lambda^*} + A_{\lambda^*} > c ~~~~~~ \textmd{in} ~ (\overline{\Sigma_j} \setminus \mathcal{B}_{\lambda^* + \delta}^+) \setminus \{ - L_j^\frac{2}{n - 2 \sigma} x_j \}.
$$
By the uniform continuity, there exists $0 < \varepsilon = \varepsilon (\delta) < \delta$ small such that for all $\lambda^* < \lambda < \lambda^* + \varepsilon$,
$$
(W_\lambda + A_\lambda) - (W_{\lambda^*} + A_{\lambda^*}) > - c/2 ~~~~~~ \textmd{in} ~ (\overline{\Sigma_j} \setminus \mathcal{B}_{\lambda^* + \delta}^+) \setminus \{ - L_j^\frac{2}{n - 2 \sigma} x_j \}.
$$
Hence we obtain
$$
W_\lambda + A_\lambda > c/2 ~~~~~~ \textmd{in} ~ (\overline{\Sigma_j} \setminus \mathcal{B}_{\lambda^* + \delta}^+) \setminus \{ - L_j^\frac{2}{n - 2 \sigma} x_j \}.
$$
Next we will make use of the narrow domain technique from \cite{BN}.   Using $(W_\lambda + A_\lambda)^- : = \max \{ - (W_\lambda + A_\lambda), 0 \}$ as a test function in \eqref{mciuye5},   we obtain that for any $\lambda^* < \lambda < \lambda^* + \varepsilon$, 
$$
\aligned
& \int_{\mathcal{B}_{\lambda^* + \delta}^+ \setminus \mathcal{B}_\lambda^+} t^{1 - 2 \sigma} | \nabla (W_\lambda + A_\lambda)^- |^2 \\
& \leq - \int_{B_{\lambda^* + \delta} \setminus B_\lambda} b_\lambda (w_\lambda + a_\lambda) (w_\lambda + a_\lambda)^- \\
& \leq C \int_{B_{\lambda^* + \delta} \setminus B_\lambda} \big( (w_\lambda + a_\lambda)^- \big)^2 \\
& \leq C \bigg( \int_{B_{\lambda^* + \delta} \setminus B_\lambda} \big( (w_\lambda + a_\lambda)^- \big)^\frac{2 n}{n - 2 \sigma} \bigg)^\frac{n - 2 \sigma}{n} \mathcal{L}^n (B_{\lambda^* + \delta} \setminus B_\lambda)^\frac{2 \sigma}{n} \\
& \leq C \bigg( \int_{\mathcal{B}_{\lambda^* + \delta}^+ \setminus \mathcal{B}_\lambda^+} t^{1 - 2 \sigma} | \nabla (W_\lambda + A_\lambda)^- |^2 \bigg) \mathcal{L}^n (B_{\lambda^* + \delta} \setminus B_\lambda)^\frac{2 \sigma}{n}.
\endaligned
$$
We can fix $\delta > 0$ sufficiently small such that for all $\lambda^* < \lambda < \lambda^* + \varepsilon$,
$$
C \mathcal{L}^n (B_{\lambda^* + \delta} \setminus B_\lambda)^\frac{2 \sigma}{n} \leq 1/2.
$$
Then
$$
\nabla (W_\lambda + A_\lambda)^- = 0 ~~~~~~ \textmd{in} ~ \mathcal{B}_{\lambda^* + \delta}^+ \setminus \mathcal{B}_\lambda^+.
$$
Recall that
$$
(W_\lambda + A_\lambda)^- = 0 ~~~~~~ \textmd{on} ~ \partial'' \mathcal{B}_\lambda^+,
$$
then we have
$$
W_\lambda + A_\lambda \geq 0 ~~~~~~ \textmd{in} ~ \mathcal{B}_{\lambda^* + \delta}^+ \setminus \mathcal{B}_\lambda^+.
$$
This implies that there exists $\varepsilon > 0$ such that for all $\lambda^* < \lambda < \lambda^* + \varepsilon$,
$$
W_\lambda + A_\lambda \geq 0 ~~~~~~ \textmd{in} ~ (\overline{\Sigma_j} \setminus \mathcal{B}_{ \lambda}^+) \setminus \{ - L_j^\frac{2}{n - 2 \sigma} x_j \},
$$
which contradicts the definition of $\lambda^*$. The proof of Theorem \ref{Pri} is completed.  \hfill$\square$

\section{Large singular solutions in higher dimensions}\label{Sect4}

In this section, we shall prove Theorem \ref{Lar} and always assume $\sigma \in (0, 1)$. We write $a_i \sim b_i$ if the sequence $\{ a_i/b_i \}_{i = 1}^\infty$ is bounded between two positive constants depending only on $n$, $\sigma$, $\inf_{\R^n} k$ and $\sup_{\R^n} k$.

The definition of fractional Laplacian \eqref{FL} is equivalent to
\begin{equation}\label{FLS}
(- \Delta)^\sigma u(x) = \frac{ C_{n, \sigma} }{2} \lim\limits_{r \to 0^+} \int_{\R^n \setminus B_r} \frac{2 u(x) - u(x + y) - u(x - y)}{ |y|^{n + 2 \sigma} } dy.
\end{equation}
For real number $\gamma > 0$, we denote $C^\gamma$ as the space $C^{[\gamma], \gamma'}$ where $[\gamma]$ is the greatest integer satisfying $[\gamma]  < \gamma$ and $\gamma' = \gamma - [\gamma]$. Define
$$
L_\sigma (\R^n) = \left\{ u \in L_{loc}^1 (\R^n) : \int_{\R^n} \frac{|u(x)|}{ 1 + |x|^{n + 2 \sigma} } dx < \infty \right\}.
$$
If $u \in L_\sigma (\R^n) \cap C^{2 \sigma + \varepsilon} (\R^n \setminus \{ 0 \})$ for some $\varepsilon > 0$, then $(- \Delta)^\sigma u (x)$ is well-defined for every $x\in \R^n \setminus \{ 0 \}$  (see, e.g., \cite{Si-07}).      To prove Theorem \ref{Lar}, we also need the following simple lemma. 

\begin{lemma}[\cite{Ta-05}]\label{Talia}
Suppose $p > 1$, $\{ a_i \}_{i = 1}^N \subset (0, \infty)$, and $a_1 \geq a_i$ for $2 \leq i \leq N$. Then
$$
\frac{ \sum_{i = 1}^N a_i^p}{ \Big( \sum_{i = 1}^N a_i \Big)^p } \leq \frac{ 1 + \frac{a_2}{a_1} }{ 1 + p \frac{a_2}{a_1} } < 1.
$$
\end{lemma}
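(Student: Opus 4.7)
The strict right-hand inequality $(1+t)/(1+pt)<1$ with $t:=a_2/a_1>0$ is automatic from $p>1$, so the content is the left inequality. I would prove it by normalizing $a_1=1$ (the expression is scale-invariant), setting $t:=a_2\in(0,1]$, and writing $S:=\sum_{i=1}^N a_i\ge 1+t$. The plan is to eliminate the tail indices $i\ge 3$ by a trivial pointwise bound and then reduce to the two-term case $N=2$.

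For the reduction, each $a_i$ with $i\ge 3$ lies in $(0,1]$, so $p>1$ gives $a_i^p\le a_i$. Summing,
$$
\sum_{i=1}^N a_i^p \ \le\ 1+t^p+(S-1-t)\ =\ S-(t-t^p),
$$
so it suffices to prove the sharper bound $(1+pt)\bigl(S-(t-t^p)\bigr)\le(1+t)S^p$ for every $S\ge 1+t$. I would handle this by studying
$$
\psi(S)\ :=\ (1+t)S^p-(1+pt)\bigl(S-(t-t^p)\bigr).
$$
Its derivative $\psi'(S)=p(1+t)S^{p-1}-(1+pt)$ is non-decreasing in $S$, and at the endpoint $S=1+t$ Bernoulli's inequality $(1+t)^p\ge 1+pt$ gives
$$
\psi'(1+t)=p(1+t)^p-(1+pt)\ \ge\ (p-1)(1+pt)\ \ge\ 0.
$$
Hence $\psi$ is non-decreasing on $[1+t,\infty)$, and the estimate reduces to $\psi(1+t)\ge 0$, i.e., the two-term inequality
$$
(1+t^p)(1+pt)\ \le\ (1+t)^{p+1},\qquad t\in[0,1].
$$

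For this last inequality I would simply chain two one-line bounds:
$$
(1+t)^{p+1}=(1+t)(1+t)^p\ \ge\ (1+t)(1+pt)\ \ge\ (1+t^p)(1+pt),
$$
using Bernoulli $(1+t)^p\ge 1+pt$ for the first step and $t\ge t^p$ on $[0,1]$ for the second. This completes the argument.

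I do not foresee a serious obstacle: the one genuinely substantive ingredient is Bernoulli's inequality, used once to secure the monotonicity in $S$ and once to handle the $N=2$ case, combined with the elementary fact that $x\mapsto x^p$ lies below the identity on $[0,1]$ when $p>1$. The only part that requires a moment's thought is recognizing that the tail bound $a_i^p\le a_i$ is exactly strong enough to collapse the $N$-term problem onto its two-term skeleton.
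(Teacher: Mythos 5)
Your proof is correct; every step checks out. The paper itself offers no proof of this lemma (it simply cites Taliaferro~\cite{Ta-05}), so there is nothing in-text to compare against. For what it is worth, a shorter route is available and gives a sharper constant: since $a_i\le a_1$ and $p>1$, one has $a_i^p=a_i^{p-1}a_i\le a_1^{p-1}a_i$ for every $i$, whence
$$
\frac{\sum_{i=1}^N a_i^p}{\bigl(\sum_{i=1}^N a_i\bigr)^p}\le\Bigl(\frac{a_1}{\sum_{i=1}^N a_i}\Bigr)^{p-1}\le\Bigl(\frac{a_1}{a_1+a_2}\Bigr)^{p-1}=\frac{1}{\bigl(1+\frac{a_2}{a_1}\bigr)^{p-1}},
$$
and the last quantity is $\le\frac{1+a_2/a_1}{1+p\,a_2/a_1}$ by a single application of Bernoulli's inequality $(1+t)^p\ge 1+pt$. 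This bypasses the calculus on $\psi(S)$ and the case analysis entirely. That said, your argument stands on its own: the normalization, the tail bound $a_i^p\le a_i$, the monotonicity of $\psi$ via $\psi'(1+t)\ge (p-1)(1+pt)\ge 0$, and the two Bernoulli steps for the $N=2$ endpoint are all sound.
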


The method to prove Theorem \ref{Lar} is similar to that of Taliaferro \cite{Ta-05} for the Laplacian case $\sigma=1$, which has also been used to higher order conformal $Q$-curvature equation in our recent paper \cite{DYla}.   Here we will first set up a framework to fit the nonlocal equation 
\begin{equation}\label{Fravko9e7}
(- \Delta)^\sigma u = K(x) u^\frac{n + 2 \sigma}{n - 2 \sigma} ~~~~~~ \textmd{in} ~ \R^n \setminus \{ 0 \}.  
\end{equation} 
Roughly speaking,  we would construct a sequence of bubbles $\{ u_i \}_{i = 1}^\infty$ such that the function $\overline{u} : = \sum_{i = 1}^\infty u_i$ satisfies $\overline{u} (x) \neq \varphi (|x|)$ near the origin. Then we find a positive bounded function $u_0 \in C^{2 \sigma + \gamma} (\R^n \setminus \{ 0 \})$ by the method of sub- and super-solutions such that 
$$
u : = u_0 + \overline{u} ~~~~~~ \textmd{and} ~~~~~~ K : = \frac{ (- \Delta)^\sigma u }{ u^{ (n + 2 \sigma)/(n - 2 \sigma) } } 
$$
satisfy the corresponding conclusions of Theorem \ref{Lar} for the nonlocal equation \eqref{Fravko9e7}.  We point out that the bubbles $\{ u_i \}_{i = 1}^\infty$ will be selected very carefully so that $| \nabla K |$ can be continuous in the whole $\R^n$.  Eventually, we define $U : = \mathcal{P}_\sigma [u]$ which is the desired solution for our Theorem \ref{Lar}, where $\mathcal{P}_\sigma$ is the Poisson kernel given in \eqref{Poss09}.  The same arguments as in \cite{DYla,Ta-05}  will be omitted.  Meanwhile, we give the detailed proofs for those different with \cite{DYla,Ta-05}. In particular,  the main differences are as follows.

\begin{itemize}
\item We use the method of sub- and super-solutions for fractional Laplacian.

\item We show that $u_0$ is a distributional solution in $\R^n$ for a nonlocal equation.

\item We need a slightly different estimate for $| \nabla K |$ since $n > 2 \sigma + 3$.  
\end{itemize}

\vskip0.10in

\noindent {\bf Proof of Theorem \ref{Lar}.} The proof consists of seven steps.

\vskip0.10in

{\it Step 1.  Preliminaries.} Without loss of generality, we can assume that $0 < \varepsilon < 1$ and $k(0) = 1$. Since $\nabla k(0) = 0$, there exists a $C^1$ positive function $\widetilde{k} : \R^n \to \R$ such that $\widetilde{k} \equiv 1$ in a small neighborhood of the origin, $\widetilde{k} (x) = k(x)$ for $|x| \geq \varepsilon$ and $\| \widetilde{k} - k \|_{C^1 (\R^n)} < \varepsilon/2$. Replacing $k$ by $\widetilde{k}$, we can assume that $k \equiv 1$ in $B_\delta$ for some $0 < \delta < \varepsilon$.

Let
$$
h(r, \lambda) = c_{n, \sigma} \left( \frac{\lambda}{\lambda^2 + r^2} \right)^\frac{n - 2 \sigma}{2}
$$
and $\psi_\lambda (x)\ : = h (|x|, \lambda)$ for $\lambda>0$,  which are the well-known bubbles to the equation 
$$
(- \Delta)^\sigma \psi_\lambda = \psi_\lambda^\frac{n + 2 \sigma}{n - 2 \sigma} ~~~~~~ \textmd{in} ~ \R^n.
$$
After some calculations, one can find that there exist $\delta_1$ and $\delta_2$ satisfying
\begin{equation}\label{delta}
0 < \delta_2 < \frac{\delta_1}{2} < \frac{\delta}{4}
\end{equation}
and for any $|x| \leq \delta_2$ or $|x| \geq \delta$,
\begin{equation}\label{12-2}
\frac{1}{2} < \frac{ \psi_\lambda (x - x_1) }{ \psi_\lambda (x - x_2) } < 2 ~~~~~~ \textmd{when} ~ |x_1| = |x_2| = \delta_1 ~ \textmd{and} ~ 0 < \lambda \leq \delta_2.
\end{equation}

Recall that $k$ is bounded between two positive constants, we denote 
\begin{equation}\label{defab}
a = \frac{1}{2} \inf\limits_{\R^n} k > 0 ~~~~~~ \textmd{and} ~~~~~~ b = \sup\limits_{\R^n} k > 0.
\end{equation}
Let $i_0$ be the smallest integer greater than $2$ such that
\begin{equation}\label{defi0}
i_0^\frac{4 \sigma}{n - 2 \sigma} > \frac{ 2^\frac{3 n + 2 \sigma}{n - 2 \sigma} }{ (2 a)^\frac{n + 2 \sigma}{4 \sigma} }.
\end{equation}

Choose a sequence $\{ x_i \}_{i = 1}^\infty$ of distinct points in $\R^n$ and a sequence $\{ r_i \}_{i = 1}^\infty$ of positive numbers such that
\begin{equation}\label{defxi}
|x_1| = |x_2| = \cdots = | x_{i_0} | = \delta_1, ~~~~~~ \lim\limits_{i \to \infty} |x_i| = 0,
\end{equation}
\begin{equation}\label{defri}
r_1 = r_2 = \cdots = r_{i_0} = \frac{\delta_2}{2}, ~~~~~~ \lim\limits_{i \to \infty} r_i = 0,
\end{equation}
\begin{equation}\label{defBi1}
B_{4 r_i} (x_i) \subset B_{\delta_2} \setminus \{ 0 \} ~~~~~~ \textmd{for} ~ i > i_0
\end{equation}
and
\begin{equation}\label{defBi2}
\overline{B_{2 r_i} (x_i)} \cap \overline{B_{2 r_j} (x_j)} = \emptyset ~~~~~~ \textmd{for} ~ j > i > i_0.
\end{equation}
In addition, we require that the union of the line segments $\overline{x_1 x_2}$, $\overline{x_2 x_3}$, $\dots$, $\overline{ x_{i_0 - 1} x_{i_0} }$, $\overline{ x_{i_0} x_1 }$ be a regular $i_0$-gon. We will prescribe the side length of this polygon later. From \eqref{delta}, \eqref{defxi} and \eqref{defri} we know that
$$
\overline{B_{2 r_i} (x_i)} \subset B_{2 \delta_1} \setminus \overline{ B_{\delta_2} } ~~~~~~ \textmd{for} ~ 1 \leq i \leq i_0
$$
and hence by \eqref{defBi1},
\begin{equation}\label{defBi3}
\overline{B_{2 r_i} (x_i)} \cap \overline{B_{2 r_j} (x_j)} = \emptyset ~~~~~~ \textmd{for} ~ 1 \leq i \leq i_0 < j.
\end{equation}

Define three functions $f : [0, \infty) \times (0, \infty) \times (0, \infty) \to \R$ and $M, Z : (0, 1) \times (0, \infty) \to (0, \infty)$ by
$$
f(z_1, z_2, z_3) = z_2 (z_1 + z_3)^\frac{n + 2 \sigma}{n - 2 \sigma} - z_1^\frac{n + 2 \sigma}{n - 2 \sigma},
$$
\begin{equation}\label{defMZ}
M(z_2, z_3) = \frac{ z_2 z_3^\frac{n + 2 \sigma}{n - 2 \sigma} }{ \Big( 1 - z_2^\frac{n - 2 \sigma}{4 \sigma} \Big)^\frac{4 \sigma}{n - 2 \sigma} } ~~~~~~ \textmd{and} ~~~~~~ Z(z_2, z_3) = \frac{ z_3 z_2^\frac{n - 2 \sigma}{4 \sigma} }{ 1 - z_2^\frac{n - 2 \sigma}{4 \sigma} }.
\end{equation}
For each fixed $(z_2, z_3) \in (0, 1) \times (0, \infty)$, the function $f(\cdot, z_2, z_3) : [0, \infty) \to \R$ is strictly increasing on $[0, Z(z_2, z_3)]$ and is strictly decreasing on $[Z(z_2, z_3), \infty)$, and attains its maximum value $M(z_2, z_3)$ at $z_1 = Z(z_2, z_3)$.

Define $F : [0, \infty) \times (0, \infty) \times (0, \infty) \to (0, \infty)$ by
$$
F(z_1, z_2, z_3) =
\left\{
\aligned
& f(z_1, z_2, z_3) & \textmd{if} & ~ 0 < z_2 < 1 ~ \textmd{and} ~ 0 \leq z_1 \leq Z(z_2, z_3), \\
& M(z_2, z_3) & \textmd{if} & ~ 0 < z_2 < 1 ~ \textmd{and} ~ z_1 > Z(z_2, z_3), \\
& f(z_1, z_2, z_3) ~~~~~~ & \textmd{if} & ~ z_2 \geq 1.
\endaligned
\right.
$$
It is easy to see that $f$ and $F$ are $C^1$, $f \leq F$ and $F$ is non-decreasing in $z_1$, $z_2$ and $z_3$.

\vskip0.10in

{\it Step 2. Select the sequences $\{ x_i \}_{i = 1}^\infty$ and $\{ \lambda_i \}_{i = 1}^\infty$.} Let
$$
w(x) = (2 b)^{ - \frac{n}{2 \sigma} } h(|x|, 1) = \frac{ c_{n, \sigma} }{ (2 b)^\frac{n}{2 \sigma} } \left( \frac{1}{1 + |x|^2} \right)^\frac{n - 2 \sigma}{2} ~~~~~~ \textmd{for} ~ x \in \R^n.
$$
Then we have
\begin{equation}\label{Lsw}
(- \Delta)^\sigma w = (2 b)^\frac{2 n}{n - 2 \sigma} w^\frac{n + 2 \sigma}{n - 2 \sigma} ~~~~~~ \textmd{in} ~ \R^n 
\end{equation}
and by \eqref{defab},
\begin{equation}\label{supw0}
\sup\limits_{\R^n} w = w(0) = c_{n, \sigma} (2 b)^{ - \frac{n}{2 \sigma} } < c_{n, \sigma}.
\end{equation}

Choose a sequence $\{ \varepsilon_i \}_{i = 1}^\infty$ of positive numbers such that
\begin{equation}\label{defep}
\varepsilon_1 = \varepsilon_2 = \cdots = \varepsilon_{i_0} ~~~~~~ \textmd{and} ~~~~~~ \varepsilon_i \leq 2^{- i} ~~~ \textmd{for} ~ i \geq 1.
\end{equation}

Now we introduce four sequences of real numbers as follows. For $i \geq 1$, let
\begin{equation}\label{defki}
k_i \in \left( \frac{1}{2},1 \right) ~~~~~~ \textmd{with} ~~~~~~ k_1 = k_2 = \cdots = k_{i_0},
\end{equation}
\begin{equation}\label{defMi}
M_i = \frac{M( k_i, 2 w(0) )}{ ( 2 w(0) )^\frac{n + 2 \sigma}{n - 2 \sigma} } = \frac{k_i}{ \Big( 1 - k_i^\frac{n - 2 \sigma}{4 \sigma} \Big)^\frac{4 \sigma}{n - 2 \sigma} },
\end{equation}
\begin{equation}\label{defrh}
\rho_i = \sup \left\{ \rho > 0 ~ : ~ \mathcal{I}_{2 \sigma} (\chi_{B_{2 \rho} (x_i)}) \leq \frac{w}{ 2^{i + 1}( 2 w(0) )^\frac{n + 2 \sigma}{n - 2 \sigma} M_i } \right\}
\end{equation}
and
\begin{equation}\label{defla}
\lambda_i = \sup \Big\{ \lambda > 0 ~ : ~ \psi_\lambda (x - x_i) \leq \varepsilon_i a^\frac{n - 2 \sigma}{4 \sigma} w(x) ~ \textmd{for} ~ |x - x_i| \geq \rho_i \Big\}, 
\end{equation}
where $\mathcal{I}_{2 \sigma}$ is the Riesz potential operator of order $2 \sigma$ and $\chi_{B_{2 \rho} (x_i)}$ is the characteristic function of the ball $B_{2 \rho} (x_i)$. Then we have

\begin{lemma}\label{sim} For $i \geq 1$, 
$$
M_i \sim \frac{1}{ (1 - k_i)^\frac{4 \sigma}{n - 2 \sigma} }, ~~~ \rho_i^{2 \sigma} \sim \frac{1}{ 2^i M_i } ~~~ \textmd{and} ~~~ \lambda_i \sim \varepsilon_i^\frac{2}{n - 2 \sigma} \rho_i^2.
$$
\end{lemma}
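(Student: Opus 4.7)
The plan is to verify the three asymptotic equivalences in turn, each amounting to an elementary estimate once the defining expressions are unpacked. Throughout, I use that $k_i \in (1/2,1)$, that $\{x_i\}$ is bounded (indeed $|x_i| \to 0$), that $w$ is smooth, bounded, and positive on any compact set, and that $w(x) \sim (1+|x|)^{2\sigma-n}$ globally.

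First I handle $M_i$. Setting $\beta = (n-2\sigma)/(4\sigma) \in (0,\infty)$, the mean value theorem gives $1 - k_i^\beta = \beta\,\xi_i^{\beta-1}(1-k_i)$ for some $\xi_i \in (k_i,1) \subset (1/2,1)$. Since $\xi_i^{\beta-1}$ is bounded between two positive constants depending only on $n,\sigma$, we have $1-k_i^\beta \sim 1-k_i$. Raising to the $1/\beta$ power and noting that $k_i \in (1/2,1)$ is bounded between positive constants, the definition \eqref{defMi} yields $M_i \sim (1-k_i)^{-4\sigma/(n-2\sigma)}$.

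For $\rho_i$, let $C_i := 2^{i+1}(2w(0))^{(n+2\sigma)/(n-2\sigma)} M_i$, so that $C_i \sim 2^i M_i$. I will show $\sup_{x\in \R^n}\mathcal{I}_{2\sigma}(\chi_{B_{2\rho}(x_i)})(x)/w(x) \sim \rho^{2\sigma}$ uniformly in $i$ for small $\rho$, which by the definition \eqref{defrh} immediately gives $\rho_i^{2\sigma} \sim 1/C_i \sim 1/(2^i M_i)$. The upper bound splits the integral into $|x-x_i| \le 2\rho$ (on which $\mathcal{I}_{2\sigma}(\chi_{B_{2\rho}(x_i)})(x) \le C\rho^{2\sigma}$) and $|x-x_i|\ge 2\rho$ (on which it is bounded by $C\rho^n|x-x_i|^{2\sigma-n}$); since $w$ is bounded below by a uniform positive constant on any ball containing the $x_i$, and $w(x) \sim |x|^{2\sigma-n}$ at infinity while $|x-x_i|^{2\sigma-n}/|x|^{2\sigma-n}$ is bounded for $|x|\to\infty$ with $x_i$ bounded, both contributions are at most $C\rho^{2\sigma}$ (using $\rho^n \le \rho^{2\sigma}$ for small $\rho$ since $n>2\sigma$). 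The matching lower bound comes from evaluating at $x=x_i$: $\mathcal{I}_{2\sigma}(\chi_{B_{2\rho}(x_i)})(x_i) = c\int_{B_{2\rho}(0)} |y|^{2\sigma-n}\,dy = c'\rho^{2\sigma}$, divided by $w(x_i) \le w(0)$.

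Finally, for $\lambda_i$, the defining inequality in \eqref{defla} amounts to
\[
\sup_{|x-x_i|\ge \rho_i} \frac{\psi_\lambda(x-x_i)}{w(x)} \le \varepsilon_i a^{(n-2\sigma)/(4\sigma)}.
\]
Since the $x_i$ stay bounded, $\rho_i \to 0$, and $\lambda_i$ will be small, the relevant regime is $\lambda \ll |x-x_i|$, where $\psi_\lambda(x-x_i) \sim \lambda^{(n-2\sigma)/2} |x-x_i|^{2\sigma-n}$. The ratio $|x-x_i|^{2\sigma-n}/w(x)$ is continuous, bounded away from $0$ and $\infty$ for $|x-x_i|$ bounded (since $x_i$ is bounded and $w$ is positive and smooth), and has a finite positive limit as $|x|\to\infty$; its supremum over $|x-x_i|\ge\rho_i$ is comparable to $\rho_i^{2\sigma-n}$ (attained near $|x-x_i| = \rho_i$, since $2\sigma < n$). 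Therefore the constraint reduces to $\lambda^{(n-2\sigma)/2} \rho_i^{2\sigma-n} \sim \varepsilon_i$, giving $\lambda_i \sim \varepsilon_i^{2/(n-2\sigma)} \rho_i^2$.

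The main obstacle is step two: carefully checking that the Riesz-potential-to-$w$ ratio is governed by its behavior near $x_i$, uniformly in $i$. The point is that although the constants $C_i \to \infty$, the $x_i$ remain in a fixed bounded set where $w$ is pinched between positive constants, so the near-field estimate $\rho^{2\sigma}$ is the true order of the supremum; the far-field contribution is harmless both because $\rho^n$ is lower order than $\rho^{2\sigma}$ and because $w$ decays at the same Newtonian rate $|x|^{2\sigma-n}$ that bounds the potential at infinity.
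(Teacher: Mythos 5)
Your proof is correct, and it takes the natural direct approach of unpacking the three definitions \eqref{defMi}, \eqref{defrh}, \eqref{defla} and estimating. The paper's own "proof" of this lemma is only a citation to Lemma~2.2 of the authors' earlier paper \cite{DYla}, so there is nothing in this paper's text to compare against line-by-line; your argument is almost certainly the same computation.

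Two minor points that are glossed over but easily repaired. First, in the $\rho_i$ step your far-field bound $\mathcal{I}_{2\sigma}(\chi_{B_{2\rho}(x_i)})(x)\lesssim\rho^n|x-x_i|^{2\sigma-n}$ is stated for $|x-x_i|\ge 2\rho$, but the pointwise comparison $|x-y|\gtrsim|x-x_i|$ for $y\in B_{2\rho}(x_i)$ only kicks in for, say, $|x-x_i|\ge 4\rho$; in the overlap annulus $2\rho\le|x-x_i|\le 4\rho$ one should instead use the near-field bound $\int_{B_{6\rho}(x)}|x-y|^{2\sigma-n}\,dy\lesssim\rho^{2\sigma}$, which happens to match $\rho^n|x-x_i|^{2\sigma-n}$ up to a constant on that annulus, so the stated conclusion is still correct. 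Second, both the $\rho_i$ estimate and the $\lambda_i$ estimate are run "for small $\rho$" and "for $\lambda\ll|x-x_i|$," and one should note explicitly that the definitions themselves force $\rho_i$ (and then $\lambda_i$) to be bounded above uniformly in $i$ by a constant depending only on the allowed data: your lower bound $\mathcal{I}_{2\sigma}(\chi_{B_{2\rho}(x_i)})(x_i)/w(x_i)\gtrsim\rho^{2\sigma}$, valid for all $\rho>0$, gives $\rho_i\lesssim C_i^{-1/(2\sigma)}\le C_1^{-1/(2\sigma)}$, and then $\lambda_i\lesssim\rho_i$ follows from your own sup estimate. With that in hand, the uniform thresholds for "small" are fixed and the three equivalences follow as you write. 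The observation that the $x_i$ stay in a compact set on which $w$ is pinched between positive constants, while $w$ decays exactly at the Newtonian rate matching the Riesz potential tail, is the right mechanism and is stated clearly.
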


\begin{proof} It is the same as that of \cite[Lemma 2.2]{DYla}.
\end{proof}

By Lemma \ref{sim}, after increasing the values of $k_i$ for certain values of $i$ while holding $\varepsilon_i$ fixed, we can assume for $i \geq 1$ that 
\begin{equation}\label{3seq1}
M_i > 9^i, ~~~~~~ \rho_i \in (0, r_i), ~~~~~~ \lambda_i \in (0, \delta_2),
\end{equation}
and
\begin{equation}\label{3seq2}
k_i^\frac{n + 2 \sigma}{4 \sigma} > \frac{ 1 + \big( \frac{1}{3} \big)^{n - 2 \sigma} }{ 1 + \frac{n + 2 \sigma}{n - 2 \sigma} \big( \frac{1}{3} \big)^{n - 2 \sigma} }, ~~~~~~ M_i^\beta > \max \left\{ \frac{1}{ \varepsilon_i^\frac{4 \sigma}{n - 2 \sigma} }, ~ 2^i \right\}, ~~~~~~ \lambda_i^\beta < \frac{ \varepsilon_i^\frac{2 \sigma}{n - 2 \sigma} }{2^i}, 
\end{equation}
where $\beta = \beta (n, \sigma) \in (0, 1/6)$ is a constant to be specified later.

Notice that for $1 \leq i \leq i_0$, $\rho_i$ and $\lambda_i$ do not change as $x_i$ moves on the sphere $|x| = \delta_1$. Therefore, we can require that the union of the line segments $\overline{x_1 x_2}$, $\overline{x_2 x_3}$, $\dots$, $\overline{ x_{i_0 - 1} x_{i_0} }$, $\overline{ x_{i_0} x_1 }$ be a regular $i_0$-gon with side length $4 \rho_1$. Thus,
\begin{equation}\label{distB}
\textmd{dist} ( B_{\rho_i} (x_i), B_{\rho_j} (x_j) ) \geq \rho_i + \rho_j ~~~~~~ \textmd{for} ~ 1 \leq i < j \leq i_0.
\end{equation}
By \eqref{defBi2}, \eqref{defBi3} and \eqref{3seq1}, the inequality \eqref{distB} also holds for $1 \leq i < j$.

For $i \geq 1$, define
$$
u_i (x) : = \psi_{\lambda_i} (x - x_i).
$$
Then one can check that
\begin{equation}\label{minBj}
\min\limits_{x \in B_{2 \rho_j} (x_j)} \frac{u_{j + 1} (x)}{u_{j - 1} (x)} > \left( \frac{1}{3} \right)^{n - 2 \sigma} ~~~~~~ \textmd{for} ~ 2 \leq j \leq i_0 - 1
\end{equation}
and a similar inequality holds when $j$ is $1$ or $i_0$.

Here we also give some inequalities which will be used later. By \eqref{3seq2}, Lemma \ref{sim}, the monotonicity of $Z$ and using the same arguments as in  \cite[(39)]{DYla},   we have for $1 \leq j \leq i_0$,
\begin{equation}\label{ZkMj}
\min\limits_{ x \in B_{2 \rho_j} (x_j) } Z \bigg( k_j^\frac{n + 2 \sigma}{4 \sigma}, \sum\limits_{i = 1, i \neq j}^{i_0} u_i (x) \bigg) \geq C M_j^\frac{(1 - \beta) (n - 2 \sigma)}{4 \sigma}.
\end{equation}
Thus, by increasing $k_1$ (recall that $k_1 = k_2 = \cdots = k_{i_0}$) we have
\begin{equation}\label{Zkw0}
\min\limits_{x \in B_{2 \rho_j} (x_j)} Z \bigg( k_j^\frac{n + 2 \sigma}{4 \sigma}, \sum\limits_{i = 1, i \neq j}^{i_0} u_i (x) \bigg) > w(0) ~~~~~~ \textmd{for} ~ 1 \leq j \leq i_0.
\end{equation}
By Lemma \ref{sim},
\begin{equation}\label{ZkjMj}
Z \Bigg( k_j^\frac{n + 2 \sigma}{4 \sigma}, \frac{1}{ 2 M_j^\frac{\beta (n - 2 \sigma) }{4 \sigma} } \Bigg) \sim \frac{1}{1 - k_j} \frac{1}{ M_j^\frac{\beta (n - 2 \sigma)}{4 \sigma} }\sim M_j^\frac{(1 - \beta) (n - 2 \sigma)}{4 \sigma} ~~~~~~ \textmd{for} ~ j \geq 1.
\end{equation}
Therefore, by increasing each term of the sequence $\{ k_j \}_{j = 1}^\infty$, we also have
$$
Z \Bigg( k_j^\frac{n + 2 \sigma}{4 \sigma}, \frac{1}{ 2 M_j^\frac{\beta (n - 2 \sigma)}{4 \sigma} } \Bigg) > w(0) ~~~~~~ \textmd{for} ~ j \geq 1.
$$
Then, by \eqref{defla}, \eqref{defab} and \eqref{defep} we have for $j \geq 1$ and $|x - x_j| \geq \rho_j$ that 
\begin{equation}\label{wxj}
u_j (x) \leq \varepsilon_j a^\frac{n - 2 \sigma}{4 \sigma} w(0) \leq w(0) < Z \Bigg( k_j^\frac{n + 2 \sigma}{4 \sigma}, \frac{1}{ 2 M_j^\frac{\beta (n - 2 \sigma) }{4 \sigma} } \Bigg).
\end{equation}

\vskip0.10in

{\it Step 3.  Estimate the sum of the bubbles $u_i$.} By \eqref{defla} and \eqref{defep} we have 
\begin{equation}\label{uil}
u_i \leq \varepsilon_i a^\frac{n - 2 \sigma}{4 \sigma} w ~~~~~~ \textmd{in} ~ \R^n \setminus B_{\rho_i} (x_i)
\end{equation}
and
\begin{equation}\label{uleqw}
\sum\limits_{i = 1}^\infty u_i \leq a^\frac{n - 2 \sigma}{4 \sigma} w ~~~~~~ \textmd{in} ~ \R^n - \bigcup\limits_{i = 1}^\infty B_{\rho_i} (x_i).
\end{equation}
By \eqref{defep} and \eqref{uil}, we know that $\sum_{i = 1}^\infty u_i \in C^\infty (\R^n \setminus \{ 0 \})$. Moreover, we claim that

\begin{lemma}\label{SR} $\sum_{i = 1}^\infty u_i \in L^\frac{n + 2 \sigma}{n - 2 \sigma} (\R^n) \cap L_\sigma (\R^n)$ and satisfies 
\begin{equation}\label{Lsui}
(- \Delta)^\sigma \bigg( \sum\limits_{i = 1}^\infty u_i \bigg) = \sum\limits_{i = 1}^\infty u_i^\frac{n + 2 \sigma}{n - 2 \sigma} ~~~~~~ \textmd{in} ~ \R^n \setminus \{0\}.
\end{equation}
\end{lemma}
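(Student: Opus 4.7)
My plan is to handle the two integrability claims first and then justify interchanging $(-\Delta)^\sigma$ with the infinite sum by a dominated-convergence argument on partial sums.

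For $v := \sum_i u_i \in L^{(n+2\sigma)/(n-2\sigma)}(\R^n)$, I would decompose $\R^n$ into the disjoint union $\bigl(\bigcup_i B_{\rho_i}(x_i)\bigr) \cup \bigl(\R^n \setminus \bigcup_i B_{\rho_i}(x_i)\bigr)$. Outside the balls, \eqref{uleqw} bounds $v$ by $a^{(n-2\sigma)/(4\sigma)} w$, which is in every $L^q$. On a fixed $B_{\rho_j}(x_j)$, the disjointness from \eqref{distB} combined with \eqref{uil} gives $\sum_{i \neq j} u_i \leq a^{(n-2\sigma)/(4\sigma)} w$, so $v \leq u_j + a^{(n-2\sigma)/(4\sigma)} w$ there; a change of variables gives $\int u_j^{(n+2\sigma)/(n-2\sigma)}\, dx = C(n,\sigma)\,\lambda_j^{(n-2\sigma)/2}$, and the third bound of \eqref{3seq2} together with \eqref{defep} forces $\lambda_j$ to decay super-geometrically, so $\sum_j \lambda_j^{(n-2\sigma)/2} < \infty$. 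The same bound $\int u_i(x)(1+|x|^{n+2\sigma})^{-1}\,dx \lesssim \lambda_i^{(n-2\sigma)/2}$, obtained by splitting $\R^n$ into $\{|x| \leq R\}$ and $\{|x| > R\}$ for some $R > \sup_i |x_i|$ and using that $\int (\lambda_i^2 + |\cdot|^2)^{-(n-2\sigma)/2}$ is uniformly bounded on bounded sets while $u_i$ decays like $|x|^{-(n-2\sigma)}$ at infinity, then yields $v \in L_\sigma(\R^n)$.

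For the pointwise equation, fix $x_0 \in \R^n \setminus \{0\}$ and set $r_0 := |x_0|/4$. Since $x_i \to 0$ by \eqref{defxi}, only finitely many of the $x_i$ lie at distance less than $|x_0|/2$ from $x_0$; for the remaining $x_i$, the bound $u_i(x) \leq C\,\lambda_i^{(n-2\sigma)/2}|x - x_i|^{-(n-2\sigma)}$ and analogous bounds for derivatives give
\[
\sum_i \|u_i\|_{C^2(B_{r_0}(x_0))} \;\leq\; C(x_0) \sum_i \lambda_i^{(n-2\sigma)/2} \;<\; \infty.
\]
Hence $v \in C^2(B_{r_0}(x_0))$, so combined with $v \in L_\sigma(\R^n)$ the quantity $(-\Delta)^\sigma v(x_0)$ is well defined via \eqref{FLS} with the principal value removed.

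To identify the value, I would examine the partial sums $v_N := \sum_{i=1}^N u_i$. By linearity and the bubble identity $(-\Delta)^\sigma u_i = u_i^{(n+2\sigma)/(n-2\sigma)}$, one has $(-\Delta)^\sigma v_N(x_0) = \sum_{i=1}^N u_i^{(n+2\sigma)/(n-2\sigma)}(x_0)$. The right side converges to $\sum_i u_i^{(n+2\sigma)/(n-2\sigma)}(x_0)$ by monotonicity. For the left side, I would pass to the limit under the symmetric integral via dominated convergence: on $B_{r_0}$ the uniform $C^2$ bound above gives $|2v_N(x_0) - v_N(x_0+y) - v_N(x_0-y)| \leq C|y|^2$ uniformly in $N$, integrable against $|y|^{-n-2\sigma}$ since $2\sigma < 2$; on $\R^n \setminus B_{r_0}$, the monotonicity $0 \leq v_N \leq v$ provides the integrable dominator $(2v(x_0) + v(x_0+y) + v(x_0-y))|y|^{-n-2\sigma}$, whose integrability reduces to $v \in L_\sigma$ after observing that $|y|^{-n-2\sigma}$ and $(1+|x_0+y|^{n+2\sigma})^{-1}$ are comparable outside $B_{r_0}$ up to a constant depending on $x_0$.

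The main obstacle is the uniform $C^2$ bound for the tail of the series on $B_{r_0}(x_0)$: the centers $x_i$ accumulate at the origin, so I must exploit $x_0 \neq 0$ to separate the finitely many bubbles whose centers are near $x_0$ (each individually smooth, treated pointwise) from the infinite tail, whose summability rests on the extremely fast decay of $\lambda_i$ enforced by \eqref{3seq2}.
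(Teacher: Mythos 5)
Your proposal is correct and follows essentially the same three-stage plan as the paper: establish the two integrability properties, then justify differentiating the series term by term via dominated convergence applied to the partial sums, splitting the integral in \eqref{FLS} into the near region (where local $C^2$ regularity gives the $|y|^2$ bound) and the far region (where $L_\sigma$ membership gives the dominator). The only divergence is in the $L_\sigma$ step, where the paper simply invokes H\"older's inequality together with $\sum u_i \in L^{(n+2\sigma)/(n-2\sigma)}$ and the fact that $(1+|x|^{n+2\sigma})^{-1} \in L^{(n+2\sigma)/(4\sigma)}(\R^n)$, whereas you give a direct pointwise estimate $\int u_i(x)(1+|x|^{n+2\sigma})^{-1}\,dx \lesssim \lambda_i^{(n-2\sigma)/2}$ and sum over $i$; both are valid, and yours is slightly more self-contained at the cost of a few more lines, while also spelling out more explicitly why the tail of the series converges in $C^2_{loc}(\R^n\setminus\{0\})$ (separation of far centers plus the rapid decay of $\lambda_i$ forced by \eqref{3seq2} and \eqref{defep}), a point the paper treats as immediate.
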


\begin{proof} By \eqref{uil} and \eqref{uleqw}, we have
$$
\aligned
& \int_{\R^n} \bigg( \sum\limits_{i = 1}^\infty u_i (x) \bigg)^\frac{n + 2 \sigma}{n - 2 \sigma} dx \\
\leq & ~ \int_{\R^n - \bigcup\limits_{i = 1}^\infty B_{\rho_i} (x_i)} \Big( a^\frac{n - 2 \sigma}{4 \sigma} w \Big)^\frac{n + 2 \sigma}{n - 2 \sigma} dx + \sum\limits_{i = 1}^\infty \int_{B_{\rho_i} (x_i)} \Big( u_i (x) + a^\frac{n - 2 \sigma}{4 \sigma} w \Big)^\frac{n + 2 \sigma}{n - 2 \sigma} dx \\ 
\leq & ~ 2^\frac{n + 2 \sigma}{n - 2 \sigma} a^\frac{n + 2 \sigma}{4 \sigma} \int_{\R^n} w(x)^\frac{n + 2 \sigma}{n - 2 \sigma} dx + (2 c_{n, \sigma})^\frac{n + 2 \sigma}{n - 2 \sigma} \sum\limits_{i = 1}^\infty \int_{ B_{\rho_i} } \left( \frac{\lambda_i}{\lambda_i^2 + |x|^2} \right)^\frac{n + 2 \sigma}{2} dx \\
\leq & ~ C + C \sum\limits_{i = 1}^\infty \lambda_i^\frac{n - 2 \sigma}{2} \int_{B_{\rho_i/\lambda_i}} \left( \frac{1}{1 + |y|^2} \right)^\frac{n + 2 \sigma}{2} dy \\
 \leq & ~ C + C \sum\limits_{i = 1}^\infty \lambda_i^\frac{n - 2 \sigma}{2} < \infty.
\endaligned
$$
Therefore, $\sum_{i = 1}^\infty u_i \in L^\frac{n + 2 \sigma}{n - 2 \sigma} (\R^n)$. By H\"older's inequality, we have $\sum_{i = 1}^\infty u_i \in L_\sigma (\R^n)$.

Fix $x \in \R^n \setminus \{ 0 \}$. Since $\sum_{i = 1}^N u_i$ converges to $\sum_{i = 1}^\infty u_i$ in $C_{loc}^2 (\R^n \setminus \{ 0 \})$, we have
$$
\Bigg| 2 \sum_{i = 1}^N u_i (x) - \sum_{i = 1}^N u_i (x + y) - \sum_{i = 1}^N u_i (x - y) \Bigg| \leq C |y|^2 ~~~~~~ \textmd{for} ~ y \in B_{|x|/2},
$$
where $C$ is a positive constant when $N$ is sufficiently large, and
$$
\aligned
& |y|^{- n - 2 \sigma} \Bigg| 2 \sum_{i = 1}^N u_i (x) - \sum_{i = 1}^N u_i (x + y) - \sum_{i = 1}^N u_i (x - y) \Bigg| \\
\leq & ~ |y|^{- n - 2 \sigma} \bigg( 2 \sum_{i = 1}^\infty u_i (x) + \sum_{i = 1}^\infty u_i (x + y) + \sum_{i = 1}^\infty u_i (x - y) \bigg)
\endaligned
$$
for $y \in \R^n \setminus ( B_{|x|/2} \cup \{ x, - x \} )$. Since $\sum_{i = 1}^\infty u_i \in L_\sigma (\R^n)$, by Lebesgue's dominated convergence theorem, we get \eqref{Lsui}.
\end{proof}

Now, by increasing $k_i$ for each $i$, we can assume that
\begin{equation}\label{uixi}
u_i (x_i) = c_{n, \sigma} \lambda_i^{ - \frac{n - 2 \sigma}{2} } > i \varphi (|x_i|) ~~~~~~ \textmd{for} ~ i \geq 1
\end{equation} 
and $u_i + |\nabla u_i| < 2^{- i}$ in $\R^n \setminus B_{2 r_i} (x_i)$, $i \geq 1$. Thus by \eqref{defBi2} and \eqref{defBi3},
\begin{equation}\label{ujna}
u_i + |\nabla u_i| < 2^{- i} ~~~~~~ \textmd{in} ~ B_{2 r_j} (x_j) 
\end{equation}
when $i \neq j$ and either $i \geq 1$ and $j > i_0$ or $i > i_0$ and $1 \leq j \leq i_0$.  Again, by increasing $k_i$ for $i > i_0$, we can force $u_i$ and $M_i$ to satisfy
\begin{equation}\label{sinu1}
\sum\limits_{i = i_0 + 1}^\infty u_i (x) < \frac{1}{2} \min\limits_{1 \leq i \leq i_0} u_i (x) ~~~~~~ \textmd{for} ~ |x| \geq \delta_2,
\end{equation}
\begin{equation}\label{sumu12}
\sum\limits_{i = i_0 + 1, i \neq j}^\infty u_i (x) < \frac{u_1}{2} ~~~~~~ \textmd{in} ~ B_{2 r_j} (x_j), ~ j > i_0
\end{equation}
and
\begin{equation}\label{Minu1}
\frac{1}{ M_j^\frac{\beta (n - 2 \sigma)}{4 \sigma} } < \min\limits_{|x| \leq \delta} u_1 (x) ~~~~~~ \textmd{for} ~ j > i_0.
\end{equation}

It follows from \eqref{ujna} and \eqref{uil} that
\begin{equation}\label{uinC}
\sum\limits_{i = 1, i \neq j}^\infty u_i + u_i^\frac{n + 2 \sigma}{n - 2 \sigma} \leq C ~~~~~~ \textmd{in} ~ B_{\rho_j} (x_j), ~ j \geq 1.
\end{equation}
Similarly, by \eqref{ujna}, \eqref{uil}, Lemma \ref{sim} and \eqref{defi0},
$$
\aligned
& \sum\limits_{i = 1, i \neq j}^\infty |\nabla u_i| + u_i^\frac{4 \sigma}{n - 2 \sigma} |\nabla u_i| \leq \sum\limits_{i = 1, i \neq j}^{i_0} |\nabla u_i| + u_i^\frac{4 \sigma}{n - 2 \sigma} |\nabla u_i| + C \\
& ~~~ \leq C \sum\limits_{i = 1, i \neq j}^{i_0} u_i \frac{1}{\rho_j} + u_i^\frac{n + 2 \sigma}{n - 2 \sigma} \frac{1}{\rho_j} + C \\
& ~~~ \leq C 2^\frac{j}{2 \sigma} M_j^\frac{1}{2 \sigma} \leq C 2^\frac{i_0}{2 \sigma} M_j^\frac{1}{2 \sigma} \leq C M_j^\frac{1}{2 \sigma} ~~~~~~ \textmd{in} ~ B_{\rho_j} (x_j), ~ 1 \leq j \leq i_0,  
\endaligned
$$
and by \eqref{ujna} and \eqref{uil},
$$
\sum\limits_{i = 1, i \neq j}^\infty |\nabla u_i| + u_i^\frac{4 \sigma}{n - 2 \sigma} |\nabla u_i| \leq C ~~~~~~ \textmd{in} ~ B_{\rho_j} (x_j), ~ j > i_0.
$$
Thus, we get
\begin{equation}\label{laui}
\sum\limits_{i = 1, i \neq j}^\infty |\nabla u_i| + u_i^\frac{4 \sigma}{n - 2 \sigma} |\nabla u_i| \leq C M_j^\frac{1}{2 \sigma} ~~~~~~ \textmd{in} ~ B_{\rho_j} (x_j), ~ j \geq 1. 
\end{equation}

\vskip0.10in

{\it Step 4. Construct the correction function $u_0$.} Since $n > 2 \sigma + 3$, by Lemma \ref{sim} and \eqref{3seq1} we have
\begin{equation}\label{1kir}
\frac{1 - k_i}{\rho_i} \sim \frac{ 2^\frac{i}{2 \sigma} M_i^\frac{1}{2 \sigma} }{ M_i^\frac{n - 2 \sigma}{4 \sigma} } \sim \frac{ 2^\frac{i}{2 \sigma} }{ M_i^\frac{n - 2 \sigma - 2}{4 \sigma} } \leq \frac{ 2^\frac{i}{2 \sigma} }{ M_i^\frac{1}{4 \sigma} } \leq \left( \frac{2}{3} \right)^\frac{i}{2 \sigma} \to 0 ~~~~~~ \textmd{as} ~ i \to \infty.
\end{equation}
Let $\eta : [0, \infty) \to [0, 1]$ be a $C^\infty$ cut-off function satisfying $\eta (t) = 1$ for $0 \leq t \leq 1$ and $\eta (t) = 0$ for $t \geq 3/2$. Define
\begin{equation}\label{defka}
\kappa (x) = k(x) + \sum\limits_{i = 1}^\infty ( k_i - k(x) ) \eta_i (x) ~~~~~~ \textmd{for} ~ x \in \R^n, 
\end{equation}
where $\eta_i (x) : = \eta (|x - x_i|/\rho_i)$. Since $\{ \eta_i \}_{i = 1}^\infty$ have disjoint supports contained in $B_{2 \delta_1} \setminus \{ 0 \}$, $\kappa$ is well-defined. Recall that $k = 1$ in $B_{\delta}$, we obtain $\kappa (0) = k(0) = 1$, $\kappa \leq k$ in $\R^n$ and $\kappa (x) = k(x)$ for $|x| \geq 2 \delta_1$. By \eqref{defab} and \eqref{defki} we have
\begin{equation}\label{infka}
\inf\limits_{\R^n} \kappa \geq a.
\end{equation}
Since $k \equiv 1$ in $B_\delta$, then 
\begin{equation}\label{blaka}
\nabla \kappa (x) = \sum\limits_{i = 1}^\infty \frac{k_i - 1}{\rho_i} \eta' \left( \frac{|x - x_i|}{\rho_i} \right) \frac{x - x_i}{|x - x_i|} ~~~~~~ \textmd{for} ~ x \in B_\delta,
\end{equation}
it follows from \eqref{1kir} that $\kappa \in C^1 (\R^n)$ and $\nabla \kappa (0) = 0$.

By \eqref{defrh},
\begin{equation}\label{IsMw2}
0 < \mathcal{I}_{2 \sigma} \overline{M} < \frac{w}{2} ~~~~~~ \textmd{in} ~ \R^n, 
\end{equation}
where
$$
\overline{M} (x) : =
\begin{cases}
( 2 w(0) )^\frac{n + 2 \sigma}{n - 2 \sigma} M_i ~~~~~~ & \textmd{in} ~ B_{\rho_i} (x_i), ~ i \geq 1, \\
0 ~~~~~~ & \textmd{in} ~ \R^n - \bigcup\limits_{i = 1}^\infty B_{2 \rho_i} (x_i), \\
( 2 w(0) )^\frac{n + 2 \sigma}{n - 2 \sigma} M_i \left( 2 - \frac{|x - x_i|}{\rho_i} \right) ~~~~~~ & \textmd{in} ~ B_{2 \rho_i} (x_i) \setminus B_{\rho_i} (x_i), ~ i \geq 1. 
\end{cases}
$$
Since $\overline{M}$ is locally Lipschitz continuous in $\R^n \setminus \{ 0 \}$ and $\overline{M} \in L^\frac{n}{2 \sigma} (\R^n)$, we have 
$$
\overline{v} : = \frac{w}{2 b} +\mathcal{I}_{2 \sigma} \overline{M} \in C_{loc}^{2 \sigma + \gamma}(\R^n \setminus \{ 0 \})
$$
for any $\gamma \in (0, 1)$. It follows from \eqref{supw0} and \eqref{IsMw2} that
\begin{equation}\label{w2bvw}
\frac{w}{2b} < \overline{v} < w < c_{n, \sigma} ~~~~~~ \textmd{in} ~ \R^n.
\end{equation}
Hence, we get $\overline{v} \in L_\sigma (\R^n)$. By \eqref{Lsw}, 
\begin{equation}\label{defov}
(- \Delta)^\sigma \overline{v} = (2 b)^\frac{n + 2 \sigma}{n - 2 \sigma} w^\frac{n + 2 \sigma}{n - 2 \sigma} + \overline{M} ~~~~~~ \textmd{in} ~ \R^n \setminus \{ 0 \}.
\end{equation}

Define $\underline{H} : \R^n \times [0, \infty) \to \R$ by
\begin{equation}\label{underH}
\underline{H} (x, v) = \kappa (x) \bigg( v + \sum\limits_{i = 1}^\infty u_i (x) \bigg)^\frac{n + 2 \sigma}{n - 2 \sigma} - \sum\limits_{i = 1}^\infty u_i (x)^\frac{n + 2 \sigma}{n - 2 \sigma}. 
\end{equation}
Then we have
$$
\underline{H} (x, v) = f( \widetilde{u} (x), \kappa (x), p(x, v) ), 
$$
where
$$
\widetilde{u} (x) : = \bigg( \sum\limits_{i = 1}^\infty u_i (x)^\frac{n + 2 \sigma}{n - 2 \sigma} \bigg)^\frac{n - 2 \sigma}{n + 2 \sigma} ~~~~~~ \textmd{and} ~~~~~~ p(x, v) : =  v + \sum\limits_{i = 1}^\infty u_i (x) - \widetilde{u} (x).
$$

Define $H : \R^n \times [0, \infty) \to (0, \infty)$ by
\begin{equation}\label{defH}
H(x, v) = F( \widetilde{u} (x), \kappa (x),  p(x, v) ).
\end{equation}
Then
\begin{equation}\label{HleqM}
H(x,v) \leq M( \kappa (x), p(x, v) ) ~~~~~~ \textmd{when} ~ \kappa (x) < 1.
\end{equation}
Moreover, by the definition of $F$ we have that $H(x, v) = \underline{H} (x, v)$ if and only if either $\kappa (x) < 1$ and $\widetilde{u} (x) \leq Z(\kappa (x), p(x,v))$ or $\kappa (x) \geq 1$.

For $x \in \R^n - \bigcup\limits_{i = 1}^\infty B_{\rho_i} (x_i)$ and $\kappa (x) < 1$, we have
$$
\aligned
\widetilde{u} (x) & \leq \sum\limits_{i = 1}^\infty u_i (x) \leq a^\frac{n - 2 \sigma}{4 \sigma} w(x) ~~~~~~ \textmd{by} ~ \eqref{uleqw} \\
& \leq \frac{ w(x) \kappa (x)^\frac{n - 2 \sigma}{4 \sigma} }{ 1 - \kappa (x)^\frac{n - 2 \sigma}{4 \sigma} } ~~~~~~ \textmd{by} ~ \eqref{infka} \\
& \leq \frac{ p( x, w(x) ) \kappa (x)^\frac{n - 2 \sigma}{4 \sigma} }{ 1 - \kappa (x)^\frac{n - 2 \sigma}{4 \sigma} } \\
& = Z( \kappa (x), p( x, w(x) ) ) ~~~~~~ \textmd{by} ~ \eqref{defMZ}.
\endaligned
$$
Hence
$$
H( x, w(x) ) = \underline{H} ( x, w(x) ) ~~~~~~ \textmd{for} ~ x \in \R^n - \bigcup\limits_{i = 1}^\infty B_{\rho_i} (x_i).
$$
Thus, for $x \in (\R^n \setminus \{ 0 \}) - \bigcup\limits_{i = 1}^\infty B_{\rho_i} (x_i)$ and $0 \leq v \leq w(x)$  we have
\begin{equation}\label{Hxv}
\aligned
H(x, v) \leq H( x, w(x) ) = \underline{H} ( x, w(x) ) & \leq \kappa (x) \bigg( w(x) + \sum\limits_{i = 1}^\infty u_i (x) \bigg)^\frac{n + 2 \sigma}{n - 2 \sigma} \\
& \leq b ( 2 w(x) )^\frac{n + 2 \sigma}{n - 2 \sigma} \leq (- \Delta)^\sigma \overline{v} (x)
\endaligned
\end{equation}
by \eqref{uleqw}, \eqref{defab} and \eqref{defov}.

For $x \in B_{\rho_i} (x_i)$ and $i \geq 1$ we have $\kappa (x) \equiv  k_i < 1$. Hence, from \eqref{HleqM} we obtain for $x \in B_{\rho_i} (x_i)$ and $0 \leq v \leq w(x)$ that 
\begin{equation}\label{Hxv2}
\aligned
H(x, v) & \leq M ( k_i, p(x, v) ) = \frac{ k_i p(x, v)^\frac{n + 2 \sigma}{n - 2 \sigma} }{ \Big( 1 - k_i^\frac{n - 2 \sigma}{4 \sigma} \Big)^\frac{4 \sigma}{n - 2 \sigma} } \\
& \leq M_i \bigg( v + \sum\limits_{j = 1}^\infty u_j (x) - \widetilde{u} (x) \bigg)^\frac{n + 2 \sigma}{n - 2 \sigma} ~~~~~~ \textmd{by} ~ \eqref{defMi} \\
& \leq M_i \bigg( v + \sum\limits_{j = 1, j \neq i}^\infty u_j (x) \bigg)^\frac{n + 2 \sigma}{n - 2 \sigma} \\
& \leq M_i ( 2 w(x) )^\frac{n + 2 \sigma}{n - 2 \sigma} ~~~~~~ \textmd{by} ~ \eqref{uil} \\
& \leq M_i ( 2 w(0) )^\frac{n + 2 \sigma}{n - 2 \sigma} = \overline{M} (x) \leq (- \Delta)^\sigma \overline{v} (x) ~~~~~~ \textmd{by} ~ \eqref{defov}. 
\endaligned
\end{equation}
This together with \eqref{Hxv} implies that
\begin{equation}\label{sups}
H(x, v) \leq (- \Delta)^\sigma \overline{v} (x) ~~~~~~ \textmd{for} ~ x \in \R^n \setminus \{ 0 \} ~ \textmd{and} ~ 0 \leq v \leq w(x).
\end{equation}

Hence, by the non-negativity of $H$, \eqref{w2bvw} and \eqref{sups} we can use  $\underline{v} \equiv 0$ and $\overline{v}$ as sub- and super-solutions of the problem 
$$
(- \Delta)^\sigma u = H(x, u) ~~~~~~ \textmd{in} ~ \R^n \setminus \{ 0 \}.
$$
Now, applying the method of sub- and super-solutions, we can get the correction function $u_0$ by the following two lemmas.

\begin{lemma}\label{subm} There exists a $C^\sigma (\R^n) \cap C^{2 \sigma + \gamma} (B_2 \setminus \overline{B_1})$ solution $v$ to 
\begin{equation}\label{v-B2B1}
\left\{
\aligned
& (- \Delta)^\sigma v = H(x, v) ~~~~~~ & \textmd{in} & ~ B_2 \setminus \overline{B_1}, \\
& v = 0 & \textmd{on} & ~ B_2^c \cup \overline{B_1}
\endaligned
\right.
\end{equation}
for every $\gamma \in (0, 1)$.
\end{lemma}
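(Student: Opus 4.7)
The strategy is a classical monotone iteration using the sub/super-solution pair highlighted just before the lemma: $\underline{v} \equiv 0$, which is a subsolution because $H \geq 0$, and the function $\overline{v}$ defined by \eqref{defov}, which is a supersolution on the annulus by combining \eqref{sups} with the bound $\overline{v} \leq w$ from \eqref{w2bvw}. Both satisfy the Dirichlet side condition $\underline{v} \leq 0 \leq \overline{v}$ outside $B_2 \setminus \overline{B_1}$. Since the annulus is compactly contained in $\R^n \setminus \{0\}$, the series $\sum u_i$ and $\widetilde{u}$ from Step 4 are $C^1$ there by \eqref{ujna}--\eqref{uil}, and since $\kappa \in C^1(\R^n)$ and $F \in C^1$, the function $H$ in \eqref{defH} is $C^1$ on $(B_2 \setminus \overline{B_1}) \times [0,\infty)$ and in particular Lipschitz in $v$ on $[0, \|\overline{v}\|_{L^\infty}]$ uniformly in $x$.

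Next I would choose $c > 0$ so large that $v \mapsto H(x, v) + c v$ is non-decreasing on $[0, \|\overline{v}\|_{L^\infty}]$ uniformly in $x$, set $v_0 := \overline{v}$ on $\R^n$, and define inductively
\begin{equation*}
\left\{
\begin{aligned}
(-\Delta)^\sigma v_{k+1} + c\, v_{k+1} & = H(x, v_k) + c\, v_k && \textmd{in } B_2 \setminus \overline{B_1}, \\
v_{k+1} & = 0 && \textmd{on } B_2^c \cup \overline{B_1},
\end{aligned}
\right.
\end{equation*}
obtaining $v_{k+1}$, together with the global bound $\|v_{k+1}\|_{C^\sigma(\R^n)} \leq C \|H(\cdot, v_k) + c v_k\|_{L^\infty}$, from the Dirichlet theory for the fractional Laplacian on $C^{1,1}$ domains (Ros-Oton--Serra). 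The maximum principle for $(-\Delta)^\sigma + cI$, applied to $v_0 - v_1$ (whose inhomogeneity is non-negative in the annulus by \eqref{sups} and whose exterior data $\overline{v} \geq 0$) and to $v_1$ itself, gives $0 \leq v_1 \leq v_0$; induction together with the monotonicity of $H + cI$ then yields the sandwich $0 \leq v_{k+1} \leq v_k \leq \overline{v}$ in $\R^n$. The pointwise monotone limit $v := \lim_k v_k$ therefore exists with $0 \leq v \leq \overline{v}$, and the uniform $C^\sigma(\R^n)$ bound on the $v_k$ upgrades the convergence to uniform convergence on $\R^n$ via Arzelà--Ascoli, giving $v \in C^\sigma(\R^n)$.

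Passing to the limit in the iteration, using uniform continuity of $H$ on $(B_2 \setminus \overline{B_1}) \times [0, \|\overline{v}\|_\infty]$ and Lebesgue dominated convergence for the singular integral defining $(-\Delta)^\sigma$ (justified by the uniform bound on $v_k$ and the integrability of $\overline{v}/(1+|y|^{n+2\sigma})$), one checks that $v$ is a distributional solution of \eqref{v-B2B1}. Interior regularity then follows from the usual Schauder bootstrap for the fractional Dirichlet problem on the annulus: since $H$ is $C^1$ in $(x,v)$ there, the composition $H(\cdot, v(\cdot))$ inherits the Hölder regularity of $v$ on compact subdomains, so that $v \in C^\sigma \Rightarrow H(\cdot, v) \in C^\sigma_{\mathrm{loc}} \Rightarrow v \in C^{2\sigma + \sigma'}_{\mathrm{loc}}$, and iterating pushes the interior regularity to $C^{2\sigma + \gamma}(B_2 \setminus \overline{B_1})$ for any $\gamma \in (0,1)$. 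The main technical obstacle I would expect is the bookkeeping in this bootstrap, namely keeping the global regularity at $C^\sigma$ (which cannot be improved across $\partial(B_2 \setminus \overline{B_1})$) while pushing the interior regularity arbitrarily close to $2\sigma + 1$; however, since the conclusion is only $C^{2\sigma + \gamma}$ in the open annulus, no boundary issue arises and the argument is standard.
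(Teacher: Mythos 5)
Your argument follows essentially the same monotone-iteration strategy with Ros-Oton--Serra regularity that the paper uses, differing only in iterating downward from $\overline{v}$ rather than upward from $0$ and in adding a shift $c\,v$ to the operator. The latter is in fact superfluous: $H(x,\cdot)=F(\widetilde{u}(x),\kappa(x),p(x,\cdot))$ is already non-decreasing in $v$ because $F$ is non-decreasing in its third argument and $p(x,v)$ is increasing in $v$, so the iteration is monotone without any shift and the cited regularity estimates apply directly to $(-\Delta)^\sigma$ without needing to adapt them to $(-\Delta)^\sigma+cI$.
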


\begin{proof} Let $\{ y_i \}_{i = 1}^\infty$ be the sequence of functions defined in $\R^n$ by the iteration scheme
$$
\left\{
\aligned
& (- \Delta)^\sigma y_{i + 1} = H(x, y_i) ~~~~~~ & \textmd{in} & ~ B_2 \setminus \overline{B_1}, \\
& y_{i+1} = 0 & \textmd{on} & ~ B_2^c \cup \overline{B_1}.
\endaligned
\right.
$$
where $y_0 : = 0$. Then we have 
$$
\left\{
\aligned
& (- \Delta)^\sigma (y_1 - y_0) = H(x, y_0) - (- \Delta)^\sigma y_0 \geq 0 ~~~~~~ & \textmd{in} & ~ B_2 \setminus \overline{B_1}, \\
& y_1 - y_0 = 0 & \textmd{on} & ~ B_2^c \cup \overline{B_1}.
\endaligned
\right.
$$
Hence, $y_1 \geq y_0 = 0$ in $\R^n$ by the maximum principle. If $y_i \geq y_{i - 1}$ for $i \geq 1$, then $(- \Delta)^\sigma (y_{i + 1} - y_i) \geq 0$ in $B_2 \setminus \overline{B_1}$ and $y_{i + 1} - y_i = 0$ on $B_2^c \cup \overline{B_1}$, which implies that $y_{i + 1} \geq y_i$ in $\R^n$. By induction, $\{ y_i \}_{i = 1}^\infty$ is a non-decreasing sequence of functions in $\R^n$. A similar application of the maximum principle for $y_i$ and $\overline{v}$ yields that $y_i \leq \overline{v}$ in $B_2 \setminus \overline{B_1}$ for all $i \geq 1$. Consequently $y_i$ converges point-wise to a limit $v$ in $\R^n$. Moreover, for every $i \geq 1$, by \eqref{w2bvw},
$$
0 \leq H(x, y_i) \leq H(x, \overline{v}) \leq H(x, c_{n, \sigma}) < \infty ~~~~~~ \textmd{in} ~ B_2 \setminus \overline{B_1}.
$$
By \cite[Proposition 1.1]{Ro-14}, we have $y_i \in C^\sigma (\R^n)$ and
\begin{equation}\label{yi-Csi}
\| y_i \|_{C^\sigma (\R^n)} \leq C \sup\limits_{x \in \overline{B_2} \setminus B_1} H(x, c_{n, \sigma}) < \infty ~~~~~~ \textmd{for all} ~ i \geq 1
\end{equation}
where $C$ is a constant depending only on $n$ and $\sigma$. Since $H \in C^1(\overline{B_2} \setminus B_1 \times [0, c_{n, \sigma}])$, by \eqref{yi-Csi} and \cite[Proposition 1.4]{Ro-14}, $y_i \in C^{2 \sigma + \gamma} (B_2 \setminus \overline{B_1})$ for any $\gamma \in (0, 1)$. Moreover, for any compact set $K \subset B_2 \setminus \overline{B_1}$, there exists $C = C(K)$ such that $\| y_i \|_{C^{2 \sigma + \gamma} (K)} \leq C$ for all $i \geq 1$. Thus,
$$
y_i \to v ~~~~~~ \textmd{in} ~ C^{2 \sigma + \gamma}_{loc} (B_2 \setminus \overline{B_1}).
$$
It implies that $v$ satisfies \eqref{v-B2B1}. Recall that $v \leq \overline{v}$ in $B_2 \setminus \overline{B_1}$, by \cite[Corollary 1.6]{Ro-14}, $v \in C^\sigma (\R^n) \cap C^{2 \sigma + \gamma} (B_2 \setminus \overline{B_1})$ for every $\gamma \in (0, 1)$.
\end{proof}

Now we can construct the desired function $u_0$.

\begin{lemma}\label{supm} There exists a $C^{2 \sigma + \gamma} (\R^n \setminus \{ 0 \})$ solution $u_0$ to  
\begin{equation}\label{defu0}
\left\{
\aligned
& (- \Delta)^\sigma u_0 = H(x, u_0) \\
& 0 \leq u_0 \leq \overline{v} \leq w
\endaligned
\right. ~~~~~~ \textmd{in} ~ \R^n \setminus \{ 0 \}.
\end{equation}
\end{lemma}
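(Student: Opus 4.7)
The plan is to extend the sub/super-solution iteration of Lemma \ref{subm} from the bounded annulus $B_2 \setminus \overline{B_1}$ to all of $\R^n \setminus \{0\}$ by exhausting the punctured space with annular domains and passing to a monotone limit. Fix sequences $r_k \downarrow 0$ and $R_k \uparrow \infty$ and set $\Omega_k := B_{R_k} \setminus \overline{B_{r_k}}$. On each $\Omega_k$ I would run the iteration of Lemma \ref{subm} starting from $y_0^{(k)} \equiv 0$ and with zero Dirichlet data on $\R^n \setminus \Omega_k$, producing a function $v_k \in C^\sigma(\R^n) \cap C^{2\sigma + \gamma}(\Omega_k)$ that solves
\begin{equation*}
(-\Delta)^\sigma v_k = H(x, v_k) \text{ in } \Omega_k, \qquad v_k = 0 \text{ on } \R^n \setminus \Omega_k,
\end{equation*}
and satisfies $0 \le v_k \le \overline v$. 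The upper bound $v_k \le \overline v$ propagates along the iteration: if $y_i^{(k)} \le \overline v$, then by monotonicity of $H$ in its second argument and the super-solution inequality \eqref{sups}, the difference $y_{i+1}^{(k)} - \overline v$ is $\sigma$-superharmonic in $\Omega_k$ and non-positive on $\R^n \setminus \Omega_k$, so the fractional maximum principle closes the induction.

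Next I would prove that $\{v_k\}$ is non-decreasing in $k$. For $\ell > k$ the function $v_\ell$ solves the same semilinear equation in $\Omega_k \subset \Omega_\ell$, while $v_\ell \ge 0 = v_k$ on $\R^n \setminus \Omega_k$. Testing $(v_k - v_\ell)^+$ against the weak formulation of $(-\Delta)^\sigma (v_k - v_\ell)$ and exploiting the monotonicity of $H(x, \cdot)$ together with the uniform bound $v_k, v_\ell \le \overline v \le c_{n, \sigma}$ (which makes all nonlocal tail integrals finite) forces $(v_k - v_\ell)^+ \equiv 0$. Hence $v_k \le v_\ell$ in $\R^n$, and therefore $u_0(x) := \lim_{k \to \infty} v_k(x)$ exists pointwise and satisfies $0 \le u_0 \le \overline v \le w$.

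To upgrade this monotone convergence and to verify the equation on $\R^n \setminus \{0\}$, I would fix a compact set $K \subset \R^n \setminus \{0\}$ and observe that $\widetilde u$, $\sum_i u_i$ and $\kappa$ are all bounded on $K$ (since the centers $x_i$ accumulate only at the origin and by \eqref{uil} the bubbles decay uniformly away from their centers), so $H(\cdot, v)$ is uniformly bounded on $K \times [0, c_{n, \sigma}]$. Choosing $k$ so that $K$ is compactly contained in $\Omega_k$ and invoking the interior Schauder theory of \cite{Ro-14} exactly as in the proof of Lemma \ref{subm} gives uniform $C^{2\sigma + \gamma}(K)$ bounds on $v_k$. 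Arzelà--Ascoli then yields $v_k \to u_0$ in $C^{2\sigma + \gamma}_{\mathrm{loc}}(\R^n \setminus \{0\})$ along a subsequence, and passage to the limit in the equation produces $(-\Delta)^\sigma u_0 = H(x, u_0)$ in $\R^n \setminus \{0\}$, completing the construction.

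The main obstacle I anticipate is the nonlocal comparison step $v_k \le v_\ell$: unlike the local case, the test-function argument must carefully handle the singular kernel in $(-\Delta)^\sigma$, and one must verify that $(v_k - v_\ell)^+$ has enough integrability for the corresponding bilinear form to be defined and that the monotonicity of $H$ in $v$ (rather than a uniform Lipschitz bound) suffices to conclude. A secondary technical point is that the interior regularity estimates used to upgrade convergence must be uniform in $k$, depending only on $K$ and the uniform bound $\|v_k\|_{L^\infty(\R^n)} \le c_{n, \sigma}$ rather than on the size of $\Omega_k$ or its distance to the origin.
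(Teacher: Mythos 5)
Your exhaustion-by-annuli construction, the iteration from Lemma \ref{subm} with the uniform upper barrier $\overline v$, and the uniform interior $C^{2\sigma+\gamma}$ estimates from \cite{Ro-14} followed by a compactness/diagonal argument are exactly the paper's proof, with the paper using the specific annuli $B_i\setminus\overline{B_{1/i}}$; the monotonicity-in-$k$ step you add is correct but unnecessary, since the paper just passes to a subsequence. The one point you gloss over, and which the paper spells out, is the passage to the limit inside the nonlocal operator: $C^{2\sigma+\gamma}_{\mathrm{loc}}$ convergence alone does not give $(-\Delta)^\sigma v_k(x)\to(-\Delta)^\sigma u_0(x)$ because of the global tail, so the paper splits the defining integral at $|y|=|x|/2$, uses the uniform local regularity near $x$ and the bound $0\le v_k\le\overline v\in L_\sigma(\R^n)$ for the tail, and invokes dominated convergence; also, your anticipated ``obstacle'' with the comparison $v_k\le v_\ell$ is not really there, since $v_\ell$ is itself a supersolution of the problem on $\Omega_k$ with $v_\ell\ge 0$ off $\Omega_k$, so the same maximum-principle induction you already used to propagate $y_i^{(k)}\le\overline v$ gives $y_i^{(k)}\le v_\ell$ for all $i$, with no need for the bilinear-form energy estimate you were worried about.
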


\begin{proof} For each positive integer $i \geq 2$, we consider about the following problem  
\begin{equation}\label{loceq}
\left\{
\aligned
& (- \Delta)^\sigma v = H(x, v) ~~~~~~ & \textmd{in} & ~ B_i \setminus \overline{ B_{1/i} }, \\
& v = 0 & \textmd{on} & ~ B_i^c \cup \overline{ B_{1/i} }.
\endaligned
\right.
\end{equation}
Using the same argument in Lemma \ref{subm}, \eqref{loceq} has a non-negative solution $v_i$ such that $0 \leq v_i \leq \overline{v}$ in $\R^n$ and $v_i \in C^\sigma (\R^n) \cap C_{loc}^{2 \sigma + \gamma} (B_i \setminus \overline{ B_{1/i} })$. Moreover, $\{ v_i \}_{i = 2}^\infty$ has a uniform upper bound $w$. It follows from \cite[Proposition 1.4]{Ro-14} and the standard diagonal argument that, after passing to a subsequence, 
$$
v_i \to u_0 ~~~~~~ \textmd{in} ~ C^{2 \sigma + \gamma}_{loc} (\R^n \setminus \{ 0 \})
$$
for any $\gamma \in (0, 1)$. Since $0 \leq u_0 \leq \overline{v} \leq w$ in $\R^n \setminus \{ 0 \}$, $u_0 \in L_\sigma (\R^n)$. Fixing $x \in \R^n \setminus \{ 0 \}$, we have (pick $\gamma = 1 - \sigma$)
$$
| 2 v_i (x) - v_i (x + y) - v_i (x - y) | \leq C |y|^{1 - \sigma} ~~~~~~ \textmd{for} ~ y \in B_{|x|/2}
$$
where $C$ does not depend on $i$, and
$$
|y|^{- n - 2 \sigma} | 2 v_i (x) - v_i (x + y) - v_i (x - y)| \leq |y|^{- n - 2 \sigma} ( 2 \overline{v} (x) + \overline{v} (x + y) + \overline{v} (x - y) )
$$
for $y \in \R^n \setminus B_{|x|/2}$. Since $\overline{v} \in L_\sigma(\R^n)$, by Lebesgue's dominated convergence theorem, we get \eqref{defu0}.
\end{proof}

\begin{lemma}\label{disu0} Let $u_0 \in C^{2 \sigma + \gamma} (\R^n \setminus \{ 0 \}) \cap L^\infty (\R^n)$ be a solution of \eqref{defu0}.   Then $u_0$ is a distributional solution in $\R^n$, i.e.,
$$
\int_{\R^n} u_0 (- \Delta)^\sigma \phi dx = \int_{\R^n} H(x, u_0) \phi dx ~~~~~~ \textmd{for any} ~ \phi \in C_c^\infty (\R^n). 
$$
\end{lemma}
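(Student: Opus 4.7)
The plan is to approximate a general $\phi \in C_c^\infty(\R^n)$ by test functions supported away from the origin, exploit the pointwise identity $(-\Delta)^\sigma u_0 = H(\cdot, u_0)$ on $\R^n \setminus \{0\}$, and then pass to the limit to show that the singular point does not contribute. Fix $\chi \in C_c^\infty(\R^n; [0,1])$ with $\chi \equiv 1$ on $B_1$ and $\chi \equiv 0$ outside $B_2$, set $\chi_\varepsilon(x) := \chi(x/\varepsilon)$, and define $\phi_\varepsilon := \phi (1 - \chi_\varepsilon) \in C_c^\infty(\R^n \setminus \overline{B_\varepsilon})$.

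As a first step I would establish that for every $\psi \in C_c^\infty(\R^n \setminus \{0\})$,
\[
\int_{\R^n} u_0\, (-\Delta)^\sigma \psi\, dx = \int_{\R^n} H(x, u_0)\, \psi\, dx.
\]
Since $u_0 \in L^\infty(\R^n) \subset L_\sigma(\R^n)$ and $u_0 \in C^{2\sigma+\gamma}$ on a neighborhood of $\mathrm{supp}(\psi)$ (which is bounded away from $0$), the pointwise identity from \eqref{defu0} holds on $\mathrm{supp}(\psi)$. I would use the second-difference formulation $(-\Delta)^\sigma g(x) = \tfrac{C_{n,\sigma}}{2}\int [2g(x) - g(x+y) - g(x-y)]\,|y|^{-n-2\sigma}\,dy$ with $g = \psi$, multiply by $u_0(x)$, apply Fubini, use shift-invariance of Lebesgue measure to transfer the increments onto $u_0$, and then apply Fubini again to recover $\int \psi\, (-\Delta)^\sigma u_0\, dx$. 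Absolute integrability is guaranteed by $|2u_0(x) - u_0(x+y) - u_0(x-y)| \leq C|y|^{\min(2,\, 2\sigma+\gamma)}$ when $x \in \mathrm{supp}(\psi)$ and $|y|$ is small enough that $x \pm y$ stay in the regular set, combined with the crude bound $\leq 4\|u_0\|_\infty$ for large $y$ and the compact support of $\psi$. Applied with $\psi = \phi_\varepsilon$, this yields
\begin{equation}\label{palh}
\int_{\R^n} u_0\, (-\Delta)^\sigma \phi_\varepsilon\, dx = \int_{\R^n} H(x, u_0)\, \phi_\varepsilon\, dx.
\end{equation}

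Next I would send $\varepsilon \to 0^+$ in \eqref{palh}. On the right, $\phi_\varepsilon \to \phi$ pointwise with $|\phi_\varepsilon| \leq |\phi|$, and by \eqref{sups} and \eqref{defov},
\[
0 \leq H(x, u_0) \leq (2b)^{(n+2\sigma)/(n-2\sigma)}\, w^{(n+2\sigma)/(n-2\sigma)} + \overline{M} \;\in\; L^1(\R^n),
\]
the integrability of $\overline{M}$ following from Lemma~\ref{sim} and \eqref{3seq1}, which give $\sum_i M_i \rho_i^n \lesssim \sum_i M_i^{1-n/(2\sigma)} 2^{-in/(2\sigma)} < \infty$ since $n > 2\sigma$ and $M_i > 9^i$; dominated convergence therefore applies. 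On the left, writing $\phi - \phi_\varepsilon = \phi \chi_\varepsilon$, the problem reduces to showing $\int u_0\, (-\Delta)^\sigma(\phi \chi_\varepsilon)\, dx \to 0$, which I would do by splitting at $|x| = 4\varepsilon$. For $|x| > 4\varepsilon$ the function $\phi \chi_\varepsilon$ vanishes near $x$, so $|(-\Delta)^\sigma(\phi\chi_\varepsilon)(x)| \leq C\|\phi\|_\infty \int_{B_{2\varepsilon}} |x-y|^{-n-2\sigma}\,dy \leq C\varepsilon^n |x|^{-n-2\sigma}$, and the corresponding contribution is at most $C \|u_0\|_\infty \varepsilon^n \int_{|x|>4\varepsilon} |x|^{-n-2\sigma}\,dx = O(\varepsilon^{n-2\sigma})$. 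For $|x| \leq 4\varepsilon$, writing $(-\Delta)^\sigma(\phi\chi_\varepsilon)$ in symmetric form and splitting the $y$-integral at $|y| = \varepsilon$, the bounds $\|D^2(\phi\chi_\varepsilon)\|_\infty \leq C\varepsilon^{-2}$ (inner) and $\|\phi\chi_\varepsilon\|_\infty \leq C$ (outer) give $\|(-\Delta)^\sigma(\phi\chi_\varepsilon)\|_{L^\infty(B_{4\varepsilon})} \leq C\varepsilon^{-2\sigma}$, and the contribution is likewise $O(\varepsilon^{n-2\sigma})$. Since $n > 2\sigma + 3 > 2\sigma$, both pieces vanish.

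The main obstacle is the removability of the singular point $\{0\}$: propagating the pointwise equation across the origin in the distributional sense, which reduces precisely to the defect bound $\int u_0\, (-\Delta)^\sigma(\phi\chi_\varepsilon)\, dx \to 0$. The sharp ingredient is the rate $\|(-\Delta)^\sigma(\phi\chi_\varepsilon)\|_{L^\infty(B_{4\varepsilon})} = O(\varepsilon^{-2\sigma})$, which reflects the $2\sigma$-homogeneity of $(-\Delta)^\sigma$ and is obtained by optimally balancing the local second-difference contribution against the far-field tail inside the singular integral.
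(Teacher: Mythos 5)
Your proposal is correct and follows essentially the same route as the paper: cut off the test function near the origin, use the pointwise equation and integration by parts away from $\{0\}$, and control the cutoff error by splitting at $|x| = 4\varepsilon$ with exactly the bounds $\varepsilon^{-2\sigma}$ inside and $\varepsilon^n |x|^{-n-2\sigma}$ outside, yielding the same $O(\varepsilon^{n-2\sigma})$ defect. The only difference is that you spell out the Fubini justification of the duality identity for test functions supported away from the origin and the $L^1$ bound on $\overline{M}$, both of which the paper leaves implicit.
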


\begin{proof} Let $\eta : \R^n \to [0, 1]$ be a $C^\infty$ cut-off function satisfying $\eta = 1$ in $\R^n \setminus B_2$ and  $\eta = 0$ in $B_1$.  Let $\eta_\varepsilon (x) : = \eta (\varepsilon^{- 1} x)$ for $\varepsilon \in (0, 1)$. For any $\phi \in C_c^\infty (\R^n)$, we use $\phi \eta_\varepsilon$ as a  test function.  From Lemma \ref{supm}, we know that 
\begin{equation}\label{ibpu0}
\int_{\R^n} (\phi \eta_\varepsilon) H(x, u_0) dx = \int_{\R^n} (\phi \eta_\varepsilon) (- \Delta)^\sigma u_0 dx = \int_{\R^n} u_0 (- \Delta)^\sigma (\phi \eta_\varepsilon) dx. 
\end{equation}
Denote 
\begin{equation}\label{Dtfx}
I_\varepsilon (x) : = (- \Delta)^\sigma (\phi \eta_\varepsilon) (x) - \eta _\varepsilon (x) (- \Delta)^\sigma \phi (x).
\end{equation}
When $|x| \leq 4 \varepsilon$,
$$
\aligned
|I_\varepsilon (x)| & \leq C + C \int_{|z| \leq 2 \varepsilon} \frac{ | 2 (\phi \eta_\varepsilon) (x) - (\phi \eta_\varepsilon) (x + z) - (\phi \eta_\varepsilon) (x - z) | }{ |z|^{n + 2 \sigma} } dz + C \int_{|z| \geq 2 \varepsilon} \frac{1}{ |z|^{n + 2 \sigma} } dz \\
& \leq C + C \varepsilon^{- 2} \int_{|z| \leq 2 \varepsilon} \frac{|z|^2}{ |z|^{n + 2 \sigma} } dz + C \varepsilon^{- 2 \sigma} \leq C \varepsilon^{- 2 \sigma}.
\endaligned
$$
When $|x| > 4 \varepsilon$, we have $\eta_\varepsilon (x) = 1$, then
$$
\aligned
|I_\varepsilon (x)| & = C \bigg| \int_{\R^n} \frac{ \phi (y) ( \eta_\varepsilon (x) - \eta_\varepsilon (y) ) }{ |x - y|^{n + 2 \sigma} } dy \bigg| \leq C \int_{|y| \leq 2 \varepsilon} \frac{1}{ |x - y|^{n + 2 \sigma} } dy \\
& \leq C \int_{|y| \leq 2 \varepsilon} \frac{1}{ |x|^{n + 2 \sigma} } dy \leq C \varepsilon^n |x|^{- n - 2 \sigma}.
\endaligned
$$
Thus
$$
\aligned
\bigg| \int_{\R^n} u_0 I_\varepsilon (x) dx \bigg| & \leq C \varepsilon^{- 2 \sigma} \int_{|x| \leq 4 \varepsilon} u_0 dx + C \varepsilon^n \int_{|x| > 4 \varepsilon} u_0 |x|^{- n - 2 \sigma} dx \\
& \leq C \varepsilon^{n - 2 \sigma} \to 0
\endaligned
$$
as $\varepsilon \to 0$. Since $u_0 \in L^\infty (\R^n)$ and $H(x, u_0) \in L_{loc}^1 (\R^n)$, by sending $\varepsilon \to 0$ in \eqref{ibpu0} we obtain  that $u_0$ is a distributional solution in $\R^n$.  
\end{proof}

{\it Step 5. Define the functions $u$ and $K$.} Define $\overline{H} : \R^n \times [0, \infty) \to (0, \infty)$ by $\overline{H} (x, v) = F( \widetilde{u} (x), k(x), p(x, v) )$. Then $\underline{H} \leq H \leq \overline{H}$ since $\kappa \leq k$. In particular,
\begin{equation}\label{uHH}
\underline{H} ( x, u_0 (x) ) \leq H( x, u_0(x) ) \leq \overline{H} ( x, u_0(x) ) ~~~~~~ \textmd{for} ~ x \in \R^n \setminus \{ 0 \}.
\end{equation}

Similar to the arguments in \cite[Step 4]{DYla},  we have for $x \in \R^n$ and $v \geq 0$,  
$$
\aligned
\overline{H} (x, v) & = f( \widetilde{u} (x), k(x), p(x, v) ) \\
& = k(x) \bigg( v + \sum\limits_{i = 1}^\infty u_i (x) \bigg)^\frac{n + 2 \sigma}{n - 2 \sigma} - \sum\limits_{i = 1}^\infty u_i (x)^\frac{n + 2 \sigma}{n - 2 \sigma}.
\endaligned
$$
Moreover, define 
\begin{equation}\label{Solu=0h87}
u : = u_0 + \sum\limits_{i = 1}^\infty u_i.  
\end{equation} 
Then $u \in L_\sigma (\R^n) \cap C^{2 \sigma + \gamma} (\R^n \setminus \{ 0 \})$ is a positive solution of   
\begin{equation}\label{defu}
\kappa (x) u^\frac{n + 2 \sigma}{n - 2 \sigma} \leq (- \Delta)^\sigma u \leq k(x)  u^\frac{n + 2 \sigma}{n - 2 \sigma} ~~~~~~ \textmd{in} ~ \R^n \setminus \{0\}.
\end{equation}
It follows from \eqref{uixi}, \eqref{sinu1} and \eqref{defu0} that $u$ satisfies \eqref{u-phi} and \eqref{u-inf}.

Define $K : \R^n \to (0, \infty)$ by
\begin{equation}\label{defK}
K(x) := \frac{(- \Delta)^\sigma u(x)}{ u(x)^\frac{n + 2 \sigma}{n - 2 \sigma} } ~~~~~~ \textmd{for} ~ x \in \R^n \setminus \{ 0 \}
\end{equation}
and $K(0) = 1$. Then
\begin{equation}\label{KxH}
K(x) = \frac{ H( x, u_0(x) ) + \sum_{i = 1}^\infty u_i (x)^\frac{n + 2 \sigma}{n - 2 \sigma} }{ \big( u_0 (x) + \sum_{i = 1}^\infty u_i (x) \big)^\frac{n + 2 \sigma}{n - 2 \sigma} } ~~~~~~ \textmd{for} ~ x \in \R^n \setminus \{ 0 \}
\end{equation}
and hence $K \in C^1 (\R^n \setminus \{ 0 \})$. It follows from \eqref{defu} and \eqref{defK} that
\begin{equation}\label{kKk}
\kappa (x) \leq K(x) \leq k(x) ~~~~~~ \textmd{for} ~ x \in \R^n \setminus \{ 0 \}.
\end{equation}
Recall that $\kappa$, $k \in C^1 (\R^n)$ and $\kappa (0) = K(0) = k(0) = 1$, we get $K \in C (\R^n)$,
\begin{equation}\label{nablakKk}
\nabla \kappa (0) = \nabla K(0) = \nabla k(0) = 0
\end{equation}
and
\begin{equation}\label{keqKeqk}
\kappa (x) = K(x) = k(x) ~~~~~~ \textmd{for} ~ |x| \geq 2 \delta_1.
\end{equation}

\vskip0.10in

{\it Step 6. Show that $K \in C^1 (\R^n)$.}  We only need to show that
\begin{equation}\label{limnablaK}
\lim\limits_{|x| \to 0^+} \nabla K(x) = 0.
\end{equation}
Let $S = \{ x \in \R^n \setminus \{ 0 \} ~ : ~ \underline{H} ( x, u_0 (x) ) < H( x,u_0 (x) ) \}$.

It follows from \eqref{underH} and \eqref{KxH} that
\begin{equation}\label{defS}
S = \{ x \in \R^n \setminus \{ 0 \} ~ : ~ \kappa (x) < K(x) \}. 
\end{equation}
By \eqref{kKk}, \eqref{nablakKk} and \eqref{defS} we obtain 
\begin{equation}\label{nkaeqnK}
\nabla \kappa (x) = \nabla K(x) ~~~~~~ \textmd{for} ~ x \in \R^n \setminus S
\end{equation}
and thus \eqref{limnablaK} holds for $x \in (\R^n \setminus \{ 0 \}) - S$. Next we show that \eqref{limnablaK} holds for $x \in S$. It follows from \eqref{underH} and \eqref{defH} that
\begin{equation}\label{SHMUZ}
\left\{
\aligned
& H( x, u_0(x) ) = M( \kappa (x), p_0 (x) ) \\
& \widetilde{u} (x) > Z( \kappa (x), p_0 (x) )
\endaligned
\right. ~~~~~~ \textmd{for} ~ x \in S, 
\end{equation}
where $p_0 (x) := p( x, u_0 (x) )$. Since $\kappa \geq k_j$ in $B_{2 \rho_j} (x_j)$, by \eqref{defMZ} we have 
\begin{equation}\label{UxZkjMj}
\widetilde{u} (x) > Z( k_j, p_0 (x) ) = M_j^\frac{n - 2 \sigma}{4 \sigma} p_0 (x) ~~~~~~ \textmd{for} ~ x \in S \cap B_{2 \rho_j} (x_j), ~ j \geq 1.
\end{equation}
For $x' \in (\R^n \setminus \{ 0 \}) - \bigcup\limits_{i = 1}^\infty B_{2 \rho_i} (x_i)$, $\kappa (x') = k (x')$. Hence, by \eqref{kKk} and \eqref{defS} we know that $x' \not\in S$. Consequently,
\begin{equation}\label{SsubBi}
S \subset \bigcup\limits_{i = 1}^\infty B_{2 \rho_i} (x_i).
\end{equation}

It follows from the same proofs as in \cite[(86) and (87)]{DYla} that
\begin{equation}\label{SB2SB}
S \cap B_{2 \rho_j} (x_j) = S \cap B_{\rho_j} (x_j) ~~~~~~ \textmd{for}~ j \geq 1
\end{equation} 
and
\begin{equation}\label{ujMjj1}
u_j \geq C M_j^\frac{ (1 - \beta) (n - 2 \sigma) }{4 \sigma} ~~~~~~ \textmd{in} ~ S \cap B_{2 \rho_j} (x_j) , ~ j \geq 1.
\end{equation}

Recall \eqref{KxH} and \eqref{SHMUZ}, we have 
$$
K(x) = \frac{ M_j p_0 (x)^\frac{n + 2 \sigma}{n - 2 \sigma} + \widetilde{u} (x)^\frac{n + 2 \sigma}{n - 2 \sigma} }{ \big( p_0 (x) + \widetilde{u} (x) \big)^\frac{n + 2 \sigma}{n - 2 \sigma} } = \frac{ M_j \Big( \frac{p_0 (x)}{\widetilde{u} (x)} \Big)^\frac{n + 2 \sigma}{n - 2 \sigma} + 1 }{ \Big( \frac{p_0 (x)}{\widetilde{u} (x)} + 1 \Big)^\frac{n + 2 \sigma}{n - 2 \sigma} } ~~~~~~ \textmd{for} ~ x \in S \cap B_{\rho_j} (x_j) , ~ j \geq 1.
$$
Thus
$$
\nabla K = \frac{n + 2 \sigma}{n - 2 \sigma} \Bigg( \frac{ M_j \big( \frac{p_0}{ \widetilde{u} } \big)^\frac{4 \sigma}{n - 2 \sigma} - 1 }{ \big( \frac{p_0}{\widetilde{u}} + 1 \big)^\frac{2 n}{n - 2 \sigma}} \Bigg) \bigg( \nabla \frac{p_0}{\widetilde{u} (x)} \bigg) ~~~~~~ \textmd{in} ~ S \cap B_{\rho_j} (x_j), ~ j \geq 1.
$$
and hence, by \eqref{UxZkjMj},
\begin{equation}\label{nablaK3t}
\aligned
|\nabla K| & \leq \frac{n + 2 \sigma}{n - 2 \sigma} \Big| \nabla \frac{p_0}{ \widetilde{u} } \Big| \\
& \leq \frac{n + 2 \sigma}{n - 2 \sigma} \Bigg[ \Big| \nabla \frac{u_0}{ \widetilde{u} } \Big| + \bigg| \nabla \frac{\sum_{i = 1, i \neq j}^\infty u_i}{ \widetilde{u} } \bigg| + \Big| \nabla\frac{u_j}{ \widetilde{u} } \Big| \Bigg] ~~~~~~ \textmd{in} ~ S \cap B_{\rho_j} (x_j), ~ j \geq 1.
\endaligned
\end{equation}

Using the same algebraic calculations as in \cite[Step 5]{DYla},  we obtain for $j \geq 1$ and  $x \in S \cap B_{\rho_j} (x_j)$, 
\begin{equation}\label{balabala} 
|\nabla K| \leq  C \Bigg( \frac{ |\nabla u_0| }{ M_j^\frac{(1 - \beta) (n - 2 \sigma)}{4 \sigma} } + \frac{| \nabla u_j |}{u_j^2} + \frac{ M_j^\frac{1}{2 \sigma} }{ M_j^\frac{(1 - \beta) (n - 2 \sigma)}{4 \sigma} } \Bigg).
\end{equation}

We now estimate the first term in \eqref{balabala}. By Lemma \ref{disu0} and \cite[Proposition 2.22]{Si-07},  there exists a function $h \in C(B_2) \cap L_\sigma (\R^n) $ satisfying  $(- \Delta)^\sigma h = 0$ in $B_2$ such that   
$$
u_0 (x) = r_{n, \sigma} \int_{B_4} \frac{H( y, u_0 (y) )}{ |x - y|^{n - 2 \sigma} } dy + h(x) ~~~~~~ \textmd{for} ~ 0 < |x| \leq 2.
$$
By \eqref{Hxv}, \eqref{Hxv2} and \eqref{defu0}, 
$$
H( x, u_0 (x) ) \leq 
\begin{cases}
( 2 w(0) )^\frac{n + 2 \sigma}{n - 2 \sigma} M_j ~~~~~~  & \textmd{in} ~ B_{\rho_j} (x_j), ~ j \geq 1, \\
( 2 w(0) )^\frac{n + 2 \sigma}{n - 2 \sigma} b & \textmd{in} ~ (\R^n \setminus \{0\}) - \bigcup\limits_{i = 1}^\infty B_{\rho_i} (x_i).
\end{cases}
$$
By the regularity of $\sigma$- harmonic functions (see, e.g., \cite{Si-07}), we know that $|\nabla h(x)| < C$ for $|x| \leq 1$.  Hence, for $x \in B_{\rho_j} (x_j)$,
$$
\aligned
|\nabla u_0 (x)| \leq C \int_{B_4} \frac{H( y, u_0 (y) )}{ |x - y|^{n - 2 \sigma + 1} } dy + C 
\leq C [I_1 (x) + I_2 (x) + I_3 (x)] + C, 
\endaligned
$$
where
$$
I_1 (x) : = \int_{B_{\rho_j} (x_j)} \frac{M_j}{ |x - y|^{n - 2 \sigma +1} } dy \leq C M_j \rho_j^{2 \sigma - 1} \leq C M_j^\frac{1}{2 \sigma} ~~~~~~ \textmd{for} ~ x \in B_{\rho_j} (x_j)
$$
by Lemma \ref{sim}, and
$$
\aligned
I_2 (x) & : = \sum\limits_{i = 1, i \neq j}^\infty \int_{B_{\rho_i} (x_i)} \frac{M_i}{ |x - y|^{n - 2 \sigma + 1} } dy \leq C \sum\limits_{i = 1, i \neq j}^\infty \frac{M_i \rho_i^n}{ \textmd{dist} ( B_{\rho_i} (x_i), B_{\rho_j} (x_j) )^{n - 2 \sigma + 1} } \\
& \leq C \sum\limits_{i = 1, i \neq j}^\infty \frac{ \rho_i^{n - 2 \sigma} }{ 2^i (\rho_i +\rho_j)^{n - 2 \sigma + 1} } \leq \frac{C}{\rho_j} \sim C 2^\frac{j}{2 \sigma} M_j^\frac{1}{2 \sigma} \leq C M_j^\frac{1+\beta}{2 \sigma} ~~~~~~ \textmd{for} ~ x \in B_{\rho_j} (x_j)
\endaligned
$$
by Lemma \ref{sim}, \eqref{distB} and \eqref{3seq2}, and 
$$
I_3 (x) : = \int_{ B_4 - \bigcup\limits_{i = 1}^\infty B_{\rho_i} (x_i) } \frac{1}{ |x - y|^{n - 2 \sigma + 1} } dy \leq C ~~~~~~ \textmd{for} ~ x \in B_{\rho_j} (x_j).
$$
Thus
\begin{equation}\label{nablau0Mj}
|\nabla u_0| \leq C M_j^\frac{1 + \beta}{2 \sigma} ~~~~~~ \textmd{in} ~ B_{\rho_j} (x_j), ~ j \geq 1.
\end{equation}
Since $n > 2 \sigma + 3$, it follows from \eqref{nablau0Mj} that
\begin{equation}\label{estu0Mj}
\frac{|\nabla u_0|}{ M_j^\frac{(1 - \beta) (n - 2 \sigma)}{4 \sigma} } \leq \frac{ C M_j^\frac{1 + \beta}{2 \sigma} }{ M_j^\frac{3 (1 - \beta)}{4 \sigma} } \leq \frac{C}{ M_j^\frac{1 - 5 \beta}{4 \sigma} }.
\end{equation}

Finally, we estimate the second term in \eqref{balabala}. Let
$$
s_j : = \inf \{ s > 0 ~ : ~  S \cap B_{\rho_j} (x_j) \subset B_s (x_j) \}
$$
and $\widetilde{u}_j (s) : = \psi (s, \lambda_j)$. Then $s_j \leq \rho_j$ and $\widetilde{u}_j (s) = u_j (x)$ when $|x - x_j| = s$. By \eqref{ujMjj1} we have
$$
\widetilde{u}_j (s) \geq C M_j^\frac{(1 - \beta) (n - 2 \sigma)}{4 \sigma} ~~~~~~ \textmd{for} ~ 0 \leq s \leq s_j, ~ j \geq 1.   
$$
It follows from Lemma \ref{sim} that
$$
\bigg( \frac{\lambda_j}{\lambda_j^2 + s_j^2} \bigg)^{2 \sigma} \geq C M_j^{1 - \beta} \geq C \bigg( \frac{ \varepsilon_j^\frac{2 \sigma}{n - 2 \sigma} }{2^j \lambda_j^\sigma} \bigg)^{1 - \beta}
$$
and hence, by \eqref{3seq2}, we have for $j \geq 1$,
$$
\aligned
s_j & \leq C \Bigg( \frac{2^j}{ \varepsilon_j^\frac{2 \sigma}{n - 2 \sigma} } \Bigg)^\frac{1 - \beta}{4 \sigma} \lambda_j^\frac{3 - \beta}{4} \leq C \big( \lambda_j^{- \beta} \big)^\frac{1 - \beta}{4 \sigma} \lambda_j^\frac{3 - \beta}{4} \leq C \big( \lambda_j^{- \beta} \big)^\frac{1}{4 \sigma} \lambda_j^\frac{3 - \beta}{4} \leq C \lambda_j^\frac{3 \sigma - \beta (\sigma + 1)}{4 \sigma}.
\endaligned
$$
Since $n > 2 \sigma + 3$, we have for $0 \leq s \leq s_j$ and $j \geq 1$ that
\begin{equation}\label{esttildeuj1}
\frac{- \widetilde{u}_j '(s)}{\widetilde{u}_j (s)^2} \leq C \lambda_j^\frac{\sigma (n - 2 \sigma - 3) - \beta (\sigma + 1)(n - 2 \sigma - 1)}{4 \sigma}.
\end{equation}

Since $n > 2 \sigma + 3$, we can take 
$$
\beta = \frac{(n - 2 \sigma - 3) \sigma}{3 (\sigma + 1)(n - 2 \sigma -1)} >0. 
$$
By \eqref{balabala}-\eqref{esttildeuj1} we get 
\begin{equation}\label{nablaKMl}
|\nabla K| \leq C \Bigg( \frac{1}{ M_j^\frac{3 (n - 2 \sigma - 1) - 2 \sigma (n - 2 \sigma - 6)}{12 \sigma (\sigma + 1) (n - 2 \sigma - 1)} } + \lambda_j^\frac{n - 2 \sigma - 3}{6} \Bigg) ~~~~~~ \textmd{in} ~ S \cap B_{\rho_j} (x_j), ~ j \geq 1.
\end{equation}
Hence, it follows from Lemma \ref{sim}, \eqref{3seq2}, \eqref{SsubBi} and \eqref{SB2SB} that \eqref{limnablaK} also holds for $x \in S$. Thus we have  $K \in C^1 (\R^n)$.

By sufficiently increasing $k_i$ for each $i \geq 1$, we can force $\kappa$ to satisfy
\begin{equation}\label{kkappaC1}
\| k - \kappa \|_{C^1 (\R^n)} < \frac{\varepsilon}{4}
\end{equation}
by \eqref{1kir}, \eqref{defka}, \eqref{blaka}, \eqref{KxH} and \eqref{keqKeqk}. We also can force $K$ to satisfy
$$
|\nabla (K - \kappa)| = |\nabla ( K - (\kappa - k) )| \leq |\nabla K| + \frac{\varepsilon}{4} \leq \frac{\varepsilon}{2} ~~~~~~ \textmd{in} ~ S
$$
by \eqref{SsubBi}, \eqref{SB2SB} and \eqref{nablaKMl}. Thus by \eqref{nkaeqnK}, $|\nabla (K - \kappa)| \leq \varepsilon/2$ in $\R^n$. It follows from \eqref{kKk} and \eqref{kkappaC1} that $K$ satisfies \eqref{K-k}.

\vskip0.1in

{\it Step 7. Extend the solution $u$ to the half space.}  By Step 5, the positive function $u$ in \eqref{Solu=0h87} belongs to $L_\sigma (\R^n) \cap C^{2 \sigma + \gamma} (\R^n \setminus \{ 0 \})$ and satisfies  
$$
(- \Delta)^\sigma u = K(x) u^\frac{n + 2 \sigma}{n - 2 \sigma} ~~~~~~ \textmd{in} ~ \R^n \setminus \{ 0 \}, 
$$  
where $K(x)$ is defined in \eqref{defK} which is in $C^1(\mathbb{R}^n)$ according to Step 6.  Moreover, it follows from \eqref{uixi}, \eqref{sinu1} and \eqref{defu0} that $u$ satisfies \eqref{u-phi} and \eqref{u-inf}. 

Set 
$$
\aligned
U (y, t) := \mathcal{P}_\sigma [u] (y, t) & = \gamma_{n, \sigma} \int_{\R^n} \frac{ t^{2 \sigma}  }{ (|y - x|^2 + t^2)^{(n + 2 \sigma)/2} } u(x) dx \\
& = \gamma_{n, \sigma} \int_{\R^n} \left( \frac{1}{1 + |z|^2} \right)^\frac{n + 2 \sigma}{2} u(y - t z) dz. 
\endaligned
$$
 Then $U(y, t)$ is well-defined for any $(y, t) \in \overline{\R_+^{n + 1}} \setminus \{ 0 \}$ and $U \in C^2 (\R_+^{n + 1})$. For any $R > r > 0$, let $\tau$ be a $C^\infty$ cut-off function on $\R^n$ such that ${\rm supp} ~ \tau \subset B_{r/2}$ and $\tau = 1$ in $B_{r/4}$. Rewrite $U = \mathcal{P}_\sigma [(1 - \tau) u] + \mathcal{P}_\sigma [\tau u]$. Since both $\mathcal{P}_\sigma [(1 - \tau) u]$ and $\mathcal{P}_\sigma [\tau u]$ belong to $W^{1, 2} (t^{1 - 2 \sigma}, \mathcal{B}_R^+ \setminus \overline{\mathcal{B}_r^+})$,  we have $U \in W^{1, 2} (t^{1 - 2 \sigma}, \mathcal{B}_R^+ \setminus \overline{\mathcal{B}_r^+})$.  By the extension formulation in \cite{CS-07},  we have for any $\Psi \in C_c^\infty (\overline{\R_+^{n + 1}} \setminus \{ 0 \})$,
$$
\aligned
\int_{\R_+^{n + 1}} t^{1 - 2 \sigma} \nabla U \nabla \Psi & = \int_{{\rm supp}\, \Psi} t^{1 - 2 \sigma} \nabla U \nabla \Psi = \int_{{\rm supp}\, \Psi \cap \R^n} \Psi \frac{\partial U}{\partial \nu^\sigma} \\
& = \int_{{\rm supp}\, \Psi \cap \R^n} \Psi K U^\frac{n + 2 \sigma}{n - 2 \sigma} = \int_{\R^n} \Psi K U^\frac{n + 2 \sigma}{n - 2 \sigma}.
\endaligned
$$
It follows that $U$ is a positive weak solution of \eqref{UKRN}. The proof of Theorem \ref{Lar}  is completed.  
\hfill$\square$

\vskip0.40in

\noindent {X. Du and H. Yang}\\
Department of Mathematics, The Hong Kong University of Science and Technology \\
Clear Water Bay, Kowloon, Hong Kong, China \\
E-mail addresses: xduah@connect.ust.hk (X. Du) ~~~~~~ mahuiyang@ust.hk (H. Yang)

\end{document}